\newcommand{\strsub}{\prec}
\renewcommand{\emptyset}{\varnothing}
\newcommand{\C}{{\mathcal C}}
\newcommand{\B}{{\mathcal B}}
\newcommand{\R}{{\mathcal R}}
\renewcommand{\O}{{\mathcal O}}
\newcommand{\I}{{\mathcal I}}
\newcommand{\restrict}{\upharpoonright}
\newcommand{\<}{\langle}
\renewcommand{\>}{\rangle}
\newcommand{\st}{\mid}
\newcommand{\ot}{\mathop{\rm ot}\nolimits}
\newcommand{\cf}{\mathop{\rm cf}}
\newcommand{\NS}{{\mathop{\rm NS}}}
\newcommand{\NSS}{{\mathop{\rm NSS}}}
\renewcommand{\and}{\mathop{\&}}
\newtheorem{theorem}{Theorem}[section]
\newtheorem{lemma}[theorem]{Lemma}
\newtheorem{corollary}[theorem]{Corollary}
\newtheorem{proposition}[theorem]{Proposition}
\newtheorem{claim}[theorem]{Claim}
\newtheorem{fact}[theorem]{Fact}
\theoremstyle{definition}
\newtheorem{question}[theorem]{Question}
\newtheorem{remark}[theorem]{Remark}
\newtheorem{definition}[theorem]{Definition}
\date{\today}
\begin{document}

\title{Two-cardinal derived topologies, indescribability and Ramseyness}

\author[Brent Cody]{Brent Cody}
\address[Brent Cody]{ 
Virginia Commonwealth University,
Department of Mathematics and Applied Mathematics,
1015 Floyd Avenue, PO Box 842014, Richmond, Virginia 23284, United States
} 
\email[B. ~Cody]{bmcody@vcu.edu} 
\urladdr{http://www.people.vcu.edu/~bmcody/}

\author[Chris Lambie-Hanson]{Chris Lambie-Hanson}
\address[Chris Lambie-Hanson]{ 
Institute of Mathematics,
Czech Academy of Sciences,
\v{Z}itn\'{a} 25,   115 67   Praha 1,   Czech Republic
} 
\email[C. ~Lambie-Hanson]{lambiehanson@math.cas.cz} 
\urladdr{https://users.math.cas.cz/~lambiehanson/}

\author[Jing Zhang]{Jing Zhang}
\address[Jing Zhang]{ 
Department of Mathematics, University of Toronto
Bahen Centre, Room 6290
40 St. George St., Toronto, ON, M5S 2E4
} 
\email[J. ~Zhang]{jingzhan@alumni.cmu.edu} 
\urladdr{https://jingjzzhang.github.io/}

\thanks{The second author was supported by GA\v{C}R project 23-04683S 
and the Czech Academy of Sciences (RVO 67985840). A portion of this work was carried out while all three authors 
were participating in the Thematic Program on Set Theoretic Methods in 
Algebra, Dynamics and Geometry at the Fields Institute in spring of 2023. 
We thank the Fields Institute for their support and hospitality.}

\begin{abstract}
We introduce a natural two-cardinal version of Bagaria's sequence of derived topologies on ordinals. We prove that for our sequence of two-cardinal derived topologies, limit points of sets can be characterized in terms of a new iterated form of pairwise simultaneous reflection of certain kinds of stationary sets, the first few instances of which are often equivalent to notions related to strong stationarity, which has been studied previously in the context of strongly normal ideals \cite{MR1074449}. The non-discreteness of these two-cardinal derived topologies can be obtained from certain two-cardinal indescribability hypotheses, which follow from local instances of supercompactness. Additionally, we answer several questions posed by the first author, Peter Holy and Philip White on the relationship between Ramseyness and indescribability in both the cardinal context and in the two-cardinal context.
\end{abstract}

\subjclass[2010]{Primary 03E55; Secondary 54G12, 03E02, 03E05}

\keywords{derived topology, stationary reflection, indescribable cardinals, Ramsey cardinals, Ramsey hierarchy}

\maketitle



%

\section{Introduction}

The \emph{derived set} of a subset $A$ of a topological space $(X,\tau)$ is the collection $d(A)$ of all limit points of $A$ in the space. We refer to the function $d$ as the Cantor derivative of the space $(X,\tau)$. Recently, Bagaria showed \cite{MR3894041} that the \emph{derived topologies on ordinals}, whose definition we review now, are closely related to certain widely studied stationary reflection properties and large cardinal notions. Suppose $\delta$ is an ordinal and $\tau_0$ is the order topology on $\delta$. That is, $\tau_0$ is the topology on $\delta$ generated by $B_0=\{\{0\}\}\cup\{(\alpha,\beta)\st\alpha<\beta<\delta\}$. For a set $A\subseteq\delta$, it easily follows that the collection $d_0(A)$ of all limit points of $A$ in the space $(\delta,\tau_0)$, is equal to $\{\alpha<\delta\st \text{$A$ is unbounded in $\alpha$}\}$. Beginning with the interval topology on $\delta$ and declaring more and more derived sets to be open, Bagaria \cite{MR3894041} introduced the sequence of derived topologies $\<\tau_\xi\st\xi<\delta\>$ on $\delta$. For example, $\tau_1$ is the topology on $\delta$ generated by $B_1=B_0\cup\{d_0(A)\st A\subseteq\delta\}$, and $\tau_2$ is the topology on $\delta$ generated by $B_2=B_1\cup\{d_1(A)\st A\subseteq\delta\}$ where $d_1$ is the Cantor derivative of the space $(\delta,\tau_1)$. Bagaria showed that limit points of sets in the spaces $(\delta,\tau_\xi)$, for $\xi\in\{1,2\}$, can be characterized as follows. For $A\subseteq\delta$ and $\alpha<\delta$: $\alpha$ is a limit point of $A$ in $(\delta,\tau_1)$ if and only if $A$ is stationary in $\alpha$, and $\alpha$ is a limit point of $A$ in $(\delta,\tau_2)$ if and only if whenever $S$ and $T$ are stationary subsets of $\alpha$ there is a $\beta\in A$ such that $S\cap\beta$ and $T\cap\beta$ are stationary subsets of $\beta$. Furthermore, Bagaria proved that limit points of sets in the spaces $(\delta,\tau_\xi)$ for $\xi>2$ can be characterized in terms of an iterated form of pairwise simultaneous stationary reflection called $\xi$-s-stationarity.



In this article we address the following natural question: is there some analogue of the sequence of derived topologies on an ordinal in the two-cardinal setting? Specifically, suppose $\kappa$ is a cardinal and $X$ is a set of ordinals with $\kappa\subseteq X$. Is there a topology $\tau_\xi$ on $P_\kappa X$ such that, for all $A \subseteq 
P_\kappa X$, the limit points of $A$ in the space $(P_\kappa X,\tau_\xi)$ are precisely the points $x\in P_\kappa X$ such that the set $A$ satisfies: 
\begin{itemize}
\item some \emph{unboundedness} condition at $x$? 
\item some \emph{stationarity} condition at $x$? 
\item some \emph{pairwise simultaneous stationary reflection-like} condition at $x$?
\end{itemize}


Recall that for $x,y\in P_\kappa X$ we say that $x$ is a \emph{strong subset} of $y$ and write $x\strsub y$ if $x\subseteq y$ and $|x|<|y\cap\kappa|$. Let us note that the ordering $\strsub$, and its variants, are used in the context of supercompact Prikry forcings \cite{MR2768695}. In Section \ref{section_generalizing_the_order_topology}, we show that the ordering $\strsub$ induces a natural topology $\tau_0$ on $P_\kappa X$ analogous to the order topology on an ordinal $\delta$. Furthermore, beginning with $\tau_0$ and following the constructions of \cite{MR3894041}, in Section \ref{section_derived_topologies} we define a sequence of \emph{derived topologies} $\<\tau_\xi\st\xi<\kappa\>$ on $P_\kappa X$. Let us note that after submitting the current article, the authors learned that Catalina Torres, working under the supervision of Joan Bagaria, simultaneously and independently defined a sequence of two-cardinal derived topologies and obtained results similar to those in Sections \ref{section_derived_topologies} - \ref{section_variations} involving the relationship between various two-cardinal notions of $\xi$-s-stationarity and two-cardinal derived topologies. 

We show (see Propositions \ref{proposition_strong_vs_1_s_stationarity} and \ref{proposition_limit_points_tau_1}) that in the space $(P_\kappa X,\tau_1)$, for $x\in P_\kappa X$ with $x\cap\kappa$ an inaccessible cardinal, $x$ is a limit point of a set $A\subseteq P_\kappa X$ if and only if $A$ is \emph{strongly stationary in $P_{x\cap\kappa}x$} (see Section \ref{section_strong_stationarity} for the definition of strongly stationary set). Let us note that although the notion of strong stationarity is distinct from the widely popular notion of two-cardinal stationarity introduced by Jech \cite{MR0325397} (see \cite[Lemma 2.2]{MR4082998}), it has previously been studied by several authors \cite{MR1074449, MR4082998, MR0954259, MR2204569, MR2440418}. The analogy with the case of derived topologies on ordinals continues: in the space $(P_\kappa X,\tau_2)$, when $x\in P_\kappa X$ is such that $x\cap\kappa<\kappa$ and $P_{x\cap\kappa}x$ satisfies a two-cardinal version of $\Pi^1_1$-indescribability, $x$ is a limit point of a set $A\subseteq P_\kappa X$ if and only if for every pair $S,T$ of strongly stationary subsets of $P_{\kappa\cap x}x$ there is a $y\strsub x$ in $A$ with $y\cap\kappa<x\cap\kappa$ such that $S$ and $T$ are both strongly stationary in $P_{y\cap\kappa}y$ (see Proposition \ref{proposition_indescribable_reflection}). Additionally, using a different method, we show (see Corollary \ref{corollary_stationary}) that if $\kappa$ is weakly inaccessible and $X$ is a set of ordinals with $\kappa\subseteq X$, then there is a topology on $P_\kappa X$ such that for $A\subseteq P_\kappa X$, $x\in P_\kappa X$ is a limit point of $A$ if and only if $\kappa_x$ is weakly inaccessible and $A$ is stationary in $P_{\kappa_x}x$ in the sense of Jech \cite{MR0325397}. 

In order to prove the characterizations of limit points of sets in the spaces $(P_\kappa X,\tau_\xi)$ (Theorem \ref{theorem_induction}(1)), we introduce new iterated forms of two-cardinal stationarity and two-cardinal pairwise simultaneous stationary reflection, which we refer to as $\xi$-strong stationarity and $\xi$-s-strong stationarity (see Definition \ref{definition_xi_s_stationary}). Let us note that the notions of $\xi$-strong stationarity and $\xi$-s-strong stationarity introduced here are natural generalizations of notions previously studied in the cardinal context by Bagaria, Magidor and Sakai \cite{MR3416912}, Bagaria \cite{MR3894041} and by Brickhill and Welch \cite{MR4583072}, as well as those previously studied in the two-cardinal context by Sakai \cite{Sakai}, by Torres \cite{torres_ms_thesis}, as well as by Benhamou and the third author \cite{BZ}.

We establish some basic properties of the ideals associated to $\xi$-strong stationarity and $\xi$-s-strong stationarity and introduce notions of $\xi$-weak club and $\xi$-s-weak club which provide natural filter bases for the corresponding ideals (see Corollary \ref{corollary_xi_plus_1_s_filter_base}). The consistency of the non-discreteness of the derived topologies $\tau_\xi$ on $P_\kappa X$ is obtained using various two-cardinal indescribability hypotheses, all of which follow from appropriate local instances of supercompactness (see Section \ref{section_indescribability}). We also show that 
by restricting our attention to a certain natural club subset of 
$P_\kappa X$, some questions about the resulting spaces, such as questions regarding when particular subbases are in fact bases, become 
more tractable (see Section \ref{section_variation}).

Additionally, in Section \ref{section_ramsey}, we answer several questions asked by the first author and Peter Holy \cite{MR4594301} and the first author and Philip White \cite{CodyWhite} concerning the relationship between Ramseyness and indescribability. For example, answering \cite[Question 10.9]{MR4594301} in the affirmative, we show that the existence of a $2$-Ramsey cardinal is strictly stronger in consistency strength than the existence of a $1$-$\Pi^1_1$-Ramsey cardinal. In other words, the existence of an uncountable cardinal $\kappa$ such that for every regressive function $f:[\kappa]^{<\omega}\to\kappa$ there is a set $H\subseteq\kappa$ which is positive for the Ramsey ideal homogeneous for $f$, is strictly stronger in consistency strength than the existence of a cardinal $\kappa$ such that for every regressive function $f:[\kappa]^{<\omega}\to\kappa$ there is a set $H\subseteq\kappa$ that is positive for the $\Pi^1_1$-indescribability ideal and homogeneous for $f$.



\section{Strong stationarity and weak clubs}\label{section_strong_stationarity}

Suppose $\kappa$ is a cardinal and $X$ is a set of ordinals with $\kappa\subseteq X$. Given $x \in P_\kappa X$, we denote $|x \cap \kappa|$ 
by $\kappa_x$. We define an ordering $\strsub $ on $P_\kappa X$ by letting $x\strsub y$ if and only if $x\subseteq y$ and $|x|< \kappa_y$. An ideal $I$ on $P_\kappa X$ is \emph{strongly normal} if whenever $S\in I^+$ and $f:S\to P_\kappa X$ is such that $f(x)\strsub x$ for all $x\in S$, then there is some $T\in P(S)\cap I^+$ such that $f\restrict T$ is constant. It is easy to see that an ideal $I$ is strongly normal if and only if the dual filter $I^*$ is closed under $\strsub$-diagonal intersections in the following sense: whenever $A_x\in I^*$ for all $x\in P_\kappa X$, the $\strsub$-diagonal intersection 
\[\bigtriangleup_\strsub\{A_x\st x\in P_\kappa X\}=\{y\in P_\kappa X\st y\in\bigcap_{x\strsub y}A_x\}\]
is in $I^*$. Carr, Levinski and Pelletier \cite{MR1074449} showed that there is a strongly normal ideal on $P_\kappa X$ if and only if $\kappa$ is a Mahlo cardinal or $\kappa=\mu^+$ for some cardinal $\mu$ with $\mu^{< \mu}=\mu$. Furthermore, they proved that when a strongly normal ideal exists on $P_\kappa X$, the minimal such ideal is that consisting of the non-strongly stationary subsets of $P_\kappa X$, which are defined as follows. Given a function $f:P_\kappa X\to P_\kappa X$ we let
\[B_f=\{x\in P_\kappa X\st x\cap\kappa\neq\emptyset\land f[P_{\kappa_x}x]\subseteq P(x)\}.\]
A set $S\subseteq P_\kappa X$ is \emph{strongly stationary in $P_\kappa X$} if for all $f:P_\kappa X\to P_\kappa X$ we have $S\cap B_f\neq\emptyset$. 
The non-strongly stationary ideal on $P_\kappa X$ is the collection
\[\NSS_{\kappa,X}=\{X\subseteq P_\kappa X\st \text{$X$ is not strongly stationary}\}.\]
Thus, when $\kappa$ is Mahlo or $\kappa=\mu^+$ where $\mu^{< \mu}=\mu$, the ideal $\NSS_{\kappa,X}$ is the minimal strongly normal ideal on $P_\kappa X$.

When $\kappa$ is Mahlo, we can identify a filter base for the filter dual to $\NSS_{\kappa,X}$ consisting of sets which are, in a sense, cofinal in $P_\kappa X$ and satisfy a certain natural closure property. We say that a set $C\subseteq P_\kappa X$ is \emph{$\strsub $-cofinal in $P_\kappa X$} if for all $x\in P_\kappa X$ there is a $y\in C$ such that $x\strsub y$. A set $C\subseteq P_\kappa X$ is said to be a \emph{weak club in $P_\kappa X$} if $C$ is $\strsub $-cofinal in $P_\kappa X$ and \emph{$\strsub$-closed in $P_\kappa X$}, meaning that for all $x\in P_\kappa X$, if $C$ is $\strsub $-cofinal in $P_{\kappa_x}x$ then $x\in C$. Given a function $f:P_\kappa X\to P_\kappa X$ let 
\[C_f=\{x\in P_\kappa X\st x\cap\kappa\neq\emptyset\land f[P_{\kappa_x}x]\subseteq P_{\kappa_x}x\}.\]

\begin{fact}
If $\kappa$ is a Mahlo cardinal, the sets
\[\mathcal{C}_0=\{B_f\st f:P_\kappa X\to P_\kappa X\},\]
\[\mathcal{C}_1=\{C_f\st f:P_\kappa X\to P_\kappa X\}\]
and
\[\mathcal{C}_2=\{C\subseteq P_\kappa X\st \text{$C$ is weak club in $P_\kappa X$}\}\]
generate the same filter on $P_\kappa X$, namely, the filter $\NSS_{\kappa,X}^*$ dual to the ideal $\NSS_{\kappa,X}$.
\end{fact}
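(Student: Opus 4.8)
The plan is to show all three collections generate the same filter, which is $\NSS_{\kappa,X}^*$, via a short cycle of inclusions whose only substantial link is one $P_\kappa X$-reflection argument. First, $\mathcal{C}_0$ generates $\NSS_{\kappa,X}^*$ essentially by definition: $\mathcal{C}_0$ is closed under finite intersections since $B_f\cap B_g=B_h$ for $h(z)=f(z)\cup g(z)$, and, unwinding the definition of strong stationarity, a set $T\subseteq P_\kappa X$ contains some $B_f$ if and only if $P_\kappa X\setminus T$ is not strongly stationary, i.e.\ iff $T\in\NSS_{\kappa,X}^*$; since $\kappa$ is Mahlo, Carr--Levinski--Pelletier guarantee $\NSS_{\kappa,X}$ is a proper ideal, so this filter is proper. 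The trivial inclusion $C_f\subseteq B_f$ (if $f(z)\in P_{\kappa_x}x$ then $f(z)\in P(x)$) shows the filter generated by $\mathcal{C}_0$ is contained in that generated by $\mathcal{C}_1$.

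The engine for the remaining inclusions is a coding lemma: for any weak club $C$, and for any set $C_f$, there is a function $g\colon P_\kappa X\to P_\kappa X$ with $B_g\subseteq C$ (resp. $B_g\subseteq C_f$); since $B_g\in\NSS_{\kappa,X}^*$, this places every weak club and every $C_f$ in $\NSS_{\kappa,X}^*$, so the filters generated by $\mathcal{C}_1$ and by $\mathcal{C}_2$ are each contained in $\NSS_{\kappa,X}^*$. For each $z$ I would put $\omega$ into $g(z)$, and when $z\cap\kappa\ne\emptyset$ also put into $g(z)$: (i) the witness to be reflected --- a point $y_z\in C$ with $z\strsub y_z$ (available by $\strsub$-cofinality of $C$) in the weak-club case, or $f(z)$ together with $|f(z)|$ in the $C_f$ case; and (ii) the ordinal $\sup(z\cap\kappa)$, the cardinal $|y_z|$ (in the weak-club case), and the least cardinal exceeding $\sup(z\cap\kappa)$. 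Then for $x\in B_g$ we have $\omega\subseteq x$ (as $\emptyset\in P_{\kappa_x}x$), so $\kappa_x\ge\omega$, and clause (ii) forces $x\cap\kappa$ to be transitive and a limit of cardinals, hence a limit cardinal, hence $x\cap\kappa=\kappa_x$. Consequently, for $x\in B_g$ and $z\in P_{\kappa_x}x$: in the weak-club case $y_z\in C$, $y_z\subseteq x$, and $|y_z|\in x\cap\kappa=\kappa_x$, so $y_z\in C\cap P_{\kappa_x}x$ with $z\strsub y_z$; thus $C$ is $\strsub$-cofinal in $P_{\kappa_x}x$ and $x\in C$ by $\strsub$-closedness; in the $C_f$ case $f(z)\subseteq x$ and $|f(z)|<\kappa_x$, so $f(z)\in P_{\kappa_x}x$ and $x\in C_f$.

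It remains to close the cycle by showing $\NSS_{\kappa,X}^*$ is contained in the filter generated by $\mathcal{C}_2$; for this I would prove that each $B_f$ is itself a weak club. Its $\strsub$-closedness is a one-line argument of the same shape: if $B_f$ is $\strsub$-cofinal in $P_{\kappa_x}x$, then for $z\in P_{\kappa_x}x$ choose $y\in B_f\cap P_{\kappa_x}x$ with $z\strsub y$, so $z\in P_{\kappa_y}y$, hence $f(z)\subseteq y\subseteq x$, hence $x\in B_f$. Its $\strsub$-cofinality is the heart of the matter and is where Mahloness enters: given $x_0$, use the Mahloness of $\kappa$ to find an elementary submodel $N\prec H_\theta$ (for large $\theta$) with $x_0,f,X,\kappa\in N$, $|N|<\kappa$, $N$ closed under ${<}\lambda$-sequences, and $N\cap\kappa=\lambda$ an inaccessible cardinal exceeding $|x_0|$; then $y:=N\cap X\in P_\kappa X$ has $\kappa_y=\lambda$ and $x_0\strsub y$, and $y\in B_f$ because any $z\in P_{\kappa_y}y=P_\lambda y$ lies in $N$ by ${<}\lambda$-closure, so $f(z)\in N$, so (using $|f(z)|\in N\cap\kappa=\lambda$) $f(z)\subseteq N\cap X=y$. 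The delicate point, which I expect to be the main obstacle, is producing $N$ with $N\cap\kappa$ inaccessible and the ${<}\lambda$-closure holding simultaneously; this is a standard but nonobvious consequence of Mahloness, in the spirit of the $P_\kappa\lambda$ arguments of Carr, Levinski and Pelletier.

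Putting these together: the filter of $\mathcal{C}_0$ is contained in that of $\mathcal{C}_1$, which therefore equals $\NSS_{\kappa,X}^*$ and so is contained in that of $\mathcal{C}_2$, which in turn is contained in that of $\mathcal{C}_0$; hence all three equal $\NSS_{\kappa,X}^*$. Finally, the degenerate points $x$ with $x\cap\kappa=\emptyset$ form a set in $\NSS_{\kappa,X}$ and cause no difficulty, under the convention on $\strsub$-cofinality of trivial structures already implicit in the definition of weak club.
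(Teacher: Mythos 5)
Your proposal is correct, but it is organized quite differently from the paper's proof, and it is worth recording the differences. The paper closes the circle of inclusions by showing (a) every weak club contains some $C_f$ and every $C_g$ is itself a weak club, and (b) $C_f\subseteq B_f$ together with $B_h\subseteq C_g$ for a suitable $h$ defined from $g$ using the cofinality of $C_g$. Mahloness enters only once, in proving that $C_g$ is $\strsub$-cofinal, and there it is used via a purely combinatorial device: a continuous $\strsub$-increasing chain $\<x_\eta\st\eta<\kappa\>$ with $\bigcup g[P_{\kappa_{x_\eta}}x_\eta]\strsub x_{\eta+1}$, evaluated at a \emph{regular} fixed point of $\eta\mapsto\kappa_{x_\eta}$. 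Your route instead proves a single "coding lemma" ($B_g\subseteq C$ and $B_g\subseteq C_f$ for suitable $g$), which needs no Mahloness at all, and concentrates all use of Mahloness in the claim that each $B_f$ is a weak club, proved with elementary submodels $N\prec H_\theta$ with $N\cap\kappa=\lambda$ inaccessible and ${}^{<\lambda}N\subseteq N$. Both work; your decomposition isolates the large-cardinal input more cleanly, while the paper's chain argument is more elementary --- it needs only a regular (not inaccessible) fixed point and avoids the closure-under-small-sequences issue entirely, which is exactly the "delicate point" you flag. (That point is genuinely resolvable: since Mahlo cardinals are strongly inaccessible, one can arrange $P_{N_\alpha\cap\kappa}(N_\alpha)\subseteq N_{\alpha+1}$ along a continuous chain and take an inaccessible fixed point; but note you could bypass it altogether by proving cofinality of $C_f$ as in the paper and then invoking $C_f\subseteq B_f$.)

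Two small points in the coding lemma should be tightened. First, "put the ordinal $\sup(z\cap\kappa)$ into $g(z)$" must mean including it \emph{as a subset} (equivalently, including every $\alpha\in z\cap\kappa$ as a subset of $g(z)$); if these ordinals are only included as elements, the conclusion that $x\cap\kappa$ is transitive fails, since for $\beta<\alpha\in x\cap\kappa$ one cannot form the test set $\{\beta\}\in P_{\kappa_x}x$ without already knowing $\beta\in x$. Second, you use $|y_z|\in x\cap\kappa=\kappa_x$ to conclude $|y_z|<\kappa_x$, which requires $x\cap\kappa$ to be a cardinal; this does follow once transitivity and "limit of cardinals" are secured as above, but the element-versus-subset bookkeeping has to be consistent for both conclusions to go through simultaneously. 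With those conventions fixed, the argument is complete.
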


\begin{proof}

By definition, the filter on $P_\kappa X$ generated by $\C_0$ is $\NSS_{\kappa,X}^*$. 

Let us show that the filter generated by $\C_1$ equals that generated by $\C_2$. Suppose $C\in \C_2$ is weak club in $P_\kappa X$. Define $f:P_\kappa X\to P_\kappa X$ by letting $f(x)$ be some member of $C$ with $x\strsub f(x)$. Then $C_f\subseteq C$ because if $x\in C_f$ then $C$ is $\strsub $-cofinal in $P_{\kappa_x}x$ and hence $x\in C$. 

For the other direction, we fix a function $g:P_\kappa X\to P_\kappa X$ and show that $C_g$ is weak club in $P_\kappa X$. First let us check that $C_g$ is $\strsub$-cofinal in $P_\kappa X$. Fix $x\in P_\kappa X$. We define an increasing chain $\<x_\eta\st\eta<\kappa\>$ in $P_\kappa X$ as follows. Let $x_0=x$. Given $x_\eta$ we choose $x_{\eta+1}\in P_\kappa X$ with $\kappa_{x_{\eta+1}}=x_{\eta+1}\cap\kappa$ and $\bigcup f[P_{\kappa_{x_\eta}}x_\eta]\strsub x_{\eta+1}$. When $\eta<\kappa$ is a limit ordinal we let $x_\eta=\bigcup_{\alpha<\eta}x_\alpha$. Then $\<\kappa_{x_\eta}\st\eta<\kappa\>$ is a strictly increasing sequence in $\kappa$ and the set
\[C=\{\eta<\kappa\st(\forall\zeta<\eta)\kappa_{x_\zeta}<\eta\}=\{\eta<\kappa\st\kappa_{x_\eta}=\eta\}\]
is club in $\kappa$. Since $\kappa$ is Mahlo, we can fix some regular $\kappa_{x_\eta}=\eta\in C$. Clearly $x\strsub x_\eta$. Let us show that $x_\eta\in C_g$. Suppose $a\in P_{\kappa_{x_\eta}}x_\eta$. Since $\kappa_{x_\eta}=\eta$ is regular, $|a|<\kappa_{x_\eta}$ implies that $a\in P_{\kappa_{x_\zeta}}x_\zeta$ for some $\zeta<\eta$, and therefore 
\[g(a)\subseteq \bigcup g[P_{\kappa_{x_\zeta}}x_\zeta]\strsub x_{\zeta+1},\]
which implies $g(a)\in P_{\kappa_{x_\eta}}x_\eta$ and hence $x_\eta\in C_g$. Since $x\strsub x_\eta$, it follows that $C_g$ is $\strsub$-cofinal.

Now we verify that $C_g$ is $\strsub$-closed in $P_\kappa X$. Suppose $C_g\cap P_{\kappa_x}x$ is $\strsub $-cofinal in $P_{\kappa_x}x$. We must show that $x\in C_g$. Suppose $y\in P_{\kappa_x}x$. Then there is some $z\in C_g$ with $y\strsub z\strsub x$. Thus $g(y)\strsub z\strsub x$ and hence $x\in C_g$. 


Now let us verify that the filter generated by $\C_0$ equals that generated by $\C_1$. For any function $f:P_\kappa X\to P_\kappa X$ we have $C_f\subseteq B_f$, so the filter generated by $\C_0$ is contained in the filter generated by $\C_1$. Let us fix a function $g:P_\kappa X\to P_\kappa X$. We must show that there is a function $h:P_\kappa X\to P_\kappa X$ such that $B_h\subseteq C_g$. Define $h:P_\kappa X\to P_\kappa X$ by letting $h(x)$ be some member of $C_g$ with $g(x)\strsub h(x)$, for all $x\in P_\kappa X$. Suppose $x\in B_h$. To show $x\in C_g$, suppose $y\strsub x$. Then it follows that $g(y)\strsub h(y)\subseteq x$, which implies $g(y)\strsub x$ and thus $x\in C_g$. Therefore $B_h\subseteq C_g$ and hence the filter generated by $\C_0$ equals the filter generated by $\C_1$.
\end{proof}


We end this section by discussing the more common variants of ``club" and 
``stationary" subsets of $P_\kappa X$, introduced by Jech in 
\cite{MR0325397}. Recall that, for a regular cardinal $\kappa$ and 
a set $X \supseteq \kappa$, a set $C \subseteq P_\kappa X$ is said to be 
\emph{club} in $P_\kappa X$ if it is $\subseteq$-cofinal in $P_\kappa X$ 
and, whenever $D \subseteq C$ is a $\subseteq$-linearly ordered 
set of cardinality less than $\kappa$, we have $\bigcup D 
\in C$. This latter requirement is equivalent to the following formal 
strengthening: whenever $D \subseteq C$ is $\subseteq$-directed 
and $|D| < \kappa$, we have $\bigcup D \in C$. 
We then say that a set $S \subseteq P_\kappa X$ is \emph{stationary} if, 
for every club $C$ in $P_\kappa X$, we have $S \cap C \neq \emptyset$.
The following basic observation justifies the use of the name ``weak club" 
for the notion thusly designated above.

\begin{proposition}
    If $\kappa$ is weakly inaccessible, $X \supseteq \kappa$ is a set of 
    ordinals, and $C$ is a club in $P_\kappa X$, then $C$ is a weak 
    club in $P_\kappa X$.
\end{proposition}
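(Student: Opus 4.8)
The plan is to verify the two clauses in the definition of ``weak club'' for $C$ separately: first that $C$ is $\strsub$-cofinal in $P_\kappa X$, and then that $C$ is $\strsub$-closed in $P_\kappa X$. The first is easy. Fix $x\in P_\kappa X$. Since $\kappa$ is a limit cardinal there is an infinite cardinal $\lambda$ with $|x|<\lambda<\kappa$, and then $x':=x\cup\lambda$ belongs to $P_\kappa X$ with $\lambda\subseteq x'\cap\kappa$. Using $\subseteq$-cofinality of the club $C$, choose $y\in C$ with $x'\subseteq y$; then $\lambda\subseteq y\cap\kappa$, so $\kappa_y\geq\lambda>|x|$, while $x\subseteq y$, and hence $x\strsub y$. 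Thus $C$ is $\strsub$-cofinal in $P_\kappa X$ (this step uses only that $\kappa$ is a limit cardinal and that $C$ is $\subseteq$-cofinal).

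For $\strsub$-closure, let $x\in P_\kappa X$ be such that $C$ is $\strsub$-cofinal in $P_{\kappa_x}x$; the goal is $x\in C$. One first reduces to the case $\kappa_x\geq\omega$: if $\kappa_x$ is a nonzero finite ordinal, then for $y\subseteq x$ with $|y|=\kappa_x-1$ there is no $z$ with $y\strsub z\strsub x$, since such a $z$ would satisfy $|y|<|z\cap\kappa|\leq|z|<\kappa_x$, which is impossible; so the hypothesis fails (the remaining case $\kappa_x=0$ being degenerate). So assume $\kappa_x$ is infinite and put $D:=C\cap P_{\kappa_x}x$. Unwinding the hypothesis, for every $y\in P_{\kappa_x}x$ there is $z\in C$ with $y\strsub z\strsub x$; since $z\strsub x$ forces $z\in P_{\kappa_x}x$, such a $z$ lies in $D$, so $D$ is $\subseteq$-cofinal in $P_{\kappa_x}x$. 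As $\kappa_x$ is infinite, $D$ is moreover $\subseteq$-directed: given $z_1,z_2\in D$ we have $|z_1\cup z_2|<\kappa_x$, so $z_1\cup z_2\in P_{\kappa_x}x$ and $\subseteq$-cofinality of $D$ supplies a common upper bound in $D$. Finally, since $\kappa_x\geq 2$, every singleton $\{a\}$ with $a\in x$ lies in $P_{\kappa_x}x$ and hence in some member of $D$; as $D\subseteq P(x)$, this gives $\bigcup D=x$.

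At this point $D\subseteq C$ is a $\subseteq$-directed family with $\bigcup D=x$, and it is tempting to apply closure of $C$ under $\subseteq$-directed unions of size $<\kappa$. The one genuine obstacle is that $|D|$ need not be $<\kappa$, because $\kappa$ is only weakly (not strongly) inaccessible and $|P_{\kappa_x}x|$ can be as large as $\kappa$. The fix is to pass to a small directed subfamily: writing $x=\{a_i\st i<\theta\}$ with $\theta=|x|<\kappa$ (and $\theta$ infinite, since $\kappa_x$ is), choose $q_i\in D$ with $a_i\in q_i$ for each $i$, fix $g\colon[D]^{<\omega}\to D$ with $g(F)\supseteq\bigcup F$ for every finite $F\subseteq D$ (possible by directedness of $D$ and choice), and let $D'$ be the closure of $\{q_i\st i<\theta\}$ under $g$. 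Then $D'\subseteq D\subseteq C$, $|D'|\leq\theta<\kappa$, $D'$ is $\subseteq$-directed (each finite $F\subseteq D'$ has the upper bound $g(F)\in D'$), and $\bigcup D'=x$ (it contains every $a_i$ and is contained in $P(x)$). Closure of $C$ under $\subseteq$-directed unions of size $<\kappa$ now yields $x=\bigcup D'\in C$, as desired. So, apart from the routine checks that $D$ is cofinal, directed, and has union $x$, the point requiring care is this extraction of the small subfamily $D'$, forced on us because weak inaccessibility of $\kappa$ gives no bound on $|P_{\kappa_x}x|$ below $\kappa$.
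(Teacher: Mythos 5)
Your proof is correct and follows the same route as the paper: $\strsub$-cofinality from the limit-cardinal property, and $\strsub$-closure by exhibiting a $\subseteq$-directed subfamily of $C\cap P_{\kappa_x}x$ of size at most $|x|<\kappa$ with union $x$ and invoking closure of $C$ under small directed unions. The extraction of the small subfamily $D'$ is precisely the detail the paper dismisses as ``straightforward to construct,'' and your observation that one cannot simply use all of $C\cap P_{\kappa_x}x$ (since $\kappa$ is only weakly inaccessible) is a correct and worthwhile elaboration.
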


\begin{proof}
    Suppose that $C$ is a club in $P_\kappa X$. Since $\kappa$ is a limit 
    cardinal, the fact that $C$ is $\subseteq$-cofinal implies that it is 
    also $\prec$-cofinal. To verify closure, fix 
    $x \in P_\kappa X$ such that $C$ is $\prec$-cofinal in $P_{\kappa_x} 
    x$. Then it is straightforward to construct a set $D \subseteq C 
    \cap P_{\kappa_x} x$ such that
    \begin{itemize}
        \item $D$ is $\subseteq$-directed;
        \item $|D| \leq |x| < \kappa$;
        \item $\bigcup D = x$.
    \end{itemize}
    Since $C$ is a club, it follows that $\bigcup D = x \in C$. Thus, 
    $C$ is a weak club.
\end{proof}

\section{Two-cardinal derived topologies and $\xi$-strong stationarity}

Fix for this section an arbitrary regular uncountable cardinal $\kappa$ and 
a set of ordinals $X \supseteq \kappa$. We will investigate a sequence of 
derived topologies $\langle \tau_\xi \mid \xi < \kappa \rangle$ 
on $P_\kappa X$, simultaneously isolating a hierarchy of stationary 
reflection principles that characterize the existence of limit points 
with respect to these topologies. We emphasize that all definitions and 
arguments in this section are in the context of the ambient space of 
$P_\kappa X$. We begin by describing $\tau_0$, a generalization of 
the order topology.

\subsection{A generalization of the order topology to $P_\kappa X$}\label{section_generalizing_the_order_topology}

Given $x,y\in P_\kappa X$ with $x\strsub y$, let
\[(x,y]=\{z\in P_\kappa X\st x\strsub z\strsub y \lor z=y\}\]
and
\[(x,y)=\{z\in P_\kappa X\st x\strsub z\strsub y\}.\]
Let $\tau_0$ be the topology on $P_\kappa X$ generated by 
\[\B_0=\{0\}\cup\{(x,y]\st x,y\in P_\kappa X\land x\strsub y\}.\]
It is easy to see that $\B_0$ is a base for $\tau_0$ because if $x\in(a_0,b_0]\cap\cdots\cap (a_{n-1},b_{n-1}]$ then $x\in (a,x]\subseteq (a_0,b_0]\cap\cdots\cap (a_{n-1},b_{n-1}]$ where $a=\bigcup_{i < n}a_i$. For $A \subseteq P_\kappa X$, let
\[d_0(A)=\{x\in P_\kappa X\st \text{$x$ is a limit point of $A$ in $(P_\kappa X,\tau_0)$}\}.\]

\begin{proposition}
For every $A \subseteq P_\kappa X$, 
\[d_0(A)=\{x\in P_\kappa X\st \text{$A$ is $\strsub $-cofinal in $P_{\kappa_x}x$}\}.\]
\end{proposition}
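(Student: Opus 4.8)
The plan is to prove the two inclusions directly. The key step is the observation that, for a point $x \in P_\kappa X$ with $\kappa_x > 0$, the intervals $(a,x]$ with $a \strsub x$ form a base of open neighborhoods of $x$ in the space $(P_\kappa X, \tau_0)$; granting this, everything else is a routine unwinding of the definitions.

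First I would record some elementary facts about $\strsub$. It is transitive: if $z \strsub y \strsub x$, then $z \subseteq x$ and $|z| < \kappa_y \leq |y| < \kappa_x$, so $z \strsub x$. It is irreflexive, since $|x| < \kappa_x = |x \cap \kappa| \leq |x|$ is impossible. Moreover, for every $x \in P_\kappa X$ one has $\{b \in P_\kappa X \st b \strsub x\} = P_{\kappa_x} x$ and $x \notin P_{\kappa_x} x$; consequently, for $a \strsub x$ the punctured interval $(a,x] \setminus \{x\}$ equals $\{b \in P_{\kappa_x} x \st a \strsub b\}$.

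Next I would verify the neighborhood-base claim. Fix $x$ with $\kappa_x > 0$. Each $(a,x]$ with $a \strsub x$ belongs to $\B_0$, hence is open, and contains $x$. Conversely, let $U$ be open with $x \in U$. Since $\B_0$ is a base, there is $V \in \B_0$ with $x \in V \subseteq U$; as $x \neq \emptyset$, $V$ has the form $(a,b]$ with $a \strsub b$, and $x \in (a,b]$ forces either $x = b$, in which case $V = (a,x]$, or $a \strsub x \strsub b$, in which case transitivity and irreflexivity of $\strsub$ give $(a,x] \subseteq (a,b] = V$. In either case $(a,x] \subseteq U$ for some $a \strsub x$.

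Finally the two inclusions. For ``$\subseteq$'': if $x$ is a limit point of $A$, then for every $a \in P_{\kappa_x} x$ the open neighborhood $(a,x]$ meets $A \setminus \{x\}$ in some point $b$, and by the facts above $b \in P_{\kappa_x} x$ with $a \strsub b$; hence $A$ is $\strsub$-cofinal in $P_{\kappa_x} x$. For ``$\supseteq$'': if $A$ is $\strsub$-cofinal in $P_{\kappa_x} x$, then in particular $P_{\kappa_x} x$ is nonempty, so $\kappa_x > 0$ and the neighborhood-base claim applies; given open $U \ni x$, choose $a \strsub x$ with $(a,x] \subseteq U$ and then $b \in A \cap P_{\kappa_x} x$ with $a \strsub b$, whence $b \in (a,x] \setminus \{x\} \subseteq U$, so $x$ is a limit point of $A$. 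The only points calling for a separate word are the degenerate ones with $\kappa_x = 0$ — in particular $x = \emptyset$, which is isolated since $\{\emptyset\} \in \B_0$; such points are never limit points, which is consistent with reading ``$A$ is $\strsub$-cofinal in $P_{\kappa_x} x$'' as failing when $P_{\kappa_x} x = \emptyset$. I do not anticipate a genuine obstacle in any of this; the only mildly delicate point is the neighborhood-base verification, and it reduces to transitivity of $\strsub$ together with the inclusion $(a,x] \subseteq (a,b]$ whenever $x \strsub b$.
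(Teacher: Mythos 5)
Your argument is correct and follows essentially the same route as the paper's proof: both directions reduce to the observation that the intervals $(a,x]$ with $a\strsub x$ form a neighborhood base at $x$, which you verify explicitly via transitivity of $\strsub$ and the inclusion $(a,x]\subseteq(a,b]$, while the paper leaves this implicit. Your extra care with the degenerate points $\kappa_x=0$ goes beyond what the paper records and rests on a reasonable convention (cofinality in the empty set fails), so it does not affect the substance of the proof.
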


\begin{proof}
Fix $A \subseteq P_\kappa X$ and $x\in d_0(A)$, and suppose that $y\in P_{\kappa_x}x$. Since $(y,x]$ is an open neighborhood of $x$, we can choose a $z\in (y,x]\cap A$ with $z\neq x$. This implies $z\in (y,x)\cap A$, and hence $A$ is $\strsub $-cofinal in $P_{\kappa_x}x$. Conversely, suppose $A$ is $\strsub $-cofinal in $P_{\kappa_x}x$ and let $(a,b]$ be a basic open neighborhood of $x$. Then $a\in P_{\kappa_x}x$ and we may choose some $y\in A$ with $a\strsub y\in P_{\kappa_x}x$. Hence $y\in (a,b]\cap A\setminus\{x\}$.
\end{proof}

\begin{corollary}\label{corollary_successors_are_isolated}
A point $x\in P_\kappa X$ is not isolated in $\tau_0$ if and only if $\kappa_x=|x\cap\kappa|$ is a limit cardinal.
\end{corollary}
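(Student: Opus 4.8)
The plan is to use the characterization of $d_0(A)$ from the preceding proposition with $A = P_\kappa X$ itself. Observe that $x \in P_\kappa X$ is isolated in $\tau_0$ precisely when $x \notin d_0(P_\kappa X)$, since $x$ is always a member of $P_\kappa X$, so $x$ is non-isolated iff it is a limit point of the whole space. By the proposition, this happens iff $P_\kappa X$ is $\strsub$-cofinal in $P_{\kappa_x} x$, i.e. iff for every $y \in P_{\kappa_x} x$ there is some $z \in P_{\kappa_x} x$ (equivalently, some $z \in P_\kappa X$ with $z \strsub x$ — note these coincide) with $y \strsub z$. So the statement reduces to: $P_{\kappa_x} x$ is $\strsub$-cofinal in itself iff $\kappa_x$ is a limit cardinal.

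For the forward direction I would argue contrapositively: if $\kappa_x = |x \cap \kappa|$ is a successor cardinal, say $\kappa_x = \mu^+$, then since $y \strsub z$ requires $|y| < |z \cap \kappa| \le \kappa_x = \mu^+$, i.e. $|z \cap \kappa| \le \mu^+$, one cannot have $|y|$ strictly below $|z\cap\kappa|$ for arbitrarily large $y$; concretely, pick $y \in P_{\kappa_x} x$ with $|y| = \mu$ (possible since $\mu < \mu^+ = \kappa_x$ and $\kappa \subseteq X$, so $x$ has at least $\mu^+$ elements from $x \cap \kappa$ alone). For any $z$ with $y \strsub z$ we'd need $|z \cap \kappa| > \mu$, hence $|z \cap \kappa| = \mu^+ = \kappa_x$; but any $z \in P_{\kappa_x} x$ has $|z| < \kappa_x$, so $|z \cap \kappa| < \kappa_x$ — contradiction. (The case $\kappa_x = 0$, i.e. $x \cap \kappa = \emptyset$, is similar and even more trivial: then $P_{\kappa_x} x$ is empty, so $x$ is automatically isolated.) Hence if $\kappa_x$ is a successor cardinal or zero, $x$ is isolated.

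For the reverse direction, suppose $\kappa_x$ is a limit cardinal (in particular $\kappa_x > 0$). Given any $y \in P_{\kappa_x} x$, set $\lambda = |y|$, so $\lambda < \kappa_x$. Since $\kappa_x$ is a limit cardinal there is a cardinal $\nu$ with $\lambda < \nu < \kappa_x$; because $\kappa \subseteq X \subseteq x$ — wait, more precisely because $|x \cap \kappa| = \kappa_x > \nu$, we can choose $z$ with $y \subseteq z \subseteq x$ and $|z \cap \kappa| \geq \nu$ (enlarge $y$ by throwing in enough ordinals below $\kappa$ that lie in $x$), while keeping $|z| < \kappa_x$. Then $|y| = \lambda < \nu \le |z \cap \kappa|$, so $y \strsub z$, and $z \in P_{\kappa_x} x$. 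Thus $P_{\kappa_x} x$ is $\strsub$-cofinal in itself, so $x \in d_0(P_\kappa X)$ and $x$ is not isolated.

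The main obstacle, such as it is, is bookkeeping with the definition of $\strsub$: keeping straight that $x \strsub y$ demands $|x| < |y \cap \kappa| = \kappa_y$ rather than $|x| < |y|$, and correctly handling the degenerate case $x \cap \kappa = \emptyset$. One should double-check that "$z \strsub x$ with $z \in P_\kappa X$" and "$z \in P_{\kappa_x} x$" deliver the same cofinality notion here — they do, since $z \strsub x$ already forces $z \subseteq x$ and $|z| < \kappa_x$, i.e. $z \in P_{\kappa_x} x$, and conversely any $\strsub$-cofinal family inside $P_{\kappa_x}x$ witnesses cofinality in the sense required by $d_0$. No deep idea is needed beyond the prior proposition; this is essentially an unwinding of definitions together with the elementary fact that a cardinal is a limit cardinal iff the cardinals below it have no maximum.
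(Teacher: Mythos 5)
Your argument is essentially the paper's intended one: the corollary is stated as an immediate consequence of the preceding proposition, obtained by taking $A=P_\kappa X$ and checking that $P_\kappa X$ is $\strsub$-cofinal in $P_{\kappa_x}x$ exactly when $\kappa_x$ is a limit cardinal. Your treatment of the successor case and the limit case is correct, and your observation that $z\strsub x$ is the same as $z\in P_{\kappa_x}x$ is the right bookkeeping.

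One caveat: your parenthetical about the degenerate case $x\cap\kappa=\emptyset$ gets the conclusion backwards. If $P_{\kappa_x}x=\emptyset$, then by your own reduction every set is \emph{vacuously} $\strsub$-cofinal in $P_{\kappa_x}x$, so the proposition puts $x$ into $d_0(P_\kappa X)$, i.e.\ $x$ is \emph{not} isolated; equivalently, such an $x$ lies in no set of the form $(a,b]$ at all (since $a\strsub x$ would require $|a|<0$), so its only neighborhood is the whole space. Thus "$P_{\kappa_x}x$ is empty, so $x$ is automatically isolated" is false. This is a defect the corollary's statement itself inherits (under the usual convention that $0$ is not a limit cardinal), so it does not reflect a real gap in your understanding of the substantive successor-versus-limit dichotomy, but as written that sentence of your proof asserts the wrong thing.
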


The following proposition connects the order topology $\tau_0$ on $P_\kappa X$ to the notion of weak club discussed in Section \ref{section_strong_stationarity}, in the case where $\kappa$ is a weakly Mahlo cardinal.

\begin{proposition}\label{proposition_trace_of_a_cofinal_set}
If $\kappa$ is weakly Mahlo and $A$ is $\strsub$-cofinal in $P_\kappa X$ then $d_0(A)$ is a weak club in $P_\kappa X$.
\end{proposition}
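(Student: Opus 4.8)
The plan is to verify the two defining properties of a weak club for $d_0(A)$: $\strsub$-cofinality in $P_\kappa X$ and $\strsub$-closure in $P_\kappa X$. By the Proposition characterizing $d_0$, we have $d_0(A) = \{x \in P_\kappa X \mid A \text{ is } \strsub\text{-cofinal in } P_{\kappa_x} x\}$, so both properties are statements purely about $\strsub$-cofinality of $A$ in various sections $P_{\kappa_y} y$.

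For $\strsub$-closure, suppose $x \in P_\kappa X$ is such that $d_0(A)$ is $\strsub$-cofinal in $P_{\kappa_x} x$; we must show $x \in d_0(A)$, i.e. that $A$ is $\strsub$-cofinal in $P_{\kappa_x} x$. Fix $y \strsub x$. Using $\strsub$-cofinality of $d_0(A)$ in $P_{\kappa_x} x$, choose $z \in d_0(A)$ with $y \strsub z \strsub x$. Then $A$ is $\strsub$-cofinal in $P_{\kappa_z} z$, so there is some $w \in A$ with $y \strsub w \strsub z$; since $z \strsub x$ we get $w \strsub x$, and one checks transitivity gives $y \strsub w \strsub x$ as desired. (Here one must be slightly careful that $y \strsub z \strsub x$ together with $y \in P_{\kappa_z} z$ lets us apply cofinality of $A$ in $P_{\kappa_z} z$ to $y$; this is where I would double-check the interaction of $|y| < \kappa_z$ with $y \strsub z$.) So $A$ is $\strsub$-cofinal in $P_{\kappa_x} x$ and $x \in d_0(A)$. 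This part uses no hypothesis on $\kappa$.

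For $\strsub$-cofinality of $d_0(A)$ in $P_\kappa X$, I expect to reuse the chain-construction idea from the proof of the Fact in Section~2. Fix $x_0 \in P_\kappa X$. Build an increasing continuous chain $\langle x_\eta \mid \eta < \kappa \rangle$: at successor stages, since $A$ is $\strsub$-cofinal in $P_\kappa X$, absorb enough elements of $A$ witnessing cofinality over $P_{\kappa_{x_\eta}} x_\eta$ into $x_{\eta+1}$ (also arranging $\kappa_{x_{\eta+1}} = x_{\eta+1} \cap \kappa$ and that $\kappa_{x_{\eta+1}} > \eta$), and take unions at limits. The set $C = \{\eta < \kappa \mid \kappa_{x_\eta} = \eta\}$ is club in $\kappa$, so by weak Mahloness of $\kappa$ pick a regular $\eta \in C$. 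Then $x_\eta \strsub x_0$ (so cofinality is witnessed), and regularity of $\kappa_{x_\eta} = \eta$ guarantees that any $a \in P_{\kappa_{x_\eta}} x_\eta$ already lies in $P_{\kappa_{x_\zeta}} x_\zeta$ for some $\zeta < \eta$, whence the element of $A$ absorbed at stage $\zeta+1$ serving as a $\strsub$-bound over $P_{\kappa_{x_\zeta}} x_\zeta$ witnesses that $A$ is $\strsub$-cofinal in $P_{\kappa_{x_\eta}} x_\eta$, so $x_\eta \in d_0(A)$.

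The main obstacle is the bookkeeping in the successor step: one needs $A$ to be $\strsub$-cofinal in $P_{\kappa_{x_\eta}} x_\eta$ for the chosen regular $\eta$, and this must be secured not by a single absorbed element but by absorbing, at each stage $\zeta$, a whole family of elements of $A$ that is $\strsub$-cofinal in $P_{\kappa_{x_\zeta}} x_\zeta$ — and this family has size at most $|P_{\kappa_{x_\zeta}} x_\zeta| < \kappa$, so it fits into a single $x_{\zeta+1} \in P_\kappa X$. Care is also needed that $\strsub$ (which involves the strict size inequality $|\cdot| < \kappa_{(\cdot)}$) behaves well along the chain, in particular that $x_\zeta \strsub x_\eta$ for $\zeta < \eta \in C$, which follows since $|x_\zeta| \le \kappa_{x_{\zeta}} \cdot |\zeta| < \eta = \kappa_{x_\eta}$ using $\eta \in C$. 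Everything else is routine manipulation of the $\strsub$ ordering.
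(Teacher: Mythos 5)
Your closure argument is correct and coincides with the paper's: given $y\strsub x$, pick $z\in d_0(A)$ with $y\strsub z\strsub x$, then $w\in A$ with $y\strsub w\strsub z\strsub x$. The point you flag is harmless, since $y\strsub z$ is by definition exactly the statement that $y\in P_{\kappa_z}z$ (i.e.\ $y\subseteq z$ and $|y|<\kappa_z$), so cofinality of $A$ in $P_{\kappa_z}z$ applies to $y$ directly. Your overall strategy for the cofinality of $d_0(A)$ --- a continuous $\strsub$-increasing chain of length $\kappa$, the club $C=\{\eta<\kappa\st\kappa_{x_\eta}=\eta\}$, and weak Mahloness to find a regular $\eta\in C$ --- is also exactly the paper's.

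The one genuine problem is in your final paragraph. The claim that the cofinal family has size at most $|P_{\kappa_{x_\zeta}}x_\zeta|<\kappa$ is false in general: $|P_{\kappa_{x_\zeta}}x_\zeta|$ can be as large as $|x_\zeta|^{<\kappa_{x_\zeta}}$, and since $\kappa$ is only assumed weakly Mahlo rather than strongly inaccessible, this can exceed $\kappa$, so a family of elements of $A$ indexed by $P_{\kappa_{x_\zeta}}x_\zeta$ need not fit inside a single member of $P_\kappa X$. Fortunately the family is unnecessary, and your third paragraph already contains the correct step: a \emph{single} $x_{\zeta+1}\in A$ with $x_\zeta\strsub x_{\zeta+1}$ is a $\strsub$-upper bound for \emph{every} $a\in P_{\kappa_{x_\zeta}}x_\zeta$ simultaneously, since $a\subseteq x_\zeta\subseteq x_{\zeta+1}$ and $|a|<\kappa_{x_\zeta}\leq\kappa_{x_{\zeta+1}}$. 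This is precisely what the paper does: the successor stages of the chain are themselves chosen from $A$, and for regular $\eta\in C$ each $a\in P_{\kappa_{x_\eta}}x_\eta$ lies in some $P_{\kappa_{x_\zeta}}x_\zeta$ with $\zeta<\eta$ and is then $\strsub$-below $x_{\zeta+1}\in A\cap P_{\kappa_{x_\eta}}x_\eta$. Two smaller slips: cofinality is witnessed by $x_0\strsub x_\eta$, not $x_\eta\strsub x_0$; and the inequality $|x_\zeta|\leq\kappa_{x_\zeta}\cdot|\zeta|$ need not hold, since $|x_\zeta|$ can greatly exceed $\kappa_{x_\zeta}$ --- instead, $x_\zeta\strsub x_\eta$ follows from $|x_\zeta|<\kappa_{x_{\zeta+1}}<\eta=\kappa_{x_\eta}$.
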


\begin{proof}
First let us show that $d_0(A)$ is $\strsub$-closed. Suppose $d_0(A)$ is $\strsub$-cofinal in $P_{\kappa_x}x$. Fix $a\in P_{\kappa_x}x$ and let $b\in d_0(A)\cap(a,x)$. Then $A$ is $\strsub$-cofinal in $P_{\kappa_b}b$, so we may choose $c\in A\cap (a,b)\subseteq A\cap (a,x)$ and hence $x\in d_0(A)$.

Let us show that $d_0(A)$ is $\strsub$-cofinal in $P_\kappa X$. Fix $x\in P_\kappa X$. We define an increasing chain $\<x_\eta\st\eta<\kappa\>$ in $P_\kappa X$ as follows. Let $x_0=x$. Given $x_\eta$ choose $x_{\eta+1}\in A$ with $x_\eta\strsub x_{\eta+1}$. If $\eta<\kappa$ is a limit let $x_\eta=\bigcup_{\zeta<\eta}x_\zeta$. Then $\<\kappa_{x_\eta}\st\eta<\kappa\>$ is a strictly increasing sequence in $\kappa$ and the set 
\[C=\{\eta<\kappa\st (\forall\zeta<\eta)\ \kappa_{x_\zeta}<\eta\}=\{\eta<\kappa\st\kappa_{x_\eta}=\eta\}\]
is club in $\kappa$. Thus, since $\kappa$ is weakly Mahlo, we can fix some regular $\kappa_{x_\eta}=\eta\in C$. Let us show that $x_\eta \in d_0(A)$. Suppose $a\in P_{\kappa_{x_\eta}}x_\eta$. Since $\kappa_{x_\eta}=\eta$ is regular, $|a|<\kappa_{x_\eta}$ implies $a\in P_{\kappa_{x_\zeta}}x_\zeta$ for some $\zeta<\eta$, and therefore $a\in P_{\kappa_{x_{\zeta+1}}}x_{\zeta+1}$ which entails that $a\strsub x_{\zeta+1}\in A$.
\end{proof}

Recall that an ordinal $\delta$ has uncountable cofinality if and only if for every $A\subseteq\delta$ which is unbounded in $\delta$, there is an $\alpha<\delta$ such that $A$ is unbounded in $\alpha$. The following proposition is the analogous result for the notion of $\strsub$-cofinality in $P_\kappa X$ when $\kappa$ is weakly inaccessible.

\begin{proposition}\label{proposition_reflecting_cofinality}
If $\kappa$ is weakly inaccessible, then the following are equivalent.
\begin{enumerate}
\item $\kappa$ is a weakly Mahlo cardinal.
\item For all $A\subseteq P_\kappa X$ if $A$ is $\strsub $-cofinal in $P_\kappa X$ then there is an $x\in P_\kappa X$ such that $A$ is $\strsub $-cofinal in $P_{\kappa_x}x$.
\end{enumerate}
\end{proposition}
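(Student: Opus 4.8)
The plan is to prove the equivalence by establishing (1)$\Rightarrow$(2) directly and (2)$\Rightarrow$(1) by contrapositive. Both directions will be modeled on the ``chain-building'' argument already used in the proof of Proposition~\ref{proposition_trace_of_a_cofinal_set}, so most of the work is a careful reorganization of that construction.

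For (1)$\Rightarrow$(2), assume $\kappa$ is weakly Mahlo and let $A$ be $\strsub$-cofinal in $P_\kappa X$. By Proposition~\ref{proposition_trace_of_a_cofinal_set}, $d_0(A)$ is a weak club in $P_\kappa X$; in particular it is $\strsub$-cofinal in $P_\kappa X$, so we may pick any $x\in d_0(A)$, and then by the characterization of $d_0$ in the first proposition of this subsection, $A$ is $\strsub$-cofinal in $P_{\kappa_x}x$. (One should note that $d_0(A)$ is nonempty: since $\kappa$ is a limit cardinal and $A$ is $\strsub$-cofinal, the standard $\kappa$-chain construction produces an element of $P_\kappa X$ of the form $\bigcup_{\eta<\lambda}x_\eta$ for regular $\lambda<\kappa$, which lies in $d_0(A)$.) So this direction is essentially immediate from the preceding results.

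For (2)$\Rightarrow$(1), suppose toward a contradiction that $\kappa$ is weakly inaccessible but not weakly Mahlo, so there is a club $E\subseteq\kappa$ consisting of singular cardinals. The idea is to build, by recursion of length $\kappa$, an increasing continuous chain $\langle x_\eta\mid\eta<\kappa\rangle$ in $P_\kappa X$ with $\kappa_{x_\eta}$ strictly increasing, arranged so that at limit stages $\eta$ with $\kappa_{x_\eta}=\eta$ we have $\kappa_{x_\eta}\notin E$ is impossible --- i.e., we thin out to ensure every ``closure point'' $x_\eta$ of the chain has $\kappa_{x_\eta}\in E$, hence $\kappa_{x_\eta}$ singular. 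Concretely, first run the usual construction to get a chain whose set of closure points $\{\eta<\kappa\mid\kappa_{x_\eta}=\eta\}$ is club, and observe that this club meets $E$; by interleaving steps that stretch $\kappa_{x_\eta}$ past the next element of $E$, one forces the closure set to be a subset of $E$. Let $A=\{x_\eta\mid\eta<\kappa\}$. Then $A$ is $\strsub$-cofinal in $P_\kappa X$ by construction. Now suppose $x\in P_\kappa X$ with $A$ $\strsub$-cofinal in $P_{\kappa_x}x$; a cofinal-chain argument (as in Proposition~\ref{proposition_trace_of_a_cofinal_set}) shows $x$ must be the union of an initial segment of the $x_\eta$'s, forcing $\kappa_x$ to be one of the closure ordinals and hence $\kappa_x\in E$, so $\kappa_x$ is singular. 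But a more careful look shows $A$ cannot be $\strsub$-cofinal in $P_{\kappa_x}x$ when $\kappa_x$ is singular: if $\kappa_x$ is singular of cofinality $\mu<\kappa_x$, then a $\strsub$-cofinal subset of $P_{\kappa_x}x$ would need elements $y$ with $|y|$ arbitrarily close to $\kappa_x$, but the $x_\eta$'s sitting inside $x$ all have $|x_\eta|<\kappa_{x_{\eta+1}}\le\kappa_x$, and one uses the fact that $\kappa_x=\kappa_{x_\eta}$ is a \emph{limit} of the earlier $\kappa_{x_\zeta}$ to get a contradiction with the $\strsub$-requirement $|y|<\kappa_x$ being unachievable cofinally --- so in fact no such $x$ exists, contradicting~(2).

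The main obstacle is getting the failure direction exactly right: it requires being precise about why a $\strsub$-cofinal reflection point $x$ is forced to ``sit on top of'' an initial segment of the constructed chain, and then why singularity of $\kappa_x$ obstructs $A$ being $\strsub$-cofinal in $P_{\kappa_x}x$. The cleanest route is probably to arrange the chain so that for each $\eta$, $|x_\eta|<\kappa_{x_{\eta+1}}$ but also $|x_\eta|$ is \emph{cofinal} among cardinals below $\kappa_x$ whenever $x=\bigcup_{\zeta<\eta}x_\zeta$ — then $\strsub$-cofinality of $A$ in $P_{\kappa_x}x$ would demand, for each $a\in P_{\kappa_x}x$, an $x_\zeta$ with $a\strsub x_\zeta$, i.e.\ $|a|<\kappa_{x_\zeta}$, which is fine; the real contradiction comes from picking $a$ with $|a|$ large, forcing $|x_\zeta|\ge|a|$ and hence (by the cofinality arrangement) $\kappa_{x_\zeta}$ beyond any fixed bound below $\kappa_x$, so $x_\zeta\notin P_{\kappa_x}x$. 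Pinning down this bookkeeping — perhaps by instead directly using that weak inaccessibility plus (2) gives a club-based reflection that forces $\kappa$ to reflect regularity — is where the care is needed; the rest is routine transfinite recursion.
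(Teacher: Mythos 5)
Your direction (1)$\Rightarrow$(2) is fine and is exactly the paper's argument (via Proposition \ref{proposition_trace_of_a_cofinal_set}). The direction (2)$\Rightarrow$(1) has two genuine gaps. First, the witnessing set $A=\{x_\eta\mid\eta<\kappa\}$ built from a single $\prec$-increasing chain of length $\kappa$ is in general \emph{not} $\strsub$-cofinal in $P_\kappa X$: each $x_\eta$ has size $<\kappa$, so $\bigcup_{\eta<\kappa}x_\eta$ has size at most $\kappa$, and when $|X|>\kappa$ there are $z\in P_\kappa X$ contained in no $x_\eta$ at all. So the very first sentence after the construction ("Then $A$ is $\strsub$-cofinal in $P_\kappa X$ by construction") fails, and the whole approach collapses. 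The paper sidesteps this by taking as its witness the set $D=\{x\in P_\kappa X\mid x\cap\kappa\in C\}$ for a club $C\subseteq\kappa$ of singular cardinals, which is trivially $\prec$-cofinal.

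Second, even granting a reflection point $x$ with $\kappa_x$ singular, your proposed contradiction does not close. The argument "pick $a$ with $|a|$ large, forcing $\kappa_{x_\zeta}$ beyond any fixed bound below $\kappa_x$, so $x_\zeta\notin P_{\kappa_x}x$" is not valid: there \emph{are} members $x_\zeta$ of the chain with $\kappa_{x_\zeta}$ arbitrarily large below $\kappa_x$ and still $|x_\zeta|<\kappa_{x_{\zeta+1}}<\kappa_x$, so such $x_\zeta$ do lie in $P_{\kappa_x}x$. You flag this step as the "main obstacle" and leave it unresolved, which is precisely where the content of the proposition lives. Two correct ways to finish exist. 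The paper's way: show that the reflection point $y$ for $D$ has $y\cap\kappa$ transitive (using that every $z\in D$ has $z\cap\kappa$ an ordinal), take $a\subseteq\kappa_y$ cofinal with $|a|=\cf(\kappa_y)<\kappa_y$, and note that any $z\in D$ with $a\strsub z$ would have $z\cap\kappa$ an ordinal $\geq\kappa_y$, hence $\kappa_z\geq\kappa_y$, contradicting $z\in P_{\kappa_y}y$. Alternatively, for your chain setup one could use a diagonal set: if $x=\bigcup_{\eta<\delta}x_\eta$ with $\cf(\delta)=\cf(\kappa_x)<\kappa_x$, choose $p_i\in x_{\eta_{i+1}}\setminus x_{\eta_i}$ along a cofinal sequence $\langle\eta_i\mid i<\cf(\delta)\rangle$ and observe that $a=\{p_i\mid i<\cf(\delta)\}\in P_{\kappa_x}x$ is contained in no $x_\eta$ with $\eta<\delta$. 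Neither of these appears in your writeup, and the transitivity point (or a substitute for it) is exactly the piece of bookkeeping your sketch is missing.
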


\begin{proof}
The fact that (1) implies (2) follows from Proposition \ref{proposition_trace_of_a_cofinal_set}. Let us show that (2) implies (1). We assume (2) holds, and that $\kappa$ is weakly inaccessible but not weakly Mahlo. Let $C \subseteq \kappa$ be a club consisting of 
singular cardinals, and let $D=\{x\in P_\kappa X\st x\cap\kappa\in C\}$. Then $D$ is $\strsub $-cofinal in $P_\kappa X$. By (2), there is a $y\in P_\kappa X$ such that $D$ is $\strsub $-cofinal in $P_{\kappa_y}y$. Then $\kappa_y$ is a limit cardinal and $C$ is cofinal in $\kappa_y$, so $\kappa_y\in C$ is a singular cardinal. 

Let us argue that $y\cap\kappa$ is an ordinal less than $\kappa$. Suppose $\alpha\in y\cap\kappa$. Since $D$ is cofinal in $P_{\kappa_y}y$ there is some $z\in D\cap P_{\kappa_y}y$ such that $\{\alpha\}\strsub  z$. Thus $\alpha\in z$ and $z\cap\kappa$ is an ordinal, which implies $\alpha\subseteq z\cap\kappa\subseteq y\cap\kappa$. Hence $y\cap\kappa$ is transitive.

Let $a\subseteq\kappa_y$ be cofinal in $\kappa_y$ with $|a|=\cf(\kappa_y)<\kappa_y$. Since $y\cap\kappa$ is an ordinal we have $a\subseteq\kappa_y=|y\cap\kappa|\subseteq y\cap\kappa$ and thus $a\in P_{\kappa_y}y$. However, there is no $x\in D\cap P_{\kappa_y}y$ with $a\strsub x$ because for such an $x$, $\kappa\cap x\in C$ would be an ordinal containing the set $a$ which is cofinal in $\kappa_y$, and hence $\kappa_x\geq\kappa_y$.
\end{proof}

We note that the assumption that $\kappa$ is weakly inaccessible 
is necessary in Proposition \ref{proposition_reflecting_cofinality}, 
but only for the somewhat trivial reason that, if $\kappa$ is a 
successor cardinal, then there are no $\prec$-cofinal subsets 
of $P_\kappa X$.

\subsection{Definitions of derived topologies and iterated stationarity in $P_\kappa X$}\label{section_derived_topologies}

With the topology $\tau_0$ on $P_\kappa X$, the base $\B_0$ for $\tau_0$ and the Cantor derivative $d_0$ in hand, we can now define the derived topologies on $P_\kappa X$ as follows. Given $\tau_\xi$, $\B_\xi$ and $d_\xi$, we let
\[\B_{\xi+1}=\B_\xi\cup\{d_\xi(A)\st A\subseteq P_\kappa X\},\]
we let $\tau_{\xi+1}$ be the topology generated by $\B_{\xi+1}$ and we let $d_{\xi+1}$ be defined by
\[d_{\xi+1}(A)=\{x\in P_\kappa X\st \text{$x$ is a limit point of $A$ in $(P_\kappa X,\tau_{\xi+1})$}\}\] 
for $A\subseteq P_\kappa X$.
When $\xi$ is a limit ordinal we let $\tau_\xi$ be the topology generated by $\B_\xi:=\bigcup_{\zeta<\xi}\B_\zeta$ and we let $d_\xi$ be the Cantor derivative of the space $(P_\kappa X,\tau_\xi)$.

Since $\B_0$ is a base for $\tau_0$, it easily follows that the sets of the form
\[I\cap d_{\xi_0}(A_0)\cap\cdots\cap d_{\xi_{n-1}}(A_{n-1})\]
where $I\in \B_0$, $n<\omega$, $\xi_i<\xi$ and $A_i\subseteq P_\kappa X$ for $i<n$, form a base for $\tau_\xi$. We return to the question of whether 
or not $\B_\xi$ forms a base for $\tau_\xi$ in Theorem \ref{theorem_base_characterization} below, as well as in Subsection \ref{section_variation}.

Let us note here that the next two lemmas can easily be established using arguments similar to those for \cite[Proposition 2.1 and Corollary 2.2]{MR3894041}.

\begin{lemma}\label{lemma_factor_out_d_xi}
For all $\zeta<\xi$ and all $A_0,\ldots,A_n\subseteq P_\kappa X$,
\[d_\zeta(A_0)\cap\cdots\cap d_\zeta(A_{n-1})\cap d_\xi(A_n)=d_\xi(d_\zeta(A_0)\cap\cdots\cap d_\zeta(A_{n-1})\cap A_n).\]
\end{lemma}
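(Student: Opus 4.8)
\textbf{Proof plan for Lemma~\ref{lemma_factor_out_d_xi}.}
The plan is to prove the two inclusions separately, using only two elementary facts: (i) if $\zeta < \xi$ then $\tau_\zeta \subseteq \tau_{\xi}$, since $\B_\zeta \subseteq \B_\xi$ by construction, so every $\tau_\zeta$-open set is $\tau_\xi$-open; and (ii) for any topology $\tau$ on $P_\kappa X$ with derivative $d$, and any sets $U, A$ with $U$ open, one has $d(U \cap A) \cap U = d(A) \cap U$ (a point of $U$ is a limit point of $A$ iff it is a limit point of $A \cap U$, because $U$ can be intersected into any neighborhood witness). I would also record the trivial fact that $d_\zeta(B) \supseteq d_\zeta(C)$ whenever $B \supseteq C$, and that each $d_\zeta(B)$ is $\tau_{\zeta+1}$-open, hence $\tau_\xi$-open since $\zeta < \xi$ means $\zeta + 1 \le \xi$ and so $\tau_{\zeta+1}\subseteq\tau_\xi$.

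For the left-to-right inclusion, suppose $x$ lies in $d_\zeta(A_0)\cap\cdots\cap d_\zeta(A_{n-1})\cap d_\xi(A_n)$. Write $U = d_\zeta(A_0)\cap\cdots\cap d_\zeta(A_{n-1})$; this is a finite intersection of $\tau_\xi$-open sets, hence $\tau_\xi$-open, and $x \in U$. Since $x \in d_\xi(A_n)$ and $x \in U$, fact~(ii) applied in the space $(P_\kappa X,\tau_\xi)$ gives $x \in d_\xi(A_n \cap U) = d_\xi(U \cap A_n)$, which is exactly $d_\xi(d_\zeta(A_0)\cap\cdots\cap d_\zeta(A_{n-1})\cap A_n)$.

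For the right-to-left inclusion, suppose $x \in d_\xi(U \cap A_n)$ where again $U = d_\zeta(A_0)\cap\cdots\cap d_\zeta(A_{n-1})$. Since $U \cap A_n \subseteq A_n$, monotonicity of $d_\xi$ gives $x \in d_\xi(A_n)$. It remains to see that $x \in d_\zeta(A_i)$ for each $i < n$. Here I would use that $U\cap A_n \subseteq U \subseteq d_\zeta(A_i)$, so $x \in d_\xi(d_\zeta(A_i))$; now $d_\zeta(A_i)$ is $\tau_{\zeta+1}$-open and $\tau_{\zeta+1}\subseteq\tau_\xi$, so $d_\zeta(A_i)$ is $\tau_\xi$-open — and in any topological space a limit point of a set that happens to be open belongs to that set (an open set contains a neighborhood of each of its limit points, indeed contains the point itself once the point is in the closure of the set and the set is open, since $d_\xi(V)\subseteq \overline{V}=V$ for open $V$ in a $T_1$-ish sense — more precisely $d_\xi(V) \cap V^c = \emptyset$ is false in general, so I must be careful here).

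The subtle point — the one I expect to be the main obstacle — is the last step: it is \emph{not} true in an arbitrary topological space that a limit point of an open set lies in that set. What is true, and what I would actually invoke, is the following: if $V$ is open and $x \in d_\xi(V)$, then every $\tau_\xi$-neighborhood of $x$ meets $V$ in a point other than $x$; I want to conclude $x \in d_\zeta(A_i)$, i.e.\ $x$ is a $\tau_\zeta$-limit point of $A_i$. Fix a $\tau_\zeta$-open neighborhood $W$ of $x$. Then $W \cap V$ is a $\tau_\xi$-open neighborhood of $x$ (as $V = d_\zeta(A_i)$ is $\tau_\xi$-open), so there is $y \in W \cap V$ with $y \ne x$; but $y \in V = d_\zeta(A_i)$ and $y \in W$, with $W$ being $\tau_\zeta$-open — however this only tells me $W$ is a neighborhood of $y$ and $y$ is a $\tau_\zeta$-limit point of $A_i$, which does not immediately produce a point of $A_i$ in $W \setminus \{x\}$. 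To fix this cleanly I would instead shrink: pick a basic $\tau_\zeta$-neighborhood, use that $\B_0$-intervals are ``downward'' well-behaved, or — more robustly — simply observe that $d_\zeta(A_i)$ is open in $\tau_{\zeta+1}$ \emph{and} that $x$ being a $\tau_\xi$-limit point of the $\tau_{\zeta+1}$-open set $d_\zeta(A_i)$ forces $x \in d_\zeta(A_i)$ after all, because $\tau_\xi \supseteq \tau_{\zeta+1}$ implies $d_\xi(E) \subseteq d_{\zeta+1}(E)$ for all $E$, so $x \in d_{\zeta+1}(d_\zeta(A_i))$, and for a $\tau_{\zeta+1}$-open set $V$ one does have $d_{\zeta+1}(V) \subseteq \overline{V}^{\,\tau_{\zeta+1}} = V$. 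This is the argument I would commit to, and it mirrors \cite[Corollary 2.2]{MR3894041}.
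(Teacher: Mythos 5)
Your overall strategy---both inclusions via the two elementary facts that $\tau_\zeta \subseteq \tau_\xi$ for $\zeta < \xi$ and that $d(U \cap A) \cap U = d(A) \cap U$ for $U$ open---is exactly the argument the paper has in mind (the paper gives no proof, deferring to \cite[Proposition 2.1]{MR3894041}), and your left-to-right direction is correct as written. The problem is the final step of the right-to-left direction. After correctly isolating the remaining obligation, namely that $x \in d_\xi(d_\zeta(A_i))$ must force $x \in d_\zeta(A_i)$, and after correctly warning yourself that a limit point of an open set need not belong to that set, you commit to the claim that for a $\tau_{\zeta+1}$-open set $V$ one has $d_{\zeta+1}(V)\subseteq\overline{V}^{\,\tau_{\zeta+1}} = V$. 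That is false for open sets in general: $\overline{V} = V \cup d_{\zeta+1}(V)$, so $\overline{V}=V$ is \emph{equivalent} to the statement you had just (rightly) said fails for arbitrary open sets. As written, the last step of your proof rests on a false general principle.

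The repair is short but uses a different property of $V = d_\zeta(A_i)$: not its openness in $\tau_{\zeta+1}$ but its closedness under $d_\zeta$. Since $\tau_\zeta \subseteq \tau_\xi$, every $\tau_\xi$-limit point of a set is a $\tau_\zeta$-limit point of it, so $d_\xi(E) \subseteq d_\zeta(E)$ for every $E$; hence $x \in d_\xi(d_\zeta(A_i)) \subseteq d_\zeta(d_\zeta(A_i))$. Now invoke $d_\zeta(d_\zeta(B)) \subseteq d_\zeta(B)$, i.e., that derived sets are closed under the same derivative---the fact the paper itself uses repeatedly without comment (e.g., in Corollary \ref{corollary_weak_club} and Proposition \ref{proposition_strongly_normal}). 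Be aware that this last fact is not entirely free either: it holds in $T_1$ spaces, whereas $(P_\kappa X,\tau_0)$ can fail to be $T_1$ at points $y\neq\emptyset$ with $y\cap\kappa=\emptyset$, so one should either verify it directly from the combinatorial characterization of $d_\zeta$ or restrict to the non-degenerate points; the paper glosses over the same issue. With that one substitution your proof goes through and coincides with the intended argument.
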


\begin{lemma}\label{lemma_base}
For every ordinal $\xi$, the sets of the form
\[I\cap d_\zeta(A_0)\cap\cdots\cap d_\zeta(A_{n-1})\]
where $I\in \B_0$, $n<\omega$, $\zeta<\xi$ and $A_i\subseteq P_\kappa X$ for $i<n$, form a base for $\tau_\xi$.
\end{lemma}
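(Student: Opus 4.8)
The plan is to prove the statement by induction on $\xi$, mirroring the proof of \cite[Corollary 2.2]{MR3894041} but carried out in the two-cardinal setting. Call a set \emph{$\xi$-basic} if it has the form $I \cap d_\zeta(A_0) \cap \cdots \cap d_\zeta(A_{n-1})$ with $I \in \B_0$, $n < \omega$, $\zeta < \xi$, and $A_i \subseteq P_\kappa X$; we want to show the $\xi$-basic sets form a base for $\tau_\xi$. Since every $\xi$-basic set is certainly open in $\tau_\xi$, it suffices to show that every basic open set of the already-established base from the paragraph preceding Lemma \ref{lemma_factor_out_d_xi} — namely sets of the form $I \cap d_{\xi_0}(A_0) \cap \cdots \cap d_{\xi_{n-1}}(A_{n-1})$ with the $\xi_i < \xi$ possibly distinct — contains a $\xi$-basic neighborhood of each of its points. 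Equivalently, it is enough to show that any finite intersection $d_{\xi_0}(A_0) \cap \cdots \cap d_{\xi_{n-1}}(A_{n-1})$ with all $\xi_i < \xi$ can, after intersecting with a suitable $I \in \B_0$ around a given point $x$, be replaced by a set of the form $d_\zeta(B_0) \cap \cdots \cap d_\zeta(B_{m-1})$ for a single $\zeta < \xi$.

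The key device is Lemma \ref{lemma_factor_out_d_xi}, which lets one "absorb" derived sets of lower index into a derived set of higher index. Concretely, suppose we are given $d_{\xi_0}(A_0) \cap \cdots \cap d_{\xi_{n-1}}(A_{n-1})$ with the indices listed so that $\xi_0 \le \cdots \le \xi_{n-1}$, and let $\zeta = \xi_{n-1} < \xi$. Group the factors by index; by Lemma \ref{lemma_factor_out_d_xi} applied repeatedly (from the lowest index upward), each block of factors $d_{\xi_i}(A_i)$ with $\xi_i < \zeta$ can be folded into the $d_\zeta$-factors, yielding a single finite intersection of sets $d_\zeta(B_j)$. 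The only subtlety is that Lemma \ref{lemma_factor_out_d_xi} as stated rewrites $d_\zeta(A_0) \cap \cdots \cap d_\zeta(A_{n-1}) \cap d_\xi(A_n)$ using a single $d_\zeta$-block and a single $d_\xi$-factor, so one must first consolidate the multiple $d_\zeta$-factors at the top level into the intersection and apply the lemma with the roles of "$\zeta$" and "$\xi$" played by the two largest indices appearing, then iterate downward through the finitely many distinct index values. Since there are only finitely many $\xi_i$, this terminates. Strictly this doesn't even require induction on $\xi$ — it is a direct consequence of Lemma \ref{lemma_factor_out_d_xi} together with the base from the earlier paragraph — so the cleanest write-up is a direct argument; induction on $\xi$ is available as a fallback if a limit-stage bookkeeping issue arises (at limit $\xi$, any basic open set comes from $\B_\zeta \cup \{d_{\zeta'}(A) : \zeta' < \zeta\}$ for some $\zeta < \xi$, reducing to the successor case).

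The main obstacle I anticipate is purely bookkeeping: correctly iterating Lemma \ref{lemma_factor_out_d_xi} when several distinct indices $\xi_i < \xi$ appear, making sure at each step that the hypothesis "$\zeta < \xi$" of the lemma is met (here $\zeta$ ranges over the indices strictly below the current maximum, and "$\xi$" of the lemma is that current maximum, both genuinely below our ambient $\xi$), and that after each application the result is again a finite intersection of $d_{\zeta'}$-sets for a strictly smaller set of indices. Once this reduction is in hand, intersecting with an arbitrary $I \in \B_0$ containing the point $x$ — which exists since $\B_0$ is a base for $\tau_0 \subseteq \tau_\xi$ — gives the required $\xi$-basic neighborhood, completing the proof.
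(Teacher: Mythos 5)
Your proposal is correct and is essentially the paper's own argument: the paper explicitly defers the proof to the method of \cite[Proposition 2.1 and Corollary 2.2]{MR3894041}, which is precisely the absorption of lower-index derived sets into the maximal index $\zeta=\max_i\xi_i$ via repeated application of Lemma \ref{lemma_factor_out_d_xi}, starting from the already-established base of sets $I\cap d_{\xi_0}(A_0)\cap\cdots\cap d_{\xi_{n-1}}(A_{n-1})$. The bookkeeping you flag does go through (combine factors two indices at a time, from the bottom up), so no gap remains.
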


In the next few sections, we will characterize the non-isolated points of the spaces $(P_\kappa X,\tau_\xi)$ in terms of the following two-cardinal notions of $\xi$-s-strong stationarity.

\begin{definition}\label{definition_xi_s_stationary}
\begin{enumerate}
\item For $A\subseteq P_\kappa X$ and $x\in P_\kappa X$, we say that $A$ is \emph{$0$-strongly stationary in $P_{\kappa_x}x$} if and only if $A$ is $\strsub$-cofinal in $P_{\kappa_x}x$. For an ordinal $\xi>0$, we say that $A$ is $\xi$-strongly stationary in $P_{\kappa_x}x$ if and only if $\kappa_x$ is a limit cardinal\footnote{The requirement that $\kappa_x$ be a limit cardinal in order for $A$ to be $\xi$-strongly stationary in $P_{\kappa_x}x$ is necessary because otherwise, when $\kappa_x$ is a successor ordinal there are \emph{no} $0$-strongly stationary subsets of $P_{\kappa_x}x$ and hence \emph{every} subset of $P_{\kappa_x}x$ would be $1$-strongly stationary.} and, whenever $\zeta < \xi$ and $S\subseteq P_{\kappa_x}x$ is $\zeta$-strongly stationary in $P_{\kappa_x}x$, there is some $y\in A\cap P_{\kappa_x}x$ such that $S$ is $\zeta$-strongly stationary in $P_{\kappa_y}y$.
\item A set $C\subseteq P_\kappa X$ is called \emph{$0$-weak club in $P_{\kappa_x}x$} if and only if it is $\strsub$-cofinal and $\strsub$-closed in $P_{\kappa_x}x$. For an ordinal $\xi>0$, we say that $C$ is \emph{$\xi$-weak club in $P_{\kappa_x}x$} if and only if it is $\xi$-strongly stationary in $P_{\kappa_x}x$ and it is \emph{$\xi$-strongly stationary closed in $P_{\kappa_x}x$}, meaning that whenever $y \strsub x$ and $C$ is $\xi$-strongly stationary in $P_{\kappa_y}y$ we have $y\in C$.
\item We say that $A$ is \emph{$0$-s-strongly stationary in $P_{\kappa_x}x$} if and only if $A$ is $\strsub$-cofinal in $P_{\kappa_x}x$. For an ordinal $\xi>0$, we say that $A$ is \emph{$\xi$-s-strongly stationary in $P_{\kappa_x}x$} if and only if $\kappa_x$ is a limit ordinal and, whenever $\zeta<\xi$, $\kappa_x$ and $S,T\subseteq P_\kappa X$ are $\zeta$-s-strongly stationary in $P_{\kappa_x}x$ there is some $y\in A\cap P_{\kappa_x}x$ such that $S$ and $T$ are both $\zeta$-s-strongly stationary in $P_{\kappa_y}y$.
\item A set $C\subseteq P_\kappa X$ is called \emph{$0$-s-weak club in $P_{\kappa_x}x$} if and only if it is $0$-s-strongly stationary in $P_{\kappa_x}x$ and whenever $y \prec x$ and $C$ is $0$-s-strongly stationary in $P_{\kappa_y}y$ we have $y\in C$. For an ordinal $\xi>0$, we say that $C$ is \emph{$\xi$-s-weak club in $P_{\kappa_x}x$} if and only if it is $\xi$-s-strongly stationary in $P_{\kappa_x}x$ and it is \emph{$\xi$-s-closed in $P_{\kappa_x}x$}, meaning that whenever $y<x$ and $C$ is $\xi$-s-strongly stationary in $P_{\kappa_y}y$ we have $y\in C$.
\end{enumerate}
\end{definition}

In what follows, given $x \in P_\kappa X$ and $\xi < \kappa$, we will 
simply say that, e.g., $P_{\kappa_x}x$ is $\xi$-s-strongly stationary to mean 
that it is $\xi$-s-strongly stationary \emph{in $P_{\kappa_x} x$}.
Let us first note the following simple proposition, which justifies 
the restriction of our attention to values of $\xi$ less than $\kappa$.
By the results of Subsection \ref{section_indescribability}, 
the proposition is sharp, at least assuming the consistency of 
certain large cardinals.

\begin{proposition}\label{proposition_kappa_x_plus_1}
    For all $x \in P_\kappa X$, $P_{\kappa_x} x$ is not 
    $(\kappa_x+1)$-strongly stationary.
\end{proposition}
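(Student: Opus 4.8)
The plan is to argue by induction on the cardinal $\kappa_x$. When $\kappa_x$ is not a limit cardinal there is nothing to prove: by Definition~\ref{definition_xi_s_stationary}(1), for any ordinal $\xi>0$ a set can only be $\xi$-strongly stationary in $P_{\kappa_x}x$ if $\kappa_x$ is a limit cardinal, and $\kappa_x+1>0$. So the substantive case is that in which $\lambda:=\kappa_x$ is a limit cardinal, and here the inductive hypothesis reads: for every $y\in P_\kappa X$ with $\kappa_y<\lambda$, the set $P_{\kappa_y}y$ is not $(\kappa_y+1)$-strongly stationary in $P_{\kappa_y}y$. (Note that whenever $y\in P_\lambda x$ we have $\kappa_y\le|y|<\lambda$, so the hypothesis is applicable to all relevant $y$.)

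First I would isolate a \emph{downward monotonicity} observation: if $1\le\zeta\le\xi$ and $A$ is $\xi$-strongly stationary in $P_{\kappa_w}w$, then $A$ is $\zeta$-strongly stationary in $P_{\kappa_w}w$. This is immediate from Definition~\ref{definition_xi_s_stationary}(1), since the side condition that $\kappa_w$ be a limit cardinal is common to both indices, and the reflection requirement at index $\zeta$ ranges over a subcollection (the ordinals $\eta<\zeta$) of the ordinals $\eta<\xi$ that appear at index $\xi$.

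For the main argument, suppose toward a contradiction that $P_\lambda x$ is $(\lambda+1)$-strongly stationary in $P_\lambda x$. By downward monotonicity (with $1\le\lambda\le\lambda+1$), $P_\lambda x$ is already $\lambda$-strongly stationary in $P_\lambda x$. Applying the defining clause of $(\lambda+1)$-strong stationarity with the ordinal $\zeta:=\lambda$ --- permissible since $\lambda<\lambda+1$ --- and the set $S:=P_\lambda x$, which is $\lambda$-strongly stationary in $P_\lambda x$, we obtain some $y\in P_\lambda x$ such that $S$ is $\lambda$-strongly stationary in $P_{\kappa_y}y$. Since $y\subseteq x$ and $\kappa_y\le|y|<\lambda$, every member of $P_{\kappa_y}y$ is a subset of $x$ of cardinality less than $\lambda$ and hence belongs to $P_\lambda x$; therefore $S\cap P_{\kappa_y}y=P_{\kappa_y}y$, so in fact $P_{\kappa_y}y$ is $\lambda$-strongly stationary in $P_{\kappa_y}y$. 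On the other hand $\kappa_y<\lambda$, so the inductive hypothesis gives that $P_{\kappa_y}y$ is not $(\kappa_y+1)$-strongly stationary in $P_{\kappa_y}y$; and since $\lambda$ is a limit ordinal with $\kappa_y<\lambda$ we have $1\le\kappa_y+1\le\lambda$, so a second application of downward monotonicity shows $P_{\kappa_y}y$ is not $\lambda$-strongly stationary in $P_{\kappa_y}y$. This contradiction completes the induction.

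I do not expect a genuine obstacle here; the proposition is a short diagonalization against the rank map $y\mapsto\kappa_y$ on $P_\lambda x$, organized by induction on $\kappa_x$. The only point that needs to be stated with care is the standing convention that, for $S\subseteq P_{\kappa_x}x$ and $y\in P_{\kappa_x}x$, the phrase ``$S$ is $\zeta$-strongly stationary in $P_{\kappa_y}y$'' means ``$S\cap P_{\kappa_y}y$ is $\zeta$-strongly stationary in $P_{\kappa_y}y$'' (as is forced already at level $\zeta=0$ by the meaning of ``$\strsub$-cofinal in $P_{\kappa_y}y$''); it is exactly this convention that makes the identity $S\cap P_{\kappa_y}y=P_{\kappa_y}y$ the relevant computation above.
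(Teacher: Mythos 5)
Your proof is correct and is essentially the paper's argument: both reduce to the reflection clause applied to $S=P_{\kappa_x}x$ itself at level $\zeta=\kappa_x$, use downward monotonicity in the index, and conclude that the reflecting point $y$ yields a smaller counterexample. The paper phrases this as a minimal-counterexample argument rather than an explicit induction on $\kappa_x$, but the content is identical.
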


\begin{proof}
    Suppose otherwise, and let $x \in P_\kappa X$ be a counterexample 
    such that $\kappa_x$ is minimal among all counterexamples. Since 
    $P_{\kappa_x} x$ is $(\kappa_x+1)$-strongly stationary, it is 
    \emph{a fortiori} $\kappa_x$-strongly stationary. Therefore, by 
    the definition of $(\kappa_x+1)$-strongly stationary, we can find 
    $y \in P_{\kappa_x} x$ such that $P_{\kappa_x} x$ is 
    $\kappa_x$-strongly stationary in $P_{\kappa_y} y$. Since 
    $\kappa_x > \kappa_y$, this implies that $P_{\kappa_y} y$ is 
    $(\kappa_y + 1)$-strongly stationary, contradicting the minimality 
    of $\kappa_x$.
\end{proof}
Considering the previous proposition, it is natural to wonder whether the definitions of $\xi$-strong stationarity and $\xi$-s-strong stationarity can be modified using canonical functions to allow for settings in which some $x\in P_\kappa X$ can be $\xi$-strongly stationary for $\kappa_x<\xi<|x|^+$; this was done in the cardinal setting by the first author in \cite{CodyHigherIndescribability}. See the discussion before Question \ref{question_higher1} and Question \ref{question_higher2} for more information.



Definition \ref{definition_xi_s_stationary} leads naturally to the definition of the following ideals, which can be strongly normal under a certain large cardinal hypothesis by Proposition \ref{proposition_strongly_normal}.

\begin{definition}\label{definition_xi_s_ideal}
Suppose that $x \in P_\kappa X$. We define 
\[\overline{\NS}_{\kappa_x,x}^\xi=\{A\subseteq P_\kappa X\st \textrm{$A$ is not $\xi$-strongly stationary in $P_{\kappa_x} x$}\}\]
and
\[\NS_{\kappa_x,x}^\xi=\{A\subseteq P_\kappa X\st \textrm{$A$ is not $\xi$-s-strongly stationary in $P_{\kappa_x} x$}\}.\]
\end{definition}

Let us show that for the $x$'s in $P_\kappa X$ that we will care most about, namely those for which $\kappa_x$ is regular, $1$-strong stationarity and $1$-s-strong stationarity are equivalent in $P_{\kappa_x}x$; moreover, if $\kappa_x$ is inaccessible, then these notions are equivalent to strong stationarity in $P_{\kappa_x}x$ plus the Mahloness of $\kappa_x$.

\begin{proposition}\label{proposition_strong_vs_1_s_stationarity}
Suppose $A \subseteq P_\kappa X$ and $x\in P_\kappa X$ with $\kappa_x$ regular. Then the following are equivalent, and both imply that 
$\kappa_x$ is weakly Mahlo.
\begin{enumerate}
    \item $A$ is $1$-strongly stationary in $P_{\kappa_x}x$.
    \item $A$ is $1$-s-strongly stationary in $P_{\kappa_x}x$.
\end{enumerate}
If, moreover, $\kappa_x$ is strongly inaccessible, then these two statements 
are also equivalent to the following:
\begin{enumerate}
    \setcounter{enumi}{2}
    \item $\kappa_x$ is Mahlo and $A$ is strongly stationary in $P_{\kappa_x}x$.
\end{enumerate}
\end{proposition}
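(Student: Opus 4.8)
The plan is to prove the equivalences and the weakly Mahlo conclusion by unwinding the definitions, using the characterization of $d_0$ from Proposition in Section \ref{section_generalizing_the_order_topology} and the ``Fact'' on filter bases for $\NSS_{\kappa,X}^*$. Throughout, work inside the ambient space $P_{\kappa_x} x$ (so that ``$\prec$-cofinal'' means ``$0$-strongly stationary'' there), and recall that $\kappa_x$ regular is a standing hypothesis.

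\textbf{(1) $\Leftrightarrow$ (2).} By definition, $A$ is $1$-strongly stationary in $P_{\kappa_x} x$ iff $\kappa_x$ is a limit cardinal and every $0$-strongly stationary $S \subseteq P_{\kappa_x} x$ reflects, i.e.\ there is $y \in A \cap P_{\kappa_x} x$ with $S$ $\prec$-cofinal in $P_{\kappa_y} y$; and $A$ is $1$-s-strongly stationary iff $\kappa_x$ is a limit cardinal and for every \emph{pair} $S, T$ of $0$-s-strongly stationary ($= \prec$-cofinal) subsets there is $y \in A \cap P_{\kappa_x} x$ in which both $S$ and $T$ remain $\prec$-cofinal. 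The direction (2)$\Rightarrow$(1) is immediate by taking $T = S$. For (1)$\Rightarrow$(2), given $\prec$-cofinal $S, T \subseteq P_{\kappa_x} x$, I would apply $1$-strong stationarity to a single $\prec$-cofinal set built from $S$ and $T$; the natural candidate is the set of $z$ that have a $\prec$-predecessor from $S$ and one from $T$, or more precisely I would use the derived-set identity $d_0(S) \cap d_0(T) = d_0(d_0(S) \cap T)$ (Lemma \ref{lemma_factor_out_d_xi} with $\zeta = 0$) together with the fact that $d_0(S) \cap d_0(T)$ is $\prec$-cofinal in $P_{\kappa_x} x$ --- this last cofinality is exactly Proposition \ref{proposition_reflecting_cofinality}-style reasoning, but iterated, and is where one needs $\kappa_x$ weakly Mahlo rather than merely weakly inaccessible. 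So the right order is: first show (1) (or (2)) implies $\kappa_x$ weakly Mahlo, then use Proposition \ref{proposition_trace_of_a_cofinal_set} (applied inside $P_{\kappa_x} x$) to get that $d_0(S) \cap d_0(T)$ is a weak club, hence $\prec$-cofinal; feeding this set into (1) and chasing back through the definition of $d_0$ yields a single $y \in A \cap P_{\kappa_x} x$ witnessing $1$-s-strong stationarity.

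\textbf{Both imply $\kappa_x$ weakly Mahlo.} Suppose $A$ is $1$-strongly stationary in $P_{\kappa_x} x$; then $\kappa_x$ is a limit cardinal, and since it is regular by hypothesis, it is weakly inaccessible. If it failed to be weakly Mahlo, then by the failure of Proposition \ref{proposition_reflecting_cofinality}(2) (applied to the space $P_{\kappa_x} x$) there is a $\prec$-cofinal $S \subseteq P_{\kappa_x} x$ that is not $\prec$-cofinal in any $P_{\kappa_y} y$ for $y \in P_{\kappa_x} x$ --- contradicting $1$-strong stationarity, which in particular demands reflection of this $S$ to some $y \in A \cap P_{\kappa_x} x$. (Concretely, reuse the club $C \subseteq \kappa_x$ of singular cardinals and $D = \{z \in P_{\kappa_x}x : \kappa_z \in C\}$ from the proof of Proposition \ref{proposition_reflecting_cofinality}.)

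\textbf{(1) $\Leftrightarrow$ (3) when $\kappa_x$ is strongly inaccessible.} Here $\kappa_x$ strongly inaccessible means $\kappa_x^{<\kappa_x} = \kappa_x$, so by Carr--Levinski--Pelletier, $\NSS_{\kappa_x, x}$ is a (the minimal strongly normal) ideal exactly when $\kappa_x$ is Mahlo. For (3)$\Rightarrow$(1): assume $\kappa_x$ Mahlo and $A$ strongly stationary in $P_{\kappa_x}x$. Given $\prec$-cofinal $S \subseteq P_{\kappa_x}x$, by Proposition \ref{proposition_trace_of_a_cofinal_set} $d_0(S)$ is a weak club in $P_{\kappa_x}x$, hence (by the Fact) it is in $\NSS_{\kappa_x,x}^*$, so $A \cap d_0(S) \neq \emptyset$; any $y$ in this intersection lies in $A \cap P_{\kappa_x}x$ and satisfies $S$ $\prec$-cofinal in $P_{\kappa_y}y$, giving $1$-strong stationarity (the $\kappa_x$ limit cardinal clause is clear). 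For (1)$\Rightarrow$(3): $1$-strong stationarity gives $\kappa_x$ weakly Mahlo, which with strong inaccessibility gives $\kappa_x$ Mahlo. For strong stationarity of $A$, take any $f : P_{\kappa_x}x \to P_{\kappa_x}x$; the set $C_f$ (from the Fact, formed inside $P_{\kappa_x}x$) is a weak club, so it is $\prec$-closed and $\prec$-cofinal; then $d_0(C_f) \supseteq C_f$ by $\prec$-closure... more carefully, I would show directly that the weak club $C_f$, being $\prec$-cofinal in $P_{\kappa_x}x$, reflects via (1) to some $y \in A \cap P_{\kappa_x}x$ with $C_f$ $\prec$-cofinal in $P_{\kappa_y}y$; since $C_f$ is $\prec$-closed, this forces $y \in C_f$, i.e.\ $y \in A \cap C_f$, and as the $C_f$ (equivalently the $B_f$) generate $\NSS_{\kappa_x,x}^*$, this shows $A$ meets every $B_f$, i.e.\ $A$ is strongly stationary in $P_{\kappa_x}x$.

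\textbf{Main obstacle.} The delicate point is the (1)$\Rightarrow$(2) direction: combining a \emph{pair} of $\prec$-cofinal sets into a single one whose reflection is genuinely a simultaneous reflection of both. The identity $d_0(S) \cap d_0(T) = d_0(d_0(S) \cap T)$ from Lemma \ref{lemma_factor_out_d_xi} handles the bookkeeping, but one must check that $d_0(S) \cap d_0(T)$ is still $\prec$-cofinal in $P_{\kappa_x}x$ --- equivalently that $\prec$-cofinal sets are closed under finite intersection modulo the weak club filter --- and verifying this needs $\kappa_x$ weakly Mahlo (via Proposition \ref{proposition_trace_of_a_cofinal_set}), so the weakly Mahlo conclusion has to be established \emph{first} and then fed back in. Everything else is routine unwinding of the definitions of $d_0$, weak club, and strong stationarity, together with the Carr--Levinski--Pelletier characterization and the Fact.
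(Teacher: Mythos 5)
Your proposal is correct and follows essentially the same route as the paper: the trivial direction by taking $S=T$, weak Mahloness extracted via Proposition \ref{proposition_reflecting_cofinality}, simultaneous reflection obtained by first showing $d_0(S)\cap d_0(T)$ is $\prec$-cofinal in $P_{\kappa_x}x$ and then applying $1$-strong stationarity to that single set, and the weak-club reflection argument for clause (3). The only point to nail down is that the $\prec$-cofinality of $d_0(S)\cap d_0(T)$ under weak Mahloness is not quite a black-box citation of Proposition \ref{proposition_trace_of_a_cofinal_set} (which handles one set at a time); it requires the alternating $\prec$-increasing chain of length $\kappa_x$ closing off at a regular fixed point, which is exactly the construction the paper carries out directly, and you have correctly identified this as the delicate step; you also make explicit the direction (3)$\Rightarrow$(1), which the paper leaves implicit.
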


\begin{proof}
    Note that, if $A$ is $1$-strongly stationary in $P_{\kappa_x}x$, 
    then $\kappa_x$ is a limit cardinal and hence weakly inaccessible. 
    We can thus assume that this is the case.
    (2) $\implies$ (1) is trivial. Let us now assume that $A$ is 
    $1$-strongly stationary in $P_{\kappa_x} x$. By Proposition \ref{proposition_reflecting_cofinality}, it follows that $\kappa_x$ is weakly Mahlo. To see that $A$ is $1$-s-strongly stationary in 
    $P_{\kappa_x}x$, fix sets $S_0,S_1 \subseteq P_\kappa X$ that are both 
    $\prec$-cofinal in $P_{\kappa_x} x$. Let $T$ be the set of 
    $y \in P_{\kappa_x} x$ such that $S_0$ and $S_1$ are both $\prec$-cofinal 
    in $P_{\kappa_y} y$. We claim that $T$ is $\prec$-cofinal in 
    $P_{\kappa_x} x$. To see this, fix an arbitrary $y_0 \in P_{\kappa_x} x$.
    Define a continuous, $\prec$-increasing sequence $\langle y_\eta \mid 
    \eta < \kappa_x \rangle$ in $P_{\kappa_x} x$ as follows. The set 
    $y_0$ is already fixed. Given $y_\eta$, find $z^0_\eta \in S_0$ 
    and $z^1_\eta \in S_1$ such that, for all $i < 2$, we have $y_\eta 
    \prec z^i_\eta \prec x$. Then let $y_{\eta + 1} = z^0_\eta \cup z^1_\eta$. 
    If $\xi < \kappa_x$ is a limit ordinal, let $y_\xi = \bigcup \{y_\eta 
    \mid \eta < \xi\}$. The set of $\eta < \kappa_x$ for which  
    $\kappa_{y_\eta} = \eta$ is club in $\kappa_x$, so, since $\kappa_x$ 
    is weakly Mahlo, we can fix some regular cardinal $\eta < \kappa_x$ 
    such that $\kappa_{y_\eta} = \eta$. A now-familiar argument then shows 
    that $S_0$ and $S_1$ are both $\prec$-cofinal in $y_\eta$, and hence 
    $y_\eta \in T$.

    Since $A$ is $1$-strongly stationary in $P_{\kappa_x}x$, we can find 
    $w \in A$ such that $T$ is $\prec$-cofinal in $P_{\kappa_w}w$. It follows 
    immediately that $S_0$ and $S_1$ are both $\prec$-cofinal in $P_{\kappa_w} 
    w$; therefore, $A$ is $1$-s-strongly stationary in $P_{\kappa_x} x$.

    For the ``moreover" clause, assume that $\kappa_x$ is strongly 
    inaccessible and $A$ is $1$-strongly stationary in $P_{\kappa_x} x$. 
    The fact that $\kappa_x$ is Mahlo follows from the previous paragraphs. 
    To show that $A$ is strongly stationary in $P_{\kappa_x} x$, suppose $C$ is a weak club subset of $P_{\kappa_x}x$. Since $A$ is $1$-strongly stationary there is some $y\in A$ such that $C$ is $\strsub$-cofinal in $P_{\kappa_y}y$. Since $C$ is weakly closed we have $y\in A\cap C$.
\end{proof}

\subsection{The $\tau_1$ topology on $P_\kappa X$}

We now discuss the \emph{first derived topology $\tau_1$ on $P_\kappa X$}. Recall that this is the topology generated by 
\[\B_1=\B_0\cup\{d_0(A)\st A\subseteq P_\kappa X\}.\]

\begin{remark}\label{remark_fix}
By definition $\B_1$ is a subbase for the first derived topology $\tau_1$ on $P_\kappa X$, but it is not clear whether it is a base for $\tau_1$ (essentially because of Proposition \ref{proposition_reflecting_cofinality}). Recall that the subbase for the first derived topology on an ordinal $\delta$ is always a base for that topology (see \cite{MR3894041}). This difference seems not to create too much difficulty so we proceed with our definition as is, but in Section \ref{section_variation} we show that, if we pass to 
a certain club subset $C$ of $P_\kappa X$, then the natural restriction 
of $\B_1$ to $C$ is a base for the subspace topology on $C$ induced by 
$\tau_1$.
\end{remark}

We will need the following lemma.

\begin{lemma}\label{lemma_2_to_n_for_1_stationarity}
Fix $x \in P_\kappa X$, and 
suppose that $A$ is $1$-s-strongly stationary in $P_{\kappa_x}x$ and $A_0,\ldots,A_{n-1}$ are all $0$-s-strongly stationary (i.e.\ $\strsub$-cofinal) in $P_{\kappa_x}x$, where $n\geq 2$. Then $d_0(A_0)\cap\cdots\cap d_0(A_{n-1})\cap A$ is $1$-s-strongly stationary in $P_{\kappa_x}x$.
\end{lemma}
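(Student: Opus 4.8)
The plan is to reduce the statement to the $n=2$ case first and then handle that case by a closure-point construction of the now-familiar type. For the reduction, note that by Lemma~\ref{lemma_factor_out_d_xi} (with $\zeta = 0$, $\xi = 1$) we have
\[
d_0(A_0)\cap\cdots\cap d_0(A_{n-1})\cap A = d_1\big(d_0(A_1)\cap\cdots\cap d_0(A_{n-1})\cap A\big)
\]
only if we already know that the inner intersection captures $1$-s-strong stationarity, so instead I would argue directly: it suffices to treat $n=2$, since if $A_0,\dots,A_{n-1}$ are all $\strsub$-cofinal in $P_{\kappa_x}x$ then so is $A_0\cap\cdots\cap A_{n-1}$ provided we first replace the family by a single set obtained via a closure argument. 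Concretely, one shows $A' := d_0(A_1)\cap\cdots\cap d_0(A_{n-1})\cap A$ is $1$-s-strongly stationary in $P_{\kappa_x}x$ (by induction on $n$ using the $n=2$ case applied to $A$ and the pair $A_1,\dots$, grouped two at a time, absorbing the already-proven $d_0$ intersections using Lemma~\ref{lemma_factor_out_d_xi}), and then apply the $n=2$ case once more to $A'$ and $A_0$.

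For the base case $n=2$: fix $S,T\subseteq P_\kappa X$ that are $\strsub$-cofinal in $P_{\kappa_x}x$ (recall $\kappa_x$ is a limit cardinal since $A$ is $1$-s-strongly stationary). We must find $w\in d_0(A_0)\cap d_0(A_1)\cap A$ with $w\in P_{\kappa_x}x$ such that $S$ and $T$ are both $\strsub$-cofinal in $P_{\kappa_w}w$. First, by Proposition~\ref{proposition_strong_vs_1_s_stationarity} (noting that a set can be $1$-s-strongly stationary in $P_{\kappa_x}x$ only if $\kappa_x$ is weakly Mahlo; this also follows from the argument of Proposition~\ref{proposition_reflecting_cofinality}), I know $\kappa_x$ is weakly Mahlo. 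I then build a continuous $\strsub$-increasing chain $\langle w_\eta \mid \eta < \kappa_x\rangle$ in $P_{\kappa_x}x$: at successor stages, given $w_\eta$, I choose elements of $A_0$, $A_1$, $S$, and $T$ each $\strsub$-above $w_\eta$ (inside $P_{\kappa_x}x$) and let $w_{\eta+1}$ be their union; at limits I take unions. The set $E = \{\eta < \kappa_x \mid \kappa_{w_\eta} = \eta\}$ is club in $\kappa_x$, so by weak Mahloness I fix a regular $\eta\in E$. The standard argument (as in the proof of Proposition~\ref{proposition_strong_vs_1_s_stationarity}, using that $\kappa_{w_\eta} = \eta$ is regular, so every $a \in P_{\kappa_{w_\eta}} w_\eta$ lies in some $P_{\kappa_{w_\zeta}}w_\zeta$ with $\zeta < \eta$) shows $A_0$, $A_1$, $S$, $T$ are all $\strsub$-cofinal in $P_{\kappa_{w_\eta}}w_\eta$; that is, $w_\eta \in d_0(A_0)\cap d_0(A_1)$ and $S,T$ are $\strsub$-cofinal in $P_{\kappa_{w_\eta}}w_\eta$. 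Let $T^*$ be the set of all such $w_\eta$-like points, i.e.\ $T^* = \{y \in P_{\kappa_x}x \mid A_0, A_1, S, T \text{ are all } \strsub\text{-cofinal in } P_{\kappa_y}y\}$; the chain construction, carried out starting above an arbitrary $y_0 \in P_{\kappa_x}x$, shows $T^*$ is $\strsub$-cofinal in $P_{\kappa_x}x$, i.e.\ $T^*$ is $0$-s-strongly stationary in $P_{\kappa_x}x$.

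Finally, since $A$ is $1$-s-strongly stationary in $P_{\kappa_x}x$, applying its defining property to the pair $T^*, T^*$ (or to $T^*$ and any fixed $\strsub$-cofinal set) yields $w \in A\cap P_{\kappa_x}x$ such that $T^*$ is $\strsub$-cofinal in $P_{\kappa_w}w$. It remains to check that this $w$ works: since $T^* \subseteq d_0(A_0)\cap d_0(A_1)$, the $\strsub$-cofinality of $T^*$ in $P_{\kappa_w}w$ gives, via the $\strsub$-closure behavior of $d_0$ (every point $\strsub$-below $w$ lies $\strsub$-below some point of $T^* \subseteq d_0(A_i)$, hence $A_i$ is $\strsub$-cofinal below it, so $A_i$ is $\strsub$-cofinal in $P_{\kappa_w}w$ too), that $w \in d_0(A_0)\cap d_0(A_1)$; and the same cofinality argument applied with $S,T$ in place of the $A_i$ shows $S$ and $T$ are $\strsub$-cofinal in $P_{\kappa_w}w$. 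Thus $w \in d_0(A_0)\cap d_0(A_1)\cap A$ witnesses the $1$-s-strong stationarity of that intersection for the pair $S,T$, as required.

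The main obstacle I anticipate is bookkeeping in the reduction from general $n$ to $n=2$: one must be careful that grouping the $d_0(A_i)$'s and applying Lemma~\ref{lemma_factor_out_d_xi} is legitimate (the lemma lets us move $d_0$-factors through $d_1$, but here we are working with $1$-s-strong stationarity, not literally with $d_1$, so the induction has to be phrased in terms of the stationarity notion directly and then translated back via the characterizations). It is cleanest to prove the $n=2$ statement first and then note that it immediately upgrades to general $n$: if $d_0(A_1)\cap\cdots\cap d_0(A_{n-1})\cap A$ is $1$-s-strongly stationary in $P_{\kappa_x}x$ (by induction hypothesis, using that $\strsub$-cofinality is preserved under the constructions above) then one more application of the $n=2$ case with $A_0$ finishes it. All the other steps are routine closure-point arguments of exactly the kind already appearing in Propositions~\ref{proposition_trace_of_a_cofinal_set} and~\ref{proposition_strong_vs_1_s_stationarity}.
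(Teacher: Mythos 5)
Your core construction---the closure-point chain producing the set $T^*$ of points below which $A_0$, $A_1$, $S$, $T$ are all simultaneously $\strsub$-cofinal, followed by one application of the $1$-s-strong stationarity of $A$ to the pair $T^*,T^*$, and the final check that the resulting $w$ lies in $d_0(A_0)\cap d_0(A_1)$ and reflects $S$ and $T$---is a valid argument \emph{provided} $\kappa_x$ is weakly Mahlo. The problem is your very first step: you assert that $A$ being $1$-s-strongly stationary in $P_{\kappa_x}x$ already forces $\kappa_x$ to be weakly Mahlo, citing Proposition \ref{proposition_strong_vs_1_s_stationarity}. That proposition carries the standing hypothesis that $\kappa_x$ is \emph{regular}, which is not a hypothesis of Lemma \ref{lemma_2_to_n_for_1_stationarity}; the lemma is stated for arbitrary $x\in P_\kappa X$, and the definition of $1$-s-strong stationarity only forces $\kappa_x$ to be a limit cardinal. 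Neither Proposition \ref{proposition_strong_vs_1_s_stationarity} nor the argument of Proposition \ref{proposition_reflecting_cofinality} (whose non-reflecting cofinal set $D=\{y\st y\cap\kappa\in C\}$ does not transfer to $P_{\kappa_x}x$ when $x\cap\kappa$ is not an ordinal and $\kappa_x$ is singular) rules out $1$-s-strong stationarity at a singular limit cardinal $\kappa_x$. So your chain of length $\kappa_x$ with a regular closure point is unavailable in that case, and the case is neither handled nor shown to be vacuous. This is a genuine gap.

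The intended proof sidesteps all of this. To see that $d_0(A_0)\cap d_0(A_1)\cap A$ is $\strsub$-cofinal in $P_{\kappa_x}x$, given $y\strsub x$ one applies the defining property of $1$-s-strong stationarity of $A$ directly to the pair of $\strsub$-cofinal sets $A_0\cap(y,x)$ and $A_1\cap(y,x)$, obtaining $a\in A\cap(y,x)$ in which both (hence $A_0$ and $A_1$) are $\strsub$-cofinal; one then bootstraps to $n$ sets by an easy induction and finishes by running the same cofinality statement with $d_0(S)$ and $d_0(T)$ appended to the list. No regularity or Mahloness of $\kappa_x$ and no chain construction are needed. To salvage your route you would first have to prove that $1$-s-strong stationarity in $P_{\kappa_x}x$ implies $\kappa_x$ is regular, which the paper does not do and which does not appear routine. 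Separately, your reduction from general $n$ to $n=2$ is workable as finally stated (absorb one $d_0(A_i)$ at a time using the $n=2$ case with a repeated set), but the displayed appeal to Lemma \ref{lemma_factor_out_d_xi} in your first paragraph is not a correct application of that lemma and should simply be deleted.
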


\begin{proof}
First let us use a straightforward inductive argument on $n\geq 2$ to show that whenever $A_0,\ldots,A_{n-1}$ are $0$-strongly stationary in $P_{\kappa_x}x$, the set $d_0(A_0)\cap\cdots\cap d_0(A_{n-1})\cap A$ is $0$-strongly stationary in $P_{\kappa_x}x$. Suppose $A_0$ and $A_1$ are $\strsub$-cofinal in $P_{\kappa_x}x$ and note that $\kappa_x$ must be a limit cardinal. To show that $d_0(A_0)\cap d_0(A_1)\cap A$ is $\strsub$-cofinal in $P_{\kappa_x}x$, fix $y\in P_{\kappa_x}x$. Then $A_0\cap (y,x)$ and $A_1\cap (y,x)$ are $\strsub$-cofinal in $P_{\kappa_x}x$. Since $A$ is $1$-s-strongly stationary in $P_{\kappa_x}x$ there is an $a\in A\cap P_{\kappa_x}x$ such that $A_0\cap (y,x)$ and $A_1\cap (y,x)$ are both $\strsub$-cofinal in $P_{\kappa_a}a$, and hence $y<a$. Therefore $a\in d_0(A_0)\cap d_0(A_1)\cap A\cap (y,x)$. Now suppose the result holds for $n$, and suppose $A_0,\ldots,A_{n-1},A_n$ are all $\strsub$-cofinal in $P_{\kappa_x}x$. By our inductive hypothesis, $d_0(A_0)\cap\cdots\cap d_0(A_{n-1})$ is $\strsub$-cofinal in $P_{\kappa_x}x$, and by the base case the set $d_0(d_0(A_0)\cap\cdots\cap d_0(A_{n-1}))\cap d_0(A_n)\cap A$ is $\strsub$-cofinal in $P_{\kappa_x}x$. But 
\begin{align*}
    d_0(d_0(A_0)\cap\cdots\cap d_0(A_{n-1}))\cap d_0(A_n) &\subseteq d_0(d_0(A_0))\cap \cdots\cap d_0(d_0(A_{n-1}))\cap d_0(A_n) \\
    &\subseteq d_0(A_0)\cap \cdots\cap d_0(A_{n-1})\cap d_0(A_n).
\end{align*}

Now we prove the statement of the lemma. Fix sets $A_0,\ldots,A_{n-1}\subseteq P_\kappa X$ that are $\strsub$-cofinal in $P_{\kappa_x}x$. To show that $d_0(A_0)\cap\cdots\cap d_0(A_{n-1})\cap A$ is $1$-s-strongly stationary in $P_{\kappa_x}x$, fix sets $S$ and $T$ that are $\strsub$-cofinal in $P_{\kappa_x}x$. By the previous paragraph, it follows, that the set
\[d_0(S)\cap d_0(T)\cap d_0(A_0)\cap\cdots\cap d_0(A_{n-1})\cap A\]
is $\strsub$-cofinal in $P_{\kappa_x}x$ and hence there is some
\[y\in d_0(S)\cap d_0(T)\cap d_0(A_0)\cap\cdots\cap d_0(A_{n-1})\cap A,\]
which establishes that $d_0(A_0)\cap\cdots\cap d_0(A_{n-1})\cap A$ is $1$-s-strongly stationary.
\end{proof}

\begin{corollary}
Suppose $P_{\kappa_x}x$ is $1$-s-strongly stationary. Then a set $A$ is $1$-s-strongly stationary in $P_{\kappa_x}x$ if and only if for all sets $C$ which are $0$-s-weak club in $P_{\kappa_x}x$ we have $A\cap C\cap P_{\kappa_x}x\neq\emptyset$.
\end{corollary}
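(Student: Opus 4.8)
The plan is to prove the two directions separately, using Lemma~\ref{lemma_2_to_n_for_1_stationarity} and the base characterization from Lemma~\ref{lemma_base} for the space $(P_{\kappa_x}x, \tau_1)$ restricted to the ambient space $P_{\kappa_x}x$. Throughout, note that since $P_{\kappa_x}x$ is $1$-s-strongly stationary, $\kappa_x$ is a limit cardinal, and in fact (by the proof strategy of Proposition~\ref{proposition_strong_vs_1_s_stationarity}, or directly) $\kappa_x$ is weakly Mahlo, which guarantees the existence of $0$-s-weak clubs via Proposition~\ref{proposition_trace_of_a_cofinal_set}.

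First I would handle the forward direction. Suppose $A$ is $1$-s-strongly stationary in $P_{\kappa_x}x$, and let $C$ be $0$-s-weak club in $P_{\kappa_x}x$. By definition of $0$-s-weak club, $C$ is $0$-s-strongly stationary, i.e. $\strsub$-cofinal, in $P_{\kappa_x}x$. Applying the definition of $1$-s-strong stationarity with $S = T = C$, we obtain $y \in A \cap P_{\kappa_x}x$ such that $C$ is $\strsub$-cofinal in $P_{\kappa_y}y$, i.e. $C$ is $0$-s-strongly stationary in $P_{\kappa_y}y$. Since $C$ is $0$-s-weak club in $P_{\kappa_x}x$, by the closure clause we get $y \in C$. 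Hence $y \in A \cap C \cap P_{\kappa_x}x$, so this intersection is nonempty.

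Next I would handle the converse. Suppose $A$ is \emph{not} $1$-s-strongly stationary in $P_{\kappa_x}x$; I want to produce a $0$-s-weak club $C$ in $P_{\kappa_x}x$ with $A \cap C \cap P_{\kappa_x}x = \emptyset$. By the failure of $1$-s-strong stationarity (and since $\kappa_x$ is a limit cardinal), there are $S, T \subseteq P_\kappa X$ that are $\strsub$-cofinal in $P_{\kappa_x}x$ such that no $y \in A \cap P_{\kappa_x}x$ has both $S$ and $T$ $\strsub$-cofinal in $P_{\kappa_y}y$. The natural candidate is $C = d_0(S) \cap d_0(T) \cap P_{\kappa_x}x$, the set of $y \in P_{\kappa_x}x$ with $S$ and $T$ both $\strsub$-cofinal in $P_{\kappa_y}y$. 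By construction $A \cap C \cap P_{\kappa_x}x = \emptyset$. It remains to check $C$ is $0$-s-weak club in $P_{\kappa_x}x$. That $C$ is $\strsub$-cofinal follows from the ``base case'' established in the proof of Lemma~\ref{lemma_2_to_n_for_1_stationarity} (with $A = P_{\kappa_x}x$, which is $1$-s-strongly stationary): $d_0(S) \cap d_0(T) \cap P_{\kappa_x}x$ is $\strsub$-cofinal in $P_{\kappa_x}x$. For the closure clause: if $z \strsub x$ and $C$ is $0$-s-strongly stationary (i.e. $\strsub$-cofinal) in $P_{\kappa_z}z$, I must show $z \in C$, i.e. $S$ and $T$ are both $\strsub$-cofinal in $P_{\kappa_z}z$; this is exactly the $\strsub$-closure of $d_0(S)$ and $d_0(T)$, which follows from the first part of the proof of Proposition~\ref{proposition_trace_of_a_cofinal_set} (applied within the ambient space $P_{\kappa_z}z$, using that $\kappa_z$ is weakly Mahlo as a consequence of $C$ being $\strsub$-cofinal in $P_{\kappa_z}z$).

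The main obstacle I anticipate is the bookkeeping around ambient spaces: all the earlier results (Propositions~\ref{proposition_trace_of_a_cofinal_set} and \ref{proposition_reflecting_cofinality}) are stated for the top-level space $P_\kappa X$ with $\kappa$ weakly Mahlo, so I need to invoke them relativized to $P_{\kappa_x}x$ (or $P_{\kappa_z}z$), and to do so I must first confirm the relevant parameter (namely $\kappa_x$, resp. $\kappa_z$) is weakly Mahlo. The cleanest way is to observe that $P_{\kappa_x}x$ being $1$-s-strongly stationary already forces $\kappa_x$ to be weakly Mahlo (this is implicit in Lemma~\ref{lemma_2_to_n_for_1_stationarity} and Proposition~\ref{proposition_strong_vs_1_s_stationarity}), and that $\strsub$-cofinality of any set in $P_{\kappa_z}z$ forces $\kappa_z$ to be a limit cardinal, with weak Mahloness of $\kappa_z$ obtained from Proposition~\ref{proposition_reflecting_cofinality} once we know a nontrivial $\strsub$-cofinal reflecting set exists — or one can simply cite that $d_0(S) \cap d_0(T)$ being $\strsub$-cofinal in $P_{\kappa_z}z$ already encodes enough Mahloness. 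Everything else is a direct unwinding of the definitions of $0$-s-weak club and $1$-s-strong stationarity.
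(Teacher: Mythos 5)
Your proof is correct and follows essentially the same route as the paper: the forward direction comes down to the closure clause of $0$-s-weak clubs, and the backward direction to the fact that $d_0(S)\cap d_0(T)$ is a $0$-s-weak club, with $\strsub$-cofinality supplied by Lemma \ref{lemma_2_to_n_for_1_stationarity} and closure by $d_0(d_0(S))\subseteq d_0(S)$. One small remark: no weak Mahloness of $\kappa_z$ is needed (or obtainable) in the closure step --- the inclusion $d_0(d_0(S))\subseteq d_0(S)$ is the elementary first paragraph of the proof of Proposition \ref{proposition_trace_of_a_cofinal_set} and holds with no hypothesis on $\kappa_z$, so your worry about ambient-space bookkeeping there is unfounded.
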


\begin{proof}
Suppose $A$ is $1$-s-strongly stationary in $P_{\kappa_x}x$ and $C$ is $0$-s-weak club in $P_{\kappa_x}x$. Then $d_0(C)\cap P_{\kappa_x}x\subseteq C\cap P_{\kappa_x}x$ and by Lemma \ref{lemma_2_to_n_for_1_stationarity}, $d_0(C)\cap A$ is $1$-s-strongly stationary in $P_{\kappa_x}x$. Thus $A\cap C\cap P_{\kappa_x}x\neq\emptyset$. Conversely, assume that $A \cap C\cap P_{\kappa_x}x\neq\emptyset$ whenever $C$ is $0$-s-weak club in $P_{\kappa_x}x$. Fix sets $S$ and $T$ that are $0$-s-strongly stationary in $P_{\kappa_x}x$. Then $d_0(S)\cap d_0(T)$ is $0$-s-weak club in $P_{\kappa_x}x$ because $d_0(S)\cap d_0(T)\cap P_{\kappa_x}x$ is $1$-s-strongly stationary and hence $0$-s-strongly stationary in $P_{\kappa_x}x$ by Lemma \ref{lemma_2_to_n_for_1_stationarity}, and $d_0(S)\cap d_0(T)$ is $0$-s-closed in $P_{\kappa_x}x$ since 
\[d_0(d_0(S)\cap d_0(T))\subseteq d_0(S)\cap d_0(T)\]
as a consequence of the fact that $d_0$ is the limit point operator of the space $(P_\kappa X,\tau_0)$.
\end{proof}

\begin{proposition}\label{proposition_limit_points_tau_1}
If $A\subseteq P_\kappa X$ then
\[d_1(A)=\{x\in P_\kappa X\st \text{$A$ is $1$-s-strongly stationary in $P_{\kappa_x}x$}\}.\]
\end{proposition}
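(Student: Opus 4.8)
The plan is to compute $d_1(A)$ directly from the base for $\tau_1$ furnished by Lemma~\ref{lemma_base}: taking $\xi = 1$ there (so that $\zeta$ is forced to be $0$), the sets of the form $I \cap d_0(A_0) \cap \cdots \cap d_0(A_{n-1})$ with $I \in \B_0$, $n < \omega$, and $A_0, \ldots, A_{n-1} \subseteq P_\kappa X$ form a base for $\tau_1$. Hence $x \in d_1(A)$ if and only if every such set containing $x$ also contains a point of $A$ other than $x$, and I would prove the two inclusions of the proposition by unwinding this criterion.

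For the forward inclusion, fix $x \in d_1(A)$. Since $\tau_0 \subseteq \tau_1$ we have $d_1(A) \subseteq d_0(A)$, so by the computation of $d_0$ the set $A$ is $\strsub$-cofinal in $P_{\kappa_x}x$, and by Corollary~\ref{corollary_successors_are_isolated} this forces $\kappa_x$ to be a limit cardinal --- the first clause in the definition of $1$-s-strong stationarity. For the second clause I would fix arbitrary sets $S, T$ that are $0$-s-strongly stationary, i.e. $\strsub$-cofinal, in $P_{\kappa_x}x$; then $x \in d_0(S) \cap d_0(T)$, so for any fixed $a \strsub x$ the set $(a, x] \cap d_0(S) \cap d_0(T)$ is a $\tau_1$-open neighborhood of $x$ and hence contains some $y \in A$ with $y \neq x$. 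From $y \in (a,x] \setminus \{x\}$ one reads off $a \strsub y \strsub x$, so $y \in A \cap P_{\kappa_x}x$, and from $y \in d_0(S) \cap d_0(T)$ one reads off that $S$ and $T$ are both $\strsub$-cofinal in $P_{\kappa_y}y$. As $S, T$ were arbitrary, $A$ is $1$-s-strongly stationary in $P_{\kappa_x}x$.

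For the reverse inclusion, assume $A$ is $1$-s-strongly stationary in $P_{\kappa_x}x$; then $\kappa_x$ is a limit cardinal, so $x$ is not isolated in $\tau_0$ and $x \neq \emptyset$, and it suffices to show that every basic $\tau_1$-neighborhood $N = I \cap d_0(A_0) \cap \cdots \cap d_0(A_{n-1})$ of $x$ meets $A \setminus \{x\}$. Since $x$ is non-isolated, $I$ has the form $(a, b]$ with $a \strsub x$, and each $A_i$ is $\strsub$-cofinal in $P_{\kappa_x}x$ because $x \in d_0(A_i)$. A routine calculation --- using that $\kappa_x$ is a limit cardinal, so that $P_{\kappa_x}x$ is $\strsub$-cofinal in itself --- shows that the interval $(a, x)$ is $\strsub$-cofinal in $P_{\kappa_x}x$ and hence $x \in d_0((a,x))$. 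Appending copies of $(a, x)$ to the list $A_0, \ldots, A_{n-1}$ replaces $N$ by a smaller basic neighborhood of $x$, so I may assume $n \geq 2$, that each $A_i$ is $\strsub$-cofinal in $P_{\kappa_x}x$, and that $(a,x)$ occurs among the $A_i$. Lemma~\ref{lemma_2_to_n_for_1_stationarity} then gives that $B := d_0(A_0) \cap \cdots \cap d_0(A_{n-1}) \cap A$ is $1$-s-strongly stationary in $P_{\kappa_x}x$. Applying the definition of this notion to $B$ with the $0$-s-strongly stationary pair $S = T = (a, x)$ produces $y \in B \cap P_{\kappa_x}x$ such that $(a, x)$ is $\strsub$-cofinal in $P_{\kappa_y}y$; in particular $(a,x) \cap P_{\kappa_y} y \neq \emptyset$, so $a \strsub y$, whence $y \in (a, x) \subseteq I$. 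Since also $y \in d_0(A_i)$ for all $i < n$, $y \in A$, and $y \neq x$, we get $y \in N \cap (A \setminus \{x\})$, completing the proof that $x \in d_1(A)$.

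Modulo Lemmas~\ref{lemma_base} and~\ref{lemma_2_to_n_for_1_stationarity} the argument is short, and I do not foresee a serious obstacle. The only point needing a little care is the passage to the case $n \geq 2$ in the reverse direction, which relies on the elementary fact that the intervals $(c, x)$ with $c \strsub x$ are $\strsub$-cofinal in $P_{\kappa_x}x$ whenever $\kappa_x$ is a limit cardinal; this is exactly where the reverse direction uses that $\kappa_x$ is a limit cardinal rather than merely a limit ordinal, and it is proved by the same kind of chain-construction already appearing in the proof of Lemma~\ref{lemma_2_to_n_for_1_stationarity}.
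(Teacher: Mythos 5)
Your proof is correct and follows essentially the same route as the paper's: the forward direction is the (contrapositive of the) paper's argument using neighborhoods of the form $(a,x]\cap d_0(S)\cap d_0(T)$, and the reverse direction rests on Lemma~\ref{lemma_2_to_n_for_1_stationarity} exactly as in the paper. Your padding of the list with copies of $(a,x)$ to reach $n\geq 2$ and to absorb the interval $I$ just makes explicit a step the paper's proof leaves implicit.
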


\begin{proof}
Suppose $A$ is not $1$-s-strongly stationary in $P_{\kappa_x}x$. If $\kappa_x$ is a successor cardinal then $x$ is isolated in $(P_\kappa X,\tau_1)$ by Corollary \ref{corollary_successors_are_isolated} and hence $x\notin d_1(A)$. Suppose $\kappa_x$ is a limit cardinal. Then there are sets $S$ and $T$ which are $0$-strongly stationary in $P_{\kappa_x}x$ such that $d_0(S)\cap d_0(T)\cap A\cap P_{\kappa_x}x=\emptyset$. Then it follows that $d_0(S)\cap d_0(T)\cap (0,x]$ is an open neighborhood of $x$ in the $\tau_1$ topology that does not intersect $A$ in some point other than $x$. Hence $x\notin d_1(A)$.

Conversely, suppose $A$ is $1$-s-strongly stationary in $P_{\kappa_x}x$. Suppose $x\in I\cap d_0(A_0)\cap\cdots\cap d_0(A_{n-1})$. Then the sets $A_0,\ldots,A_{n-1}$ are all $\strsub$-cofinal in $P_{\kappa_x}x$, and by Lemma \ref{lemma_2_to_n_for_1_stationarity}, the set $I\cap d_0(A_0)\cap \cdots d_0(A_{n-1})\cap A$ is $\strsub$-cofinal in $P_{\kappa_x}x$, which implies that the open neighborhood $I\cap d_0(A_0)\cap\cdots\cap d_0(A_{n-1})$ of $x$ intersects $A$ in some point other than $x$.
\end{proof}

\begin{corollary}
    A point $x\in P_\kappa X$ is not isolated in $(P_\kappa X,\tau_1)$ if and only if $P_{\kappa_x}x$ is $1$-s-strongly stationary in $P_{\kappa_x}x$.
\end{corollary}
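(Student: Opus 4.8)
The plan is to obtain this as an immediate specialization of Proposition~\ref{proposition_limit_points_tau_1} to the set $A = P_\kappa X$ itself. First I would observe that, since $P_\kappa X$ is the whole ambient space, a point $x$ is non-isolated in $(P_\kappa X,\tau_1)$ if and only if $x \in d_1(P_\kappa X)$: any $\tau_1$-open neighborhood $U$ of $x$ satisfies $U \setminus \{x\} \subseteq P_\kappa X$, so $U$ witnesses isolation of $x$ exactly when $U$ fails to meet $P_\kappa X \setminus \{x\}$, which is precisely the failure of $x$ to be a limit point of $P_\kappa X$.

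Applying Proposition~\ref{proposition_limit_points_tau_1} with $A = P_\kappa X$ then gives that $x$ is non-isolated in $(P_\kappa X,\tau_1)$ if and only if $P_\kappa X$ is $1$-s-strongly stationary in $P_{\kappa_x}x$. The only remaining point is to replace ``$P_\kappa X$'' by ``$P_{\kappa_x}x$'' in this last condition. For this I would record the general fact that being $\xi$-s-strongly stationary in $P_{\kappa_x}x$ is \emph{local} to $P_{\kappa_x}x$: inspecting Definition~\ref{definition_xi_s_stationary}, every witness $y$ appearing in the recursion is required to lie in $A \cap P_{\kappa_x}x$, the auxiliary sets $S,T$ enter only through conditions of the form ``$S$ is $\zeta$-s-strongly stationary in $P_{\kappa_y}y$'' with $y \prec x$ (so that $P_{\kappa_y}y \subseteq P_{\kappa_x}x$), and, at the base level, $\prec$-cofinality in $P_{\kappa_x}x$ clearly depends only on $A \cap P_{\kappa_x}x$. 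A straightforward induction on $\xi$ then shows that, for any $A \subseteq P_\kappa X$, the set $A$ is $\xi$-s-strongly stationary in $P_{\kappa_x}x$ if and only if $A \cap P_{\kappa_x}x$ is. Since $x \subseteq X$ and $\kappa_x \leq \kappa$ imply $P_\kappa X \cap P_{\kappa_x}x = P_{\kappa_x}x$, taking $A = P_\kappa X$ yields the desired equivalence.

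I do not anticipate a genuine obstacle here: the argument is a bookkeeping exercise combining the characterization of $d_1$ from Proposition~\ref{proposition_limit_points_tau_1} with the definitional locality of $\xi$-s-strong stationarity, and the case where $\kappa_x$ is a successor cardinal is already subsumed (both sides of the equivalence fail, by Corollary~\ref{corollary_successors_are_isolated} and the limit-cardinal clause of Definition~\ref{definition_xi_s_stationary}). If desired, the locality observation could instead be stated as a short remark earlier in the section, after which the present corollary becomes a one-line consequence.
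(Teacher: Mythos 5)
Your proposal is correct and is essentially the paper's intended argument: the corollary is stated without proof precisely because it is the specialization $A = P_\kappa X$ of Proposition~\ref{proposition_limit_points_tau_1}, combined with the observations that non-isolation means membership in $d_1(P_\kappa X)$ and that $\xi$-s-strong stationarity in $P_{\kappa_x}x$ depends only on $A \cap P_{\kappa_x}x$. Your explicit locality check and the successor-cardinal sanity check are fine but not needed beyond what Definition~\ref{definition_xi_s_stationary} and Corollary~\ref{corollary_successors_are_isolated} already give.
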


\subsection{The $\tau_\xi$ topology on $P_\kappa X$ for $\xi \geq 2$}

We now move to the general setting.
Let us first characterize limit points of sets in the spaces $(P_\kappa X,\tau_\xi)$ in terms of $\xi$-s-strong stationarity.

\begin{theorem}\label{theorem_induction}
For all $\xi<\kappa$ the following hold.
\begin{enumerate}
\item[$(1)_\xi$] We have
\[d_\xi(A)=\{x\in P_\kappa X\st \textrm{$A$ is $\xi$-s-strongly stationary in $P_{\kappa_x}x$}\}.\]
\item[$(2)_\xi$] For all $x\in P_\kappa X$, a set $A$ is $\xi+1$-s-strongly stationary in $P_{\kappa_x}x$ if and only if for all $\zeta\leq\xi$ and every pair $S,T$ of subsets of $P_{\kappa_x}x$ that are $\zeta$-s-strongly stationary in $P_{\kappa_x}x$, we have $A\cap d_\zeta(S)\cap d_\zeta(T)\neq\emptyset$ (equivalently $A\cap d_\zeta(S)\cap d_\zeta(T)$ is $\zeta$-s-strongly stationary in $P_{\kappa_x}x$).
\item[$(3)_\xi$] For all $x\in P_\kappa X$, if $A$ is $\xi$-s-strongly stationary in $P_{\kappa_x}x$ and $A_i$ is $\zeta_i$-s-strongly stationary in $P_{\kappa_x}x$ for some $\zeta_i<\xi$ and all $i<n$, then $A\cap d_{\zeta_0}(A_0)\cap\cdots\cap d_{\zeta_{n-1}}(A_{n-1})$ is $\xi$-s-strongly stationary in $P_{\kappa_x}x$.
\end{enumerate}
\end{theorem}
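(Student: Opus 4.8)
The plan is to prove all three statements simultaneously by transfinite induction on $\xi<\kappa$, the inductive hypothesis being that $(1)_\zeta$, $(2)_\zeta$ and $(3)_\zeta$ hold for every $\zeta<\xi$. First I would dispose of the trivial case: if $\kappa_x$ is not a limit cardinal then $x$ is isolated in $\tau_0$, hence in every $\tau_\xi$, by Corollary~\ref{corollary_successors_are_isolated}, and simultaneously $x$ fails to be $\xi$-s-strongly stationary because of the limit-cardinal clause in Definition~\ref{definition_xi_s_stationary}; so assume throughout that $\kappa_x$ is a limit cardinal. Three preliminary facts, proved once and for all before the induction, will be used freely: (i) $\xi'$-s-strong stationarity of a subset of $P_{\kappa_x}x$ is downward monotone in $\xi'$ and is preserved under passing to supersets (reuse the witnesses); (ii) each $(P_\kappa X,\tau_\xi)$ is $T_1$, since the complement of a point is already $\tau_0$-open (given $y\neq x$ one finds a basic $\tau_0$-neighborhood of $y$ of the form $(a,b]$ avoiding $x$, using that $\prec$ is strict), so $d_\xi(A)$ is $\tau_\xi$-closed and $d_\xi(d_\xi(A))\subseteq d_\xi(A)$; combined with Lemma~\ref{lemma_factor_out_d_xi} this yields, for any fixed $\zeta<\kappa$, a \emph{collapsing identity} $d_\zeta(S_0)\cap\cdots\cap d_\zeta(S_{n-1})=d_\zeta(W)$ for a single $W$ obtained by iteratively absorbing the factors (from $d_\zeta(P)\cap d_\zeta(Q)=d_\zeta(d_\zeta(P)\cap Q)$); (iii) a \emph{restriction lemma}, proved by a separate induction on $\zeta$: if $S$ is $\zeta$-s-strongly stationary in $P_{\kappa_x}x$ and $a\prec x$, then $S\cap(a,x)$ is still $\zeta$-s-strongly stationary in $P_{\kappa_x}x$, the key point being that a set $\zeta$-s-strongly stationary in some $P_{\kappa_z}z$ whose relevant members all lie strictly above $a$ forces $a\prec z$, and this propagates up the hierarchy. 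The base case $\xi=0$ is the Proposition of Subsection~\ref{section_generalizing_the_order_topology} together with the essentially immediate $(2)_0$ and $(3)_0$.

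For the inductive step at $\xi$, I would prove the statements in the order $(1)_\xi$, then $(3)_\xi$, then $(2)_\xi$. By Lemma~\ref{lemma_base}, a neighborhood base at $x$ in $\tau_\xi$ consists of sets $I\cap d_\zeta(S_0)\cap\cdots\cap d_\zeta(S_{n-1})$ with $I\in\B_0$, $\zeta<\xi$. For the ``$\Rightarrow$'' direction of $(1)_\xi$: if $x\in d_\xi(A)$, then given $\zeta<\xi$ and a $\zeta$-s-strongly stationary pair $S,T$ in $P_{\kappa_x}x$, the set $(0,x]\cap d_\zeta(S)\cap d_\zeta(T)$ is (by $(1)_\zeta$) a neighborhood of $x$, so it meets $A$ in a point $y\neq x$, whence $y\prec x$ and, again by $(1)_\zeta$, $S,T$ are $\zeta$-s-strongly stationary in $P_{\kappa_y}y$; this is exactly what is needed to conclude $A$ is $\xi$-s-strongly stationary in $P_{\kappa_x}x$. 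For ``$\Leftarrow$'': assume $A$ is $\xi$-s-strongly stationary, fix a basic neighborhood $I\cap\bigcap_i d_\zeta(S_i)\ni x$ with $I=(a,b]$, use the collapsing identity to rewrite $\bigcap_i d_\zeta(S_i)=d_\zeta(W)$ (so $x\in d_\zeta(W)$, i.e. $W$ is $\zeta$-s-strongly stationary in $P_{\kappa_x}x$ by $(1)_\zeta$), replace $W$ by $W\cap(a,x)$ using the restriction lemma, and apply the $\xi$-s-strong stationarity of $A$ at the level $\zeta<\xi$ to the pair $(W\cap(a,x),\,W\cap(a,x))$: the resulting $y\in A\cap P_{\kappa_x}x$ satisfies $W$ is $\zeta$-s-strongly stationary in $P_{\kappa_y}y$, so $y\in d_\zeta(W)=\bigcap_i d_\zeta(S_i)$, and $a\prec y\prec x$ (with $y\prec b$ by transitivity), placing $y$ in the given neighborhood and distinct from $x$. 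Thus $x\in d_\xi(A)$. (The case $n=0$ is the same with $\zeta=0$, using $\xi\geq 1$.)

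Next, $(3)_\xi$ follows from $(1)_\xi$ and Lemma~\ref{lemma_factor_out_d_xi} by induction on $n$: writing $B'=A\cap d_{\zeta_0}(A_0)\cap\cdots\cap d_{\zeta_{n-2}}(A_{n-2})$, the inductive case gives $x\in d_\xi(B')$ by $(1)_\xi$, while $x\in d_{\zeta_{n-1}}(A_{n-1})$ by $(1)_{\zeta_{n-1}}$; since $\zeta_{n-1}<\xi$, Lemma~\ref{lemma_factor_out_d_xi} gives $d_{\zeta_{n-1}}(A_{n-1})\cap d_\xi(B')=d_\xi(d_{\zeta_{n-1}}(A_{n-1})\cap B')$, so $x\in d_\xi(B'\cap d_{\zeta_{n-1}}(A_{n-1}))$, and one more application of $(1)_\xi$ finishes. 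Finally $(2)_\xi$: unwinding the definition of $(\xi+1)$-s-strong stationarity and translating each clause ``$S,T$ are $\zeta$-s-strongly stationary in $P_{\kappa_y}y$'' (for $\zeta\leq\xi$) into ``$y\in d_\zeta(S)\cap d_\zeta(T)$'' via $(1)_\zeta$ --- now available for all $\zeta\leq\xi$ --- gives the $\neq\emptyset$ characterization directly (with non-emptiness read, as in Definition~\ref{definition_xi_s_stationary}, relative to $P_{\kappa_x}x$). For the parenthetical equivalence with ``$\ldots$ is $\zeta$-s-strongly stationary'', one direction is immediate, and for the other, given a $\zeta$-s-strongly stationary pair $S,T$ and an $\eta$-s-strongly stationary pair $U,V$ with $\eta<\zeta$, the sets $d_\eta(U)\cap S$ and $d_\eta(V)\cap T$ are $\zeta$-s-strongly stationary by $(3)_\zeta$ (available this stage or earlier), and $d_\zeta(d_\eta(U)\cap S)=d_\eta(U)\cap d_\zeta(S)$ by Lemma~\ref{lemma_factor_out_d_xi}, so applying the $\neq\emptyset$ form to the enriched pair produces a witness in $A\cap d_\zeta(S)\cap d_\zeta(T)\cap d_\eta(U)\cap d_\eta(V)$, which is exactly what $\zeta$-s-strong stationarity of $A\cap d_\zeta(S)\cap d_\zeta(T)$ requires.

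The main obstacle is not a single deep idea but the organization of the induction so that nothing is circular --- in particular recognizing that the ``$\Leftarrow$'' direction of $(1)_\xi$ can be run using only the inductive hypothesis together with the collapsing identity and restriction lemma (so that $(3)_\xi$, which one might expect to be the hard combinatorial heart as it was in the $\tau_1$-case of Lemma~\ref{lemma_2_to_n_for_1_stationarity}, actually becomes a formal corollary of $(1)_\xi$) --- and, pervasively, the need to guarantee that every reflection point produced is a \emph{strong} subset of $x$, i.e. genuinely lies in $P_{\kappa_x}x$ rather than being $x$ itself. This last issue is precisely what the restriction lemma and the strong-subset ordering $\prec$ built into Definition~\ref{definition_xi_s_stationary} are there to handle, and verifying the restriction lemma by its own induction on $\zeta$ (together with the limit-ordinal edge cases) is the most delicate bookkeeping.
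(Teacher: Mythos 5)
Your proposed reorganization --- proving $(1)_\xi$ first and then deriving $(3)_\xi$ as a ``formal corollary'' --- hinges entirely on the \emph{collapsing identity} $d_\zeta(P)\cap d_\zeta(Q)=d_\zeta(d_\zeta(P)\cap Q)$ at \emph{equal} levels, and that identity is false. Lemma~\ref{lemma_factor_out_d_xi} is stated only for $\zeta<\xi$, and for good reason: the proof needs $d_\zeta(B)$ to be open in the topology whose derivative is being applied on the outside, and $d_\zeta(B)$ is only guaranteed to be $\tau_\xi$-open when $\xi>\zeta$. At equal levels the inclusion $d_\zeta(P)\cap d_\zeta(Q)\subseteq d_\zeta(d_\zeta(P)\cap Q)$ can fail badly. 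Concretely, at level $0$ with $\kappa$ inaccessible, let $P=\{y\in P_\kappa X\st \kappa_y\text{ is a successor cardinal}\}$ and $Q=P_\kappa X$: then $d_0(P)\cap d_0(Q)$ contains every $x$ with $\kappa_x$ a limit cardinal (one can always pad a given $z\prec x$ up to a set $w$ with $\kappa_w$ a successor), whereas $P\cap d_0(Q)=\emptyset$ and, in the other grouping, $d_0(P)\cap Q$ consists only of points with $\kappa_y$ a limit cardinal, which need not be $\prec$-cofinal below $x$ (take $\kappa_x=\aleph_\omega$). So the rewriting $\bigcap_{i<n}d_\zeta(S_i)=d_\zeta(W)$ is unavailable, and with it the backward direction of your $(1)_\xi$ collapses for basic neighborhoods with $n\geq 3$ same-level factors: the definition of $\xi$-s-strong stationarity only hands you simultaneous reflection of \emph{pairs}, and there is no formal trick that upgrades pairwise reflection to $n$-wise reflection.

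That upgrade is precisely the content of clause $(3)$, and it is why the paper must prove $(3)$ \emph{before} the backward direction of $(1)$ at each successor stage: $(3)_{\xi+1}$ is established by a genuine induction on $n$ using $(2)_{\leq\xi}$ and Lemma~\ref{lemma_factor_out_d_xi} in its legitimate (strictly increasing level) form --- see Lemma~\ref{lemma_2_to_n_for_1_stationarity} for the level-$1$ prototype, whose induction on $n$ is not a formality --- and only then is $(1)_{\xi+1}$ deduced. Your own derivation of $(3)_\xi$ from $(1)_{\leq\xi}$ via $d_{\zeta_{n-1}}(A_{n-1})\cap d_\xi(B')=d_\xi(d_{\zeta_{n-1}}(A_{n-1})\cap B')$ is correct (all those applications have $\zeta_{n-1}<\xi$), but it cannot be used to break the circularity, since you have no independent proof of $(1)_\xi$. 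The remaining ingredients of your write-up are sound and close to the paper's: the restriction lemma (the paper handles the same issue by intersecting with intervals $(y,x)$ inside the reflection arguments), the $T_1$/idempotence observations, the $n\leq 2$ case of $(1)_\xi$ (where reflecting the pair $(S_0\cap(a,x),S_1\cap(a,x))$ does work), and the argument for $(2)_\xi$. The fix is to abandon the collapsing identity and restore the paper's order of deduction at successor stages: $(3)_{\xi+1}$ from $(1)_{\leq\xi}$, $(2)_{\leq\xi}$, $(3)_{\leq\xi}$; then $(1)_{\xi+1}$; then $(2)_{\xi+1}$.
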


\begin{proof}
We have already established that $(1)_\xi$, $(2)_\xi$ and $(3)_\xi$ hold for $\xi\leq 1$. Given these base cases, the fact that (1), (2) and (3) hold for all $\xi<\kappa$ can be established by simultaneous induction using an argument which is essentially identical to that of \cite[Proposition 2.10]{MR3894041}. For the reader's convenience, we include the argument here.

First, suppose $(1)_\zeta$, $(2)_\zeta$ and $(3)_\zeta$ hold for all $\zeta$ less than some limit ordinal $\xi<\kappa$. It is clear that $(1)_\xi$ and $(3)_\xi$ also must hold. Let us prove that $(2)_\xi$ holds. Notice that the backward direction of $(2)_\xi$ easily follows from the definition of $\xi+1$-s-strong stationarity and the fact that $(1)_\zeta$ holds for $\zeta\leq \xi$. For the forward direction of $(2)_\xi$, suppose $A$ is $\xi+1$-s-strongly stationary in $P_{\kappa_x}x$. Fix $\zeta\leq\xi$ and a pair $S,T$ of $\zeta$-s-strongly stationary subsets of $P_{\kappa_x}x$. To show that $A\cap d_\zeta(S)\cap d_\zeta(T)$ is $\zeta$-s-strongly stationary in $P_{\kappa_x}x$, fix sets $A,B$ that are $\eta$-s-strongly stationary in $P_{\kappa_x}x$ where $\eta<\zeta$. Using the fact that (3) holds for $\zeta$, we see that $S\cap d_\eta(A)\cap d_\eta(B)$ is $\zeta$-s-strongly stationary in $P_{\kappa_x}x$. Since $A$ is $\xi+1$-s-strongly stationary, and applying the fact that $(1)_\zeta$ holds, we have 
\[\emptyset\neq d_\zeta(d_\eta(A)\cap d_\eta(B)\cap S)\cap d_\zeta(T)\cap A.\]
But, by Lemme \ref{lemma_factor_out_d_xi},
\[d_\zeta(d_\eta(A)\cap d_\eta(B)\cap S)\cap d_\zeta(T)\cap A = d_\eta(A)\cap d_\eta(B)\cap d_\zeta(S)\cap d_\zeta(T)\cap A.\]
Thus, $d_\zeta(S)\cap d_\zeta(T)\cap A$ is $\zeta$-s-strongly stationary in $P_{\kappa_x}x$.

One can show that if $(1)_{\leq\xi}$, $(2)_{\leq\xi}$ and $(3)_{\leq\xi}$ hold then, by induction on $n$, $(3)_{\xi+1}$ must also hold. For the reader's convenience we provide a proof that $(3)_{\xi+1}$ holds for $n=1$, the remaining case is the same as \cite[Proposition 2.10]{MR3894041}. Suppose $n=1$. To prove that $A\cap d_{\zeta_0}(A_0)$ is $\xi+1$-s-strongly stationary in $P_{\kappa_x}x$, fix sets $S$ and $T$ that are $\eta$-s-strongly stationary in $P_{\kappa_x}x$ for some $\eta\leq\xi$. By $(1)_{\leq\xi}$, it will suffice to show that $A\cap d_{\zeta_0}(A_0)\cap d_\eta(S)\cap d_\eta(T)\neq\emptyset$. If ${\zeta_0}=\eta$, then by $(2)_{\leq\xi}$, it follows that set $A\cap d_{\zeta_0}(A_0)\cap d_\eta(d_\eta(S)\cap d_\eta(T))$, which is contained in $A\cap d_{\zeta_0}(A_0)\cap d_\eta(S)\cap d_\eta(T)$, is ${\zeta_0}$-s-strongly stationary in $P_{\kappa_x}x$, and thus $A\cap d_{\zeta_0}(A_0)\cap d_\eta(S)\cap d_\eta(T)\neq\emptyset$. If ${\zeta_0}<\eta$, then by $(3)_\eta$, if follows that $d_{\zeta_0}(A_0)$ is $\eta$-s-strongly stationary in $P_{\kappa_x}x$, and by $(2)_{\xi}$, the set $A\cap d_\eta(d_{\zeta_0}(A_0))\cap d_\eta(d_\eta(S)\cap d_\eta(T))$, which is contained in $A\cap d_{\zeta_0}(A_0)\cap d_\eta(S)\cap d_\eta(T)$, is $\eta$-s-strongly stationary in $P_{\kappa_x}x$. If $\zeta_0>\eta$ then by $(2)_\xi$ the set $A\cap d_{\zeta_0}(A_0)$ is $\zeta_0$-s-strongly stationary in $P_{\kappa_x}x$ and thus $A\cap d_{\zeta_0}(A_0)\cap d_\eta(S)\cap d_\eta(T)\neq\emptyset$.

Let us prove that if $(1)_{\leq\xi}$, $(2)_{\leq\xi}$ and $(3)_{\leq\xi+1}$ hold then $(1)_{\xi+1}$ holds (this argument is similar to that of Proposition \ref{proposition_limit_points_tau_1}). Suppose $A$ is not $\xi+1$-s-strongly stationary in $P_{\kappa_x}x$. Then by $(1)_{\leq\xi}$, there there are sets $S$ and $T$ which are $\zeta$-s-strongly stationary in $P_{\kappa_x}x$ such that $A\cap d_\zeta(S)\cap d_\zeta(T)=\emptyset$. Thus $d_\zeta(S)\cap d_\zeta(T)\cap (0,x]$ is an open neighborhood of $x$ in the $\tau_{\xi+1}$ topology that does not intersect $A$ in some point other than $x$. Conversly, suppose $A$ is $\xi+1$-s-strongly stationary in $P_{\kappa_x}x$. To show that $x\in d_{\xi+1}(A)$, let $U$ be an arbitrary basic open neighborhood of $x$ in the $\tau_{\xi+1}$ topology. By Lemma \ref{lemma_base}, we can assume that $U$ is of the form 
\[U=I\cap d_\zeta(A_0)\cap\cdots\cap d_\zeta(A_{n-1})\]
where $I\in \B_0$, $n<\omega$, $\zeta<\xi+1$ and $A_i\subseteq P_\kappa X$ for $i<n$. Since $x\in U$ it follows from $(1)_\zeta$ that each $A_i$ is $\zeta$-s-strongly stationary in $P_{\kappa_x}x$, and thus by $(3)_{\xi+1}$ we see that $A\cap d_\zeta(A_0)\cap\cdots\cap d_\zeta(A_{n-1})$ is $\xi+1$-s-strongly stationary in $P_{\kappa_x}x$, and thus $U$ intersects $A$ in some point other than $x$.


Finally, we prove that if $(1)_{\leq\xi+1}$, $(2)_{\leq\xi}$ and $(3)_{\leq\xi+1}$ hold, then $(2)_{\xi+1}$ must also hold.
Suppose $A$ is $\xi+2$-s-strongly stationary in $P_{\kappa_x}x$. 
By $(2)_{\leq\xi}$, it suffices to show that whenever $S$ and $T$ are $\xi+1$-s-strongly stationary in $P_{\kappa_x}x$, 
the set $A\cap d_{\xi+1}(S)\cap d_{\xi+1}(T)$ is $\xi+1$-s-strongly stationary in $P_{\kappa_x}x$. So, fix $Y$ and $Z$ 
which are $\zeta$-s-strongly stationary 
in $P_{\kappa_x}x$ for some $\zeta\leq \xi$. 
By $(3)_{\xi+1}$, it follows that $S\cap d_\zeta(R)$ and $T\cap d_\zeta(Z)$ are $\xi+1$-s-strongly stationary in $P_{\kappa_x}x$, and hence by the $\xi+1$-s-strong stationarity of $A$ and $(1)_{\xi+1}$ we have $A\cap d_{\xi+1}(A\cap d_\zeta(R))\cap d_{\xi+1}(B\cap d_\zeta(Z))\neq\emptyset$. But 
\[d_{\xi+1}(A\cap d_\zeta(R))\cap d_{\xi+1}(B\cap d_\zeta(Z))=d_{\xi+1}(S)\cap d_{\xi+1}(T)\cap d_\zeta(R)\cap d_\zeta(Z)\]
by Lemma \ref{lemma_factor_out_d_xi}, and thus $A\cap d_{\xi+1}(S)\cap d_{\xi+1}(T)$ is $\xi+1$-s-strongly stationary in $P_{\kappa_x}x$.
The backward direction of $(2)_{\xi+1}$ follows easily from $(1)_{\leq\xi}$.
\end{proof}

\begin{corollary} \label{corollary_weak_club}
Suppose $P_{\kappa_x}x$ is $\xi$-s-strongly stationary where $\xi \leq \kappa_x$ and $A$ is $\zeta$-s-strongly stationary in $P_{\kappa_x}x$ for some $\zeta < \xi$. Then, for all $\zeta \leq \zeta' \leq \xi$, $d_\zeta(A)$ is $\zeta'$-s-weak club in $P_{\kappa_x}x$.
\end{corollary}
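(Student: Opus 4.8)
The plan is to derive both required properties of $d_\zeta(A)$ directly from Theorem \ref{theorem_induction}. Recall that, by Definition \ref{definition_xi_s_stationary}(4), to show $d_\zeta(A)$ is $\zeta'$-s-weak club in $P_{\kappa_x}x$ I must show (a) $d_\zeta(A)$ is $\zeta'$-s-strongly stationary in $P_{\kappa_x}x$, and (b) $d_\zeta(A)$ is $\zeta'$-s-closed in $P_{\kappa_x}x$, i.e.\ whenever $y \prec x$ and $d_\zeta(A)$ is $\zeta'$-s-strongly stationary in $P_{\kappa_y}y$, then $y \in d_\zeta(A)$.

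First I would handle the stationarity clause (a). Since $A$ is $\zeta$-s-strongly stationary in $P_{\kappa_x}x$ with $\zeta < \xi \le \zeta'' := \xi$, and $P_{\kappa_x}x$ is $\xi$-s-strongly stationary, I apply Corollary \ref{corollary_weak_club}'s hypothesis together with $(3)_\xi$ of Theorem \ref{theorem_induction} — more precisely, I use the characterization from $(2)$ and the fact that $P_{\kappa_x}x$ is $\xi$-s-strongly stationary — to get that $d_\zeta(A) \cap P_{\kappa_x}x$ is $\zeta$-s-strongly stationary (indeed applying $(2)_\zeta$-type reasoning: $P_{\kappa_x}x$ being $\xi$-s-strongly stationary and $A$ being $\zeta$-s-strongly stationary means reflection of $A$ occurs on a $\zeta$-s-strongly stationary set, and that set is contained in $d_\zeta(A)$). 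Then, for each $\zeta' $ with $\zeta \le \zeta' \le \xi$, I climb up: since $d_\zeta(A)$ is $\zeta$-s-strongly stationary and $P_{\kappa_x}x$ is $\xi$-s-strongly stationary with $\zeta' \le \xi$, applying $(3)_{\zeta'}$ (or directly the definition of $\zeta'$-s-strong stationarity together with reflection coming from the $\xi$-s-strong stationarity of $P_{\kappa_x}x$) yields that $d_\zeta(A)$ is $\zeta'$-s-strongly stationary in $P_{\kappa_x}x$. The cleanest route is probably to prove first that $d_\zeta(A)$ is $\xi$-s-strongly stationary (using $(2)$/$(3)$ and the hypothesis that $P_{\kappa_x}x$ itself is $\xi$-s-strongly stationary, which guarantees the needed reflection down to points of $A$, hence into $d_\zeta(A)$), and then observe that $\xi$-s-strong stationarity implies $\zeta'$-s-strong stationarity for all $\zeta' \le \xi$ — a monotonicity fact that follows by an easy downward induction from the definition, exactly as in Proposition \ref{proposition_kappa_x_plus_1}'s "a fortiori" remark.

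For the closure clause (b), suppose $y \prec x$ and $d_\zeta(A)$ is $\zeta'$-s-strongly stationary in $P_{\kappa_y}y$; I want $y \in d_\zeta(A)$, i.e.\ $A$ is $\zeta$-s-strongly stationary in $P_{\kappa_y}y$. Since $\zeta \le \zeta'$, $\zeta'$-s-strong stationarity of $d_\zeta(A)$ in $P_{\kappa_y}y$ gives in particular (by the monotonicity noted above, or by the definition applied with $S = T = $ a fixed $\zeta$-s-strongly stationary subset of $P_{\kappa_y}y$) that $d_\zeta(A)$ is $\strsub$-cofinal, and more to the point that every $\zeta$-s-strongly stationary $S \subseteq P_{\kappa_y}y$ reflects to a point $w \in d_\zeta(A) \cap P_{\kappa_y}y$; but $w \in d_\zeta(A)$ means, by $(1)_\zeta$, that $A$ is $\zeta$-s-strongly stationary in $P_{\kappa_w}w$, and since $w \prec y$ and $S$ is $\zeta$-s-strongly stationary in $P_{\kappa_w}w$ as well, we conclude $A$ is $\zeta$-s-strongly stationary in $P_{\kappa_y}y$. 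The case $\zeta = 0$ (where $\zeta$-s-strong stationarity is just $\strsub$-cofinality) is handled by the same argument reading "$\strsub$-cofinal" for "$0$-s-strongly stationary", and matches the base-case closure argument already given for $d_0$ in Proposition \ref{proposition_trace_of_a_cofinal_set}.

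The main obstacle I anticipate is bookkeeping around the edge cases $\zeta = 0$ and $\zeta' = 0$ (where "strongly stationary" degenerates to "$\strsub$-cofinal" and the limit-cardinal requirement must be tracked) and making sure the reflection step in clause (a) genuinely uses the hypothesis that $P_{\kappa_x}x$ is $\xi$-s-strongly stationary — without that hypothesis there is no reason for $A$ to reflect into $d_\zeta(A)$ at all. Everything else is a routine unwinding of Definition \ref{definition_xi_s_stationary} against Theorem \ref{theorem_induction}$(1)$–$(3)$, so I would present it compactly, proving the $\xi$-s-strongly-stationary case in full and remarking that the passage to general $\zeta' \le \xi$ is immediate from monotonicity.
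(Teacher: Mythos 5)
Your proposal is correct and follows essentially the same route as the paper: part (a) is Theorem \ref{theorem_induction}(3) applied with $P_{\kappa_x}x$ as the $\xi$-s-strongly stationary set, followed by monotonicity down to $\zeta'$, and your two-step reflection argument for closure is just the element-wise unwinding of the paper's one-line containment $d_{\zeta'}(d_\zeta(A))\subseteq d_\zeta(d_\zeta(A))\subseteq d_\zeta(A)$.
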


\begin{proof}
Fix $\zeta'$ with $\zeta \leq \zeta' \leq \xi$.
It follows from Theorem \ref{theorem_induction}(3) that $d_\zeta(A)$ is $\xi$-s-strongly stationary and hence $\zeta'$-s-strongly stationary 
in $P_{\kappa_x} x$. Furthermore, $d_\zeta(A)$ is $\zeta'$-s-closed below $P_{\kappa_x}x$ since $d_{\zeta'}(d_\zeta(A))\subseteq d_\zeta(d_\zeta(A))\subseteq d_\zeta(A)$.
\end{proof}

\begin{corollary}\label{corollary_xi_plus_1_s_filter_base}
Suppose $P_{\kappa_x}x$ is $\xi+1$-s-strongly stationary. Then a set $A$ is $\xi+1$-s-strongly stationary in $P_{\kappa_x}x$ if and only if $A\cap C\neq\emptyset$ for all sets $C\subseteq P_{\kappa_x}x$ which are $\xi$-s-weak club in $P_{\kappa_x}x$. Thus the filter generated by the $\xi$-s-weak club subsets of $P_{\kappa_x}x$ is the filter dual to $\NS_{\kappa_x,x}^{\xi+1}$.
\end{corollary}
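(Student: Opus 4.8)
The plan is to deduce Corollary~\ref{corollary_xi_plus_1_s_filter_base} from Corollary~\ref{corollary_weak_club} together with Theorem~\ref{theorem_induction}(2). The statement has two parts: first, the characterization of $\xi+1$-s-strongly stationary sets in terms of meeting all $\xi$-s-weak clubs; second, the identification of the dual filter of $\NS_{\kappa_x,x}^{\xi+1}$ with the filter generated by the $\xi$-s-weak clubs. The second part is a formal consequence of the first, so the real content is the equivalence in the first sentence.

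\textbf{Forward direction.} Suppose $A$ is $\xi+1$-s-strongly stationary in $P_{\kappa_x}x$, and let $C \subseteq P_{\kappa_x}x$ be $\xi$-s-weak club in $P_{\kappa_x}x$. I want to show $A \cap C \neq \emptyset$. Since $C$ is in particular $\xi$-s-strongly stationary in $P_{\kappa_x}x$, I can apply Theorem~\ref{theorem_induction}(2) with $\zeta = \xi$ and $S = T = C$: this gives that $A \cap d_\xi(C) \cap d_\xi(C) = A \cap d_\xi(C)$ is $\xi$-s-strongly stationary in $P_{\kappa_x}x$, and in particular nonempty. But $C$ is $\xi$-s-closed in $P_{\kappa_x}x$, which by Definition~\ref{definition_xi_s_stationary}(4) means precisely that $d_\xi(C) \cap P_{\kappa_x}x \subseteq C$ (using Theorem~\ref{theorem_induction}(1) to identify $d_\xi(C)$ with the set of $y$ where $C$ is $\xi$-s-strongly stationary in $P_{\kappa_y}y$). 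Since every element of $A \cap d_\xi(C)$ witnessing nonemptiness can be taken in $P_{\kappa_x}x$ (as $\xi$-s-strong stationarity in $P_{\kappa_x}x$ concerns subsets of $P_{\kappa_x}x$), we get $A \cap C \neq \emptyset$.

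\textbf{Backward direction.} Suppose $A \cap C \neq \emptyset$ for every $\xi$-s-weak club $C \subseteq P_{\kappa_x}x$. To show $A$ is $\xi+1$-s-strongly stationary in $P_{\kappa_x}x$, I use the characterization in Theorem~\ref{theorem_induction}(2): fix $\zeta \leq \xi$ and a pair $S, T$ of $\zeta$-s-strongly stationary subsets of $P_{\kappa_x}x$; I must show $A \cap d_\zeta(S) \cap d_\zeta(T) \neq \emptyset$. By Corollary~\ref{corollary_weak_club} (applied with $\zeta' = \xi$, using that $P_{\kappa_x}x$ is $\xi+1$-s-strongly stationary hence a fortiori $\xi$-s-strongly stationary), both $d_\zeta(S)$ and $d_\zeta(T)$ are $\xi$-s-weak club in $P_{\kappa_x}x$. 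The intersection of two $\xi$-s-weak clubs is again $\xi$-s-weak club — this needs a brief check: by Theorem~\ref{theorem_induction}(3), the intersection of a $\xi$-s-strongly stationary set with a $\zeta$-s-strongly stationary set for $\zeta \le \xi$ is $\xi$-s-strongly stationary, so $d_\zeta(S) \cap d_\zeta(T)$ is $\xi$-s-strongly stationary; and it is $\xi$-s-closed since $d_\xi(d_\zeta(S) \cap d_\zeta(T)) \subseteq d_\xi(d_\zeta(S)) \cap d_\xi(d_\zeta(T)) \subseteq d_\zeta(S) \cap d_\zeta(T)$. Hence by hypothesis $A \cap d_\zeta(S) \cap d_\zeta(T) \neq \emptyset$, as required.

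\textbf{The filter statement.} For the last sentence, the forward direction shows every $\xi+1$-s-strongly stationary set — in particular every element of $(\NS_{\kappa_x,x}^{\xi+1})^*$ — meets every $\xi$-s-weak club; conversely, since complements of non-$(\xi+1)$-s-strongly stationary sets are $\xi+1$-s-strongly stationary (one should note the $\NS^{\xi+1}_{\kappa_x,x}$-positive sets are exactly the $\xi+1$-s-strongly stationary sets), and the backward direction shows any set meeting all $\xi$-s-weak clubs is $\xi+1$-s-strongly stationary, a standard argument gives that the $\xi$-s-weak clubs generate exactly $(\NS_{\kappa_x,x}^{\xi+1})^*$. \textbf{The main obstacle} I anticipate is the bookkeeping around the clause ``$y \prec x$'' versus ``$y < x$'' and the restriction to $P_{\kappa_x}x$ in the definition of $\xi$-s-closedness, and making sure the witnesses produced by Theorem~\ref{theorem_induction}(2) actually lie inside $P_{\kappa_x}x$ so that $\xi$-s-closedness can be invoked; everything else is a direct assembly of the cited results.
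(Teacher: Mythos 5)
Your proof is correct and takes essentially the same route as the paper's: the forward direction extracts a point of $A\cap d_\xi(C)\cap P_{\kappa_x}x$ from Theorem \ref{theorem_induction} and pushes it into $C$ via $\xi$-s-closedness, and the backward direction verifies that $d_\zeta(S)\cap d_\zeta(T)$ is a $\xi$-s-weak club using Theorem \ref{theorem_induction}(3) and the inclusion $d_\xi(d_\zeta(S)\cap d_\zeta(T))\subseteq d_\zeta(S)\cap d_\zeta(T)$. The only cosmetic differences (citing part (2) rather than (1) and (3) in the forward direction, and routing through Corollary \ref{corollary_weak_club} in the backward direction) do not change the substance, and your attention to the $P_{\kappa_x}x$ membership of the witness is exactly the point the paper handles implicitly.
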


\begin{proof}
Suppose $A$ is $\xi+1$-s-strongly stationary in $P_{\kappa_x}x$ and $C$ is $\xi$-s-weak club in $P_{\kappa_x}x$. By Theorem \ref{theorem_induction}(1), it follows that $d_\xi(C)\subseteq C$ and by Theorem \ref{theorem_induction}(3) we see that $d_\xi(C)\cap A$ is $\xi+1$-s-strongly stationary in $P_{\kappa_x}x$ and thus $C\cap A\cap P_{\kappa_x}x\neq\emptyset$.

Conversely, suppose $A\cap C\neq\emptyset$ whenever $C$ is a $\xi$-s-weak club subset of $P_{\kappa_x}x$. To show that $A$ is $\xi+1$-s-strongly stationary in $P_{\kappa_x}x$, suppose $S$ and $T$ are $\zeta$-s-strongly stationary in $P_{\kappa_x}x$ for some $\zeta\leq\xi$. Then the set $d_\zeta(S)\cap d_\zeta(T)$ is $\xi$-s-weak club in $P_{\kappa_x}x$ because it is $\xi$-s-strongly stationary in $P_{\kappa_x}x$ by Theorem \ref{theorem_induction}(3) and it is $\xi$-s-closed in $P_{\kappa_x}x$ since \[d_\xi(d_\zeta(S)\cap d_\zeta(T))\subseteq d_\zeta(d_\zeta(S)\cap d_\zeta(T))\subseteq d_\xi(S)\cap d_\xi(T).\]
Thus $A\cap d_\zeta(S)\cap d_\zeta(T)\cap P_{\kappa_x}x\neq\emptyset$, and hence $A$ is $\xi+1$-s-strongly stationary in $P_{\kappa_x}x$ as desired.
\end{proof}

\begin{corollary}
    Suppose that $x \in P_\kappa X$ and $\xi \leq \kappa_x$. Then 
    $x$ is not isolated in $(P_\kappa X, \tau_\xi)$ if and only if 
    $P_{\kappa_x} x$ is $\xi$-s-strongly stationary.
\end{corollary}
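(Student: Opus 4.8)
The plan is to derive this corollary directly from Theorem \ref{theorem_induction}(1), which identifies $d_\xi(A)$ with the set of $x$ at which $A$ is $\xi$-s-strongly stationary, together with the basic topological fact that a point $x$ is isolated in $(P_\kappa X, \tau_\xi)$ precisely when $\{x\}$ is open in that topology, i.e.\ when $x \notin d_\xi(A)$ for every $A$ containing $x$ — equivalently, when $x \notin d_\xi(P_\kappa X)$, since $d_\xi$ is monotone and $P_\kappa X$ is the largest possible $A$.

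First I would observe that, by Lemma \ref{lemma_base}, the basic open neighborhoods of $x$ in $\tau_\xi$ have the form $I \cap d_\zeta(A_0) \cap \cdots \cap d_\zeta(A_{n-1})$ with $I \in \B_0$, $\zeta < \xi$, and each $A_i \subseteq P_\kappa X$. Thus $x$ is isolated in $(P_\kappa X, \tau_\xi)$ if and only if some such neighborhood $U$ of $x$ equals $\{x\}$, equivalently if and only if no basic open neighborhood of $x$ meets $P_\kappa X$ in a point other than $x$, which is exactly the statement $x \notin d_\xi(P_\kappa X)$.

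Next, applying Theorem \ref{theorem_induction}(1) with $A = P_\kappa X$, we get
\[
x \in d_\xi(P_\kappa X) \iff \text{$P_\kappa X$ is $\xi$-s-strongly stationary in $P_{\kappa_x} x$}.
\]
Since $\xi$-s-strong stationarity in $P_{\kappa_x} x$ is a property that only refers to subsets of $P_{\kappa_x} x$ and only witnesses $y \in A \cap P_{\kappa_x} x$, the set $A = P_\kappa X$ is $\xi$-s-strongly stationary in $P_{\kappa_x} x$ if and only if $P_{\kappa_x} x$ itself is — this uses the convention, stated just after Definition \ref{definition_xi_s_stationary}, that ``$P_{\kappa_x} x$ is $\xi$-s-strongly stationary'' abbreviates ``$P_{\kappa_x} x$ is $\xi$-s-strongly stationary in $P_{\kappa_x} x$,'' and the elementary fact that $P_\kappa X \cap P_{\kappa_x} x = P_{\kappa_x} x$. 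Combining the two equivalences yields that $x$ is isolated in $(P_\kappa X, \tau_\xi)$ iff $P_{\kappa_x} x$ is not $\xi$-s-strongly stationary, which is the contrapositive of the claim. The hypothesis $\xi \leq \kappa_x$ is what ensures that $\xi < \kappa$ is available whenever needed and, more importantly, makes the notion of $\xi$-s-strong stationarity in $P_{\kappa_x} x$ non-trivial (cf.\ Proposition \ref{proposition_kappa_x_plus_1}); it should simply be invoked to license the application of Theorem \ref{theorem_induction} at the relevant indices.

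There is essentially no obstacle here: every component has already been established. The only mild subtlety — and the one point worth spelling out — is the reduction ``$x$ isolated $\iff x \notin d_\xi(P_\kappa X)$,'' which follows from monotonicity of the Cantor derivative ($A \subseteq B \Rightarrow d_\xi(A) \subseteq d_\xi(B)$) and the observation that any basic open $U \ni x$ with $U \cap P_\kappa X \subseteq \{x\}$ witnesses $x \notin d_\xi(P_\kappa X)$, while conversely $x \notin d_\xi(P_\kappa X)$ directly gives such a $U$. Everything else is a one-line appeal to Theorem \ref{theorem_induction}(1).
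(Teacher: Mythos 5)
Your proof is correct, and it takes a more economical route than the paper's. The paper argues the two directions separately: for the forward direction it extracts, from a failure of $\xi$-s-strong stationarity, a pair $S,T$ of $\zeta$-s-strongly stationary sets with no common reflection point below $x$ and uses Theorem \ref{theorem_induction}(1) to see that $d_\zeta(S)\cap d_\zeta(T)$ isolates $x$; for the converse it invokes Corollaries \ref{corollary_weak_club} and \ref{corollary_xi_plus_1_s_filter_base} to show that every basic $\tau_\xi$-neighborhood of a point $x$ with $P_{\kappa_x}x$ $\xi$-s-strongly stationary is a $\zeta$-s-weak club, hence strictly larger than $\{x\}$. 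You instead collapse both directions into a single application of Theorem \ref{theorem_induction}(1) with $A = P_\kappa X$, via the purely topological identity ``$x$ is non-isolated $\iff x \in d_\xi(P_\kappa X)$'' and the observation that $\xi$-s-strong stationarity of $A$ in $P_{\kappa_x}x$ depends only on $A \cap P_{\kappa_x}x = P_{\kappa_x}x$ (both of which you correctly justify from Definition \ref{definition_xi_s_stationary}). What your argument buys is brevity and independence from the weak-club machinery; what the paper's argument buys is slightly more information, namely an explicit isolating neighborhood of the form $d_\zeta(S)\cap d_\zeta(T)$ lying in the subbase $\B_\xi$ (relevant to the base questions in Theorem \ref{theorem_base_characterization}) and the fact that every basic neighborhood of a non-isolated point is a $\zeta$-s-weak club. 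Both proofs ultimately rest on Theorem \ref{theorem_induction}, so yours is a legitimate and cleaner derivation of the stated corollary.
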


\begin{proof}
    For the forward direction, suppose that $P_{\kappa_x} x$ is not 
    $\xi$-s-strongly stationary. Then there is $\zeta < \xi$ and 
    sets $S,T \subseteq P_{\kappa_x}x$ such that $S$ and $T$ are both 
    $\zeta$-s-strongly stationary in $P_{\kappa_x} x$ but there is 
    not $y \prec x$ such that $S$ and $T$ are both $\zeta$-s-strongly 
    stationary in $P_{\kappa_y} y$. Then, by Theorem 
    \ref{theorem_induction}(1), we have $d_\zeta(S) \cap d_\zeta(T) = 
    \{x\}$, so $x$ is isolated in $(P_\kappa X, \tau_\xi)$. 

    For the converse, suppose that $P_{\kappa_x} x$ is 
    $\xi$-s-strongly stationary, and fix an interval $I \in \B_0$, 
    an $n < \omega$, ordinals $\xi_0, \ldots, \xi_{n-1} < \xi$, 
    and sets $A_0, \ldots, A_{n-1} \subseteq P_{\kappa_x} x$ such that
    \[
      x \in U := I \cap d_{\xi_0}(A_0) \cap \ldots \cap 
      d_{\xi_{n-1}}(A_{n-1}).
    \]
    Let $\zeta := \max\{\zeta_i \mid i < n\} < \xi$.
    By Corollary \ref{corollary_weak_club}, each of $I$, 
    $d_{\xi_0}(A_0)$, \ldots, $d_{\xi_{n-1}}(A_{n-1})$ is 
    $\zeta$-s-weak club in $P_{\kappa_x} x$. By Corollary 
    \ref{corollary_xi_plus_1_s_filter_base}, $U$ is also $\zeta$-s-weak 
    club in $P_{\kappa_x} x$. In particular, $U \neq \{x\}$; 
    hence, $x$ is not isolated in $P_{\kappa_x} x$.
\end{proof}

\begin{corollary}\label{corollary_filter_base}
Suppose $P_{\kappa_x}x$ is $\xi$-s-strongly stationary where $0<\xi \leq \kappa_x$. Then a set $A$ is $\xi$-s-strongly stationary in $P_{\kappa_x}x$ if and only if for all $\zeta<\xi$ we have $A\cap C\neq\emptyset$ for all sets $C\subseteq P_{\kappa_x}x$ which are $\zeta$-s-weak club in $P_{\kappa_x}x$.
\end{corollary}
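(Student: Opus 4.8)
The plan is to derive this as a fairly immediate consequence of the previously established machinery, in particular Theorem~\ref{theorem_induction}(1) together with Corollaries~\ref{corollary_weak_club} and~\ref{corollary_xi_plus_1_s_filter_base}, splitting into the successor and limit cases for $\xi$. First I would dispose of the successor case: if $\xi = \eta+1$ for some $\eta$, then the statement is essentially Corollary~\ref{corollary_xi_plus_1_s_filter_base}, modulo checking that ``$A \cap C \neq \emptyset$ for all $\eta$-s-weak clubs $C$'' is the same as ``for all $\zeta < \xi$, $A \cap C \neq \emptyset$ for all $\zeta$-s-weak clubs $C$.'' One direction here is trivial (take $\zeta = \eta$). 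For the other direction, I would note that if $\zeta \leq \eta$ and $C$ is $\zeta$-s-weak club in $P_{\kappa_x}x$, then, using Corollary~\ref{corollary_weak_club} applied with $\zeta' = \eta$ (noting $d_\zeta(C) \subseteq C$ by Theorem~\ref{theorem_induction}(1), since $d_\zeta(d_\zeta(C)) \subseteq d_\zeta(C)$), the set $d_\zeta(C)$ is $\eta$-s-weak club and contained in $C$, so $A$ meeting every $\eta$-s-weak club forces $A \cap C \neq \emptyset$ as well.

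For the limit case, suppose $\xi$ is a limit ordinal and $P_{\kappa_x}x$ is $\xi$-s-strongly stationary. The forward direction is immediate from Corollary~\ref{corollary_xi_plus_1_s_filter_base} (or directly from Theorem~\ref{theorem_induction}(1),(3)): if $A$ is $\xi$-s-strongly stationary and $\zeta < \xi$ with $C$ a $\zeta$-s-weak club in $P_{\kappa_x}x$, then $A$ is in particular $(\zeta+1)$-s-strongly stationary (since $\zeta+1 \le \xi$ as $\xi$ is a limit, and $\xi$-s-strong stationarity implies $\zeta'$-s-strong stationarity for all $\zeta' < \xi$), so by Corollary~\ref{corollary_xi_plus_1_s_filter_base} applied at level $\zeta+1$ we get $A \cap C \neq \emptyset$. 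For the converse, assume $A$ meets every $\zeta$-s-weak club of $P_{\kappa_x}x$ for every $\zeta < \xi$; I want $A$ to be $\xi$-s-strongly stationary. Fix $\zeta < \xi$ and sets $S, T$ that are $\zeta$-s-strongly stationary in $P_{\kappa_x}x$. By Corollary~\ref{corollary_weak_club} (applied with $\zeta' = \zeta$, using that $P_{\kappa_x}x$ is $\zeta'$-s-strongly stationary since $\zeta < \xi$), the sets $d_\zeta(S)$ and $d_\zeta(T)$ are $\zeta$-s-weak club in $P_{\kappa_x}x$; hence so is $d_\zeta(S) \cap d_\zeta(T)$ (the intersection of two $\zeta$-s-weak clubs is $\zeta$-s-weak club: it is $\zeta$-s-strongly stationary by Theorem~\ref{theorem_induction}(3) applied with $A$ being one of them and $\zeta_0 = \zeta$, and $\zeta$-s-closed since $d_\zeta(d_\zeta(S) \cap d_\zeta(T)) \subseteq d_\zeta(d_\zeta(S)) \cap d_\zeta(d_\zeta(T)) \subseteq d_\zeta(S) \cap d_\zeta(T)$). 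By hypothesis $A$ meets this set, so $A \cap d_\zeta(S) \cap d_\zeta(T) \neq \emptyset$, which is exactly the $\zeta$-clause needed for $\xi$-s-strong stationarity of $A$; since $\zeta < \xi$ was arbitrary, $A$ is $\xi$-s-strongly stationary.

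The only mild subtlety — which I expect to be the main (though still minor) obstacle — is verifying that $\zeta$-s-weak clubs are closed under finite intersections at the level needed, and cleanly quoting Corollary~\ref{corollary_weak_club} with the correct indices (it requires that the ambient $P_{\kappa_x}x$ be $\xi'$-s-strongly stationary for an appropriate $\xi' \ge \zeta'$, which holds here because $\zeta < \xi$ and $P_{\kappa_x}x$ is $\xi$-s-strongly stationary, and $\xi$-s-strong stationarity passes down to all smaller ordinals). Everything else is bookkeeping on the levels $\zeta$, $\zeta+1 \le \xi$, and the limit/successor dichotomy; there is no genuinely new combinatorial content beyond what Theorem~\ref{theorem_induction} and its corollaries already supply.
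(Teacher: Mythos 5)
Your proof is correct and takes essentially the same route as the paper's: the paper runs the identical argument uniformly in $\xi$ (forward direction by noting $A$ must meet $d_\zeta(C)\subseteq C$ since $C$ is $\zeta$-s-strongly stationary, backward direction by showing $d_\zeta(S)\cap d_\zeta(T)$ is a $\zeta$-s-weak club via Theorem \ref{theorem_induction}(3)), so your successor/limit case split and the detour through Corollary \ref{corollary_xi_plus_1_s_filter_base} are unnecessary but harmless.
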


\begin{proof}
Suppose $A$ is $\xi$-s-strongly stationary in $P_{\kappa_x}x$. Fix $\zeta<\xi$ and assume that $C\subseteq P_{\kappa_x}x$ is $\zeta$-s-weak club in $P_{\kappa_x}x$. Since $C$ is $\zeta$-s-strongly stationary in $P_{\kappa_x}x$ there is some $y\in d_\zeta(C)\cap A$, but since $d_\zeta(C)\subseteq C$ we have $y\in C\cap A$. Conversely, suppose that for all $\zeta<\xi$ and all $C\subseteq P_{\kappa_x}x$ that are $\zeta$-s-weak club in $P_{\kappa_x}x$ we have $A\cap C\neq\emptyset$. To show that $A$ is $\xi$-s-strongly stationary in $P_{\kappa_x}x$, suppose $S$ and $T$ are $\zeta$-s-strongly stationary in $P_{\kappa_x}x$ for some $\zeta<\xi$. Then, since we are assuming that $P_{\kappa_x}x$ is $\xi$-s-strongly stationary, it follows by Theorem \ref{theorem_induction}(3) that $d_\zeta(S)\cap d_\zeta(T)$ is $\xi$-s-strongly stationary in $P_{\kappa_x}x$. Furthermore, 
\[d_\zeta(d_\zeta(S)\cap d_\zeta(T))\subseteq d_\zeta(d_\zeta(S))\cap d_\zeta(d_\zeta(T))\subseteq d_\zeta(S)\cap d_\zeta(T),\]
which implies that $d_\zeta(S)\cap d_\zeta(T)$ is $\zeta$-s-weak club in $P_{\kappa_x}x$.
Conversely, suppose $A\cap d_\zeta(S)\cap d_\zeta(T)\neq\emptyset$ whenever $S$ and $T$ are $\zeta$-s-strongly stationary in $P_{\kappa_x}x$ for some $\zeta\leq\xi+1$. Then it easily follows by $(1)\leq\xi$ that $A$ is $\xi+2$-s-strongly stationary in $P_{\kappa_x}x$.
\end{proof}

\begin{proposition}
For $x\in P_\kappa X$ and $\xi \leq \kappa_x$, the set $P_{\kappa_x}x$ is $\xi$-s-strongly stationary if and only if $\NS_{\kappa_x,x}^\xi$ is a nontrivial ideal.
\end{proposition}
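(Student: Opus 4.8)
The plan is to reduce the whole statement to the topological reformulation supplied by Theorem~\ref{theorem_induction}(1), after which every clause becomes an instance of a soft, general fact about Cantor derivatives. Recall that, since $\xi \leq \kappa_x < \kappa$, Theorem~\ref{theorem_induction}(1) applies and identifies, for each $A \subseteq P_\kappa X$, the statement ``$A$ is $\xi$-s-strongly stationary in $P_{\kappa_x}x$'' with the statement ``$x \in d_\xi(A)$'', where $d_\xi$ is the Cantor derivative of $(P_\kappa X,\tau_\xi)$. Thus
\[
  \NS^\xi_{\kappa_x,x} = \{A \subseteq P_\kappa X \mid x \notin d_\xi(A)\}.
\]
For the direction ($\Leftarrow$) I would argue as follows: a nontrivial ideal is in particular proper, so it is not all of $P(P_\kappa X)$; fixing any $A \notin \NS^\xi_{\kappa_x,x}$, the set $A$ is $\xi$-s-strongly stationary in $P_{\kappa_x}x$, and since $\xi$-s-strong stationarity in $P_{\kappa_x}x$ depends only on $A \cap P_{\kappa_x}x$ and is monotone in its first argument (an easy induction on $\xi$, or monotonicity of $d_\xi$ via Theorem~\ref{theorem_induction}(1)), it follows that $P_{\kappa_x}x$ is $\xi$-s-strongly stationary.

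For the direction ($\Rightarrow$), assume $P_{\kappa_x}x$ is $\xi$-s-strongly stationary; I would then verify the three defining properties of a nontrivial ideal. Properness is exactly the hypothesis: $P_{\kappa_x}x \notin \NS^\xi_{\kappa_x,x}$ (equivalently $P_\kappa X \notin \NS^\xi_{\kappa_x,x}$, since these two sets have the same intersection with $P_{\kappa_x}x$). Next, $\emptyset \in \NS^\xi_{\kappa_x,x}$ because $d_\xi(\emptyset) = \emptyset$ in any topological space, so $x \notin d_\xi(\emptyset)$; alternatively this is immediate from the definition of $\xi$-s-strong stationarity. Downward closure is immediate from monotonicity of the Cantor derivative: if $B \subseteq A$ then $d_\xi(B) \subseteq d_\xi(A)$, so $x \notin d_\xi(A)$ forces $x \notin d_\xi(B)$, i.e.\ $A \in \NS^\xi_{\kappa_x,x}$ implies $B \in \NS^\xi_{\kappa_x,x}$.

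The one clause whose direct combinatorial proof would be genuinely delicate --- and hence the ``main obstacle'' --- is closure under finite unions, but this too is dispatched by the topological reformulation. The key point is that the Cantor derivative of any topological space is finitely additive, i.e.\ $d_\xi(A \cup B) = d_\xi(A) \cup d_\xi(B)$ for all $A,B \subseteq P_\kappa X$: the inclusion $\supseteq$ is just monotonicity, and for $\subseteq$, if a point $p$ has a $\tau_\xi$-open neighborhood $U$ with $U \cap A \subseteq \{p\}$ and a $\tau_\xi$-open neighborhood $V$ with $V \cap B \subseteq \{p\}$, then $U \cap V$ is a $\tau_\xi$-open neighborhood of $p$ meeting $A \cup B$ in at most $p$, so $p \notin d_\xi(A \cup B)$. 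Applying this: if $A,B \in \NS^\xi_{\kappa_x,x}$, then $x \notin d_\xi(A)$ and $x \notin d_\xi(B)$, hence $x \notin d_\xi(A) \cup d_\xi(B) = d_\xi(A \cup B)$, so $A \cup B \in \NS^\xi_{\kappa_x,x}$. This completes the verification that $\NS^\xi_{\kappa_x,x}$ is a nontrivial ideal, and hence the proof. In the writeup the only point needing care is making the passage through Theorem~\ref{theorem_induction}(1) explicit and noting that it is legitimate because $\xi \leq \kappa_x < \kappa$; a purely combinatorial treatment of finite unions --- splitting a witnessing $\zeta$-s-weak club obtained from Corollary~\ref{corollary_filter_base} --- is also available but considerably messier, so the topological route is preferable.
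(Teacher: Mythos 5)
Your proof is correct, and for the substantive step it takes a genuinely different route from the paper. Both arguments agree that the only nontrivial clause is closure under finite unions (properness is exactly the hypothesis, and downward closure and $\emptyset\in\NS^\xi_{\kappa_x,x}$ are immediate from monotonicity), and both lean on Theorem~\ref{theorem_induction}, but on different parts of it. The paper works combinatorially: given $A,B\notin(\NS^\xi_{\kappa_x,x})^+$... rather, given $A,B\in\NS^\xi_{\kappa_x,x}$, it invokes Corollary~\ref{corollary_filter_base} to produce a $\zeta_A$-s-weak club $C_A$ disjoint from $A$ and a $\zeta_B$-s-weak club $C_B$ disjoint from $B$, and then uses Theorem~\ref{theorem_induction}(3) to check that $d_{\zeta_A}(C_A)\cap d_{\zeta_B}(C_B)$ is a $\max\{\zeta_A,\zeta_B\}$-s-weak club avoiding $A\cup B$ --- exactly the ``considerably messier'' splitting argument you mention at the end. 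You instead pass through Theorem~\ref{theorem_induction}(1) to rewrite $\NS^\xi_{\kappa_x,x}=\{A\mid x\notin d_\xi(A)\}$ and then quote the purely topological fact that the Cantor derivative of any space satisfies $d_\xi(A\cup B)=d_\xi(A)\cup d_\xi(B)$; this is softer and arguably cleaner, since all the set-theoretic content is concentrated in the already-proved identification of $d_\xi$ with $\xi$-s-strong stationarity, and the ideal axioms then come for free from general topology. You are also more complete than the paper in explicitly handling the backward direction and the degenerate clauses. The one cosmetic caveat is that your appeal to Theorem~\ref{theorem_induction}(1) for the identity $d_\xi(\emptyset)=\emptyset$ and for monotonicity silently ignores the degenerate case $P_{\kappa_x}x=\emptyset$, but the paper ignores the same degeneracy, so this is not a real gap.
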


\begin{proof}
Suppose $P_{\kappa_x}x$ is $0$-s-strongly stationary. Then $\NS_{\kappa_x,x}^0$ is the ideal $I_{\kappa_x,x}$ consisting of all subsets $A$ of $P_{\kappa_x}x$ such that there is some $y\in P_{\kappa_x}x$ with $A\cap(y,x)=\emptyset$. Clearly this is a nontrivial ideal since $P_{\kappa_x}x\notin I_{\kappa_x,x}$.

Now suppose $\xi>0$. Let us show that $\NS_{\kappa_x,x}^\xi$ is an ideal. Suppose $A$ and $B$ are both not $\xi$-s-strongly stationary in $P_{\kappa_x}x$. By Corollary \ref{corollary_filter_base}, there are sets $C_A,C_B\subseteq P_{\kappa_x}x$ such that $C_A$ is $\zeta_A$-s-weak club in $P_{\kappa_x}x$ for some $\zeta_A<\xi$, $C_B$ is $\zeta_B$-s-weak club in $P_{\kappa_x}x$ for some $\zeta_B<\xi$, such that $C_A\cap A=\emptyset$ and $C_B\cap B=\emptyset$. Then $d_{\zeta_A}(C_A)\cap d_{\zeta_B}(C_B) \cap (A\cup B)=\emptyset$ where $d_{\zeta_A}(C_A)\cap d_{\zeta_B}(C_B)$ is $\zeta$-s-weak club in $P_{\kappa_x}x$ for $\zeta=\max\{\zeta_A,\zeta_B\}$ because $d_{\zeta_A}(C_A)\cap d_{\zeta_B}(C_B)\cap P_{\kappa_x}x$ is $\zeta$-s-strongly stationary in $P_{\kappa_x}x$ by Theorem \ref{theorem_induction}(3) and furthermore
\[d_\zeta(d_{\zeta_A}(C_A)\cap d_{\zeta_B}(C_B))\subseteq d_\zeta(C_A)\cap d_\zeta(C_B).\]
\end{proof}

\begin{theorem} \label{theorem_base_characterization}
    Suppose that $0 < \xi < \kappa$. Then the following are equivalent:
    \begin{enumerate}
        \item $\B_\xi$ is a base for $\tau_\xi$;
        \item for every $\zeta \leq \xi$, every $x \in P_\kappa X$, and 
        every $A \subseteq P_\kappa X$, if $A$ is $\zeta$-strongly 
        stationary in $P_{\kappa_x} x$, then $A$ is $\zeta$-s-strongly 
        stationary in $P_{\kappa_x} x$.
    \end{enumerate}
\end{theorem}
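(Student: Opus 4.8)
The plan is to reformulate ``$\B_\xi$ is a base'' in terms of isolated points and then match that up with condition (2). Recall from Lemma \ref{lemma_base} that the sets $I \cap d_\zeta(A_0) \cap \cdots \cap d_\zeta(A_{n-1})$ (with $I \in \B_0$ and $\zeta < \xi$) already form a base for $\tau_\xi$, so $\B_\xi$ is a base iff each such set is a union of members of $\B_\xi = \B_0 \cup \{d_\eta(A) : \eta < \xi,\ A \subseteq P_\kappa X\}$. Using Lemma \ref{lemma_factor_out_d_xi} to combine derived sets at distinct levels, an induction on $n$ together with a routine interval-absorption step (analogous to the ordinal case of \cite{MR3894041}) reduces this to a single task: for every $\zeta < \xi$, all $S, T \subseteq P_\kappa X$, and every $x \in d_\zeta(S) \cap d_\zeta(T)$, find $\eta < \xi$ and $A$ with $x \in d_\eta(A) \subseteq d_\zeta(S) \cap d_\zeta(T)$. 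If $x$ is \emph{not} isolated in $\tau_\xi$, i.e.\ $P_{\kappa_x}x$ is $\xi$-s-strongly stationary, this is automatic: then $P_{\kappa_x}x$ is $(\zeta+1)$-s-strongly stationary, so by Corollary \ref{corollary_weak_club} and Theorem \ref{theorem_induction}(3) the set $d_\zeta(S) \cap d_\zeta(T)$ is $\zeta$-s-strongly stationary in $P_{\kappa_x}x$, and one takes $\eta = \zeta$, $A = d_\zeta(S) \cap d_\zeta(T)$ (using $d_\zeta(d_\zeta(S) \cap d_\zeta(T)) \subseteq d_\zeta(S) \cap d_\zeta(T)$). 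Hence $\B_\xi$ is a base for $\tau_\xi$ iff $\{x\} \in \B_\xi$ for every $\tau_\xi$-isolated $x$; when $\kappa_x$ is a successor cardinal this holds trivially ($\{x\}$ is then an interval, by Corollary \ref{corollary_successors_are_isolated}), so the content lies entirely at isolated $x$ with $\kappa_x$ a limit cardinal, where it amounts to: $\{x\} = d_\eta(A)$ for some $\eta < \xi$ and $A$.

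Two bookkeeping facts about the iterated hierarchies will be used throughout. First: for any $x$ with $\kappa_x$ a limit cardinal, the least ordinal $\xi_x$ such that $P_{\kappa_x}x$ is not $\xi_x$-s-strongly stationary is a successor, say $\xi_x = \rho_x + 1$; this follows from a short argument with Theorem \ref{theorem_induction}(2) in the spirit of Proposition \ref{proposition_kappa_x_plus_1}. Second: if $P_{\kappa_x}x$ is $(\zeta+1)$-s-strongly stationary and $C$ is $\zeta$-s-strongly stationary in $P_{\kappa_x}x$, then $d_\zeta(C) \cap P_{\kappa_x}x \neq \emptyset$ (it is $(\zeta+1)$-s-weak club, by Corollary \ref{corollary_weak_club}); consequently, for isolated $x$ with $\kappa_x$ a limit cardinal, any derived set with $d_\eta(A) = \{x\}$ must have $\eta = \rho_x$, since for $\eta < \rho_x$ the set $d_\eta(A)$, being $(\eta+1)$-s-weak club, already meets $P_{\kappa_x}x$. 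Under hypothesis (2) one gets, by the same reflection fact, that $\eta$-strong and $\eta$-s-strong stationarity coincide at every level $\eta$ strictly below the critical level $\xi_x$ — this is where (2) is consumed — and moreover that any single-set failure of $(\rho_x+1)$-strong stationarity at $x$ already occurs at level exactly $\rho_x$.

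For $(1) \Rightarrow (2)$: assume $\B_\xi$ is a base and, for contradiction, pick the least $\zeta \le \xi$ and an $x$, $A$ with $A$ $\zeta$-strongly stationary but not $\zeta$-s-strongly stationary in $P_{\kappa_x}x$. Then $\kappa_x$ is a limit cardinal; since $P_{\kappa_x}x \supseteq A$ is $\zeta$-strongly stationary, minimality of $\zeta$ makes it $\zeta'$-s-strongly stationary for all $\zeta' < \zeta$, and if it were also $\zeta$-s-strongly stationary then, running Theorem \ref{theorem_induction}(3) on pairs $S, T$ that are $\zeta'$-s-strongly stationary ($\zeta' < \zeta$) and feeding the resulting $d_{\zeta'}(S) \cap d_{\zeta'}(T) \cap P_{\kappa_x}x$ into the $\zeta$-strong stationarity of $A$ (invoking minimality once more), one would conclude that $A$ is $\zeta$-s-strongly stationary, a contradiction. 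Hence $\xi_x = \zeta = \rho + 1$, so $x$ is isolated in $\tau_\xi$, and by the first two paragraphs $\{x\} = d_\rho(B)$ with $B$ $\rho$-s-strongly stationary in $P_{\kappa_x}x$ and with no $y \prec x$ reflecting $B$ at level $\rho$. But $A$ is $(\rho+1)$-strongly stationary and $B$ is $\rho$-strongly stationary in $P_{\kappa_x}x$, so some $z \in A \subseteq P_{\kappa_x}x$ has $B$ $\rho$-strongly stationary in $P_{\kappa_z}z$; since level $\rho < \zeta$ lies below the failure threshold, (2) holds there, so $B$ is $\rho$-s-strongly stationary in $P_{\kappa_z}z$ — contradicting that $B$ does not reflect below $x$.

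For $(2) \Rightarrow (1)$: assume (2); by the first paragraph it suffices, for each $\tau_\xi$-isolated $x$ with $\kappa_x$ a limit cardinal, to exhibit $\eta < \xi$ and $A$ with $d_\eta(A) = \{x\}$. Here $\xi_x = \rho + 1 \le \xi$; by (2) $P_{\kappa_x}x$ is not $(\rho+1)$-strongly stationary, and by the second paragraph the single-set failure occurs at level exactly $\rho$: there is $U \subseteq P_{\kappa_x}x$, $\rho$-strongly stationary in $P_{\kappa_x}x$, with no $y \prec x$ reflecting it at level $\rho$. By (2), $U$ is $\rho$-s-strongly stationary in $P_{\kappa_x}x$ (so $x \in d_\rho(U)$) and still has no such reflection, so $d_\rho(U) \cap P_{\kappa_x}x = \emptyset$. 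One then checks that every $w \in d_\rho(U)$ satisfies $w \subseteq x$ (since $\rho$-s-strong stationarity refines $\prec$-cofinality, forcing $w = \bigcup(U \cap P_{\kappa_w}w) \subseteq \bigcup U \subseteq x$), and, after replacing $U$ by a suitable $\rho$-s-strongly stationary subset in ``normal form'', that no proper subset of $x$ lies in $d_\rho(U)$; hence $d_\rho(U) = \{x\} \in \B_\xi$, as required. I expect this last step — sharpening $d_\rho(U)$ to exactly $\{x\}$ by ruling out ``fat'' proper subsets $w$ of $x$ (those with $|w| \ge \kappa_x$) as spurious limit points — to be the main technical obstacle, together with the hierarchy bookkeeping of the second paragraph; the remaining parts of the argument run parallel to the ordinal treatment in \cite{MR3894041}.
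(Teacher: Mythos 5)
Your proposal follows essentially the same architecture as the paper's proof: reduce via Lemma \ref{lemma_base} to the basic sets $I\cap d_{\xi_0}(A_0)\cap\cdots\cap d_{\xi_{n-1}}(A_{n-1})$; handle non-isolated $x$ by observing that such a set is a $\zeta$-s-weak club in $P_{\kappa_x}x$ (so $d_\zeta$ of it is a member of $\B_\xi$ sitting inside it); and in the forward direction take a minimal counterexample $\zeta$, show $P_{\kappa_x}x$ is not $\zeta$-s-strongly stationary (your argument is the paper's Claim almost verbatim), conclude $x$ is isolated, and derive a contradiction from the fact that any $d_{\xi_0}(B)\ni x$ must pick up a reflection point $y\prec x$. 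Your bookkeeping (that the least level of failure $\xi_x$ is a successor $\rho+1$, and that the level of a derived set isolating $x$ must be exactly $\rho$) is correct and is implicit in the paper.

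The genuine gap is the one you flag yourself and do not close: in $(2)\Rightarrow(1)$ you must exhibit, for an isolated $x$ with $\kappa_x$ a limit cardinal, a \emph{single} set $U$ and level $\rho$ with $d_\rho(U)=\{x\}$ \emph{exactly} (since $\{x\}$ is open, a base must contain $\{x\}$ itself). Your argument gives $x\in d_\rho(U)$, $d_\rho(U)\cap P_{\kappa_x}x=\emptyset$, and $d_\rho(U)\subseteq\{w\st w\subseteq x\}$, but says nothing that excludes the ``fat'' proper subsets $w\subsetneq x$ with $\kappa_x\le|w|<\kappa$ — and such $w$ do in general lie in $d_\rho(U)$ for a generic choice of $U$ (e.g.\ $w=x\setminus\{\gamma\}$ for $\gamma\in x\setminus\kappa$ typically satisfies $P_{\kappa_w}w\subseteq P_{\kappa_x}x$ with $U\cap P_{\kappa_w}w$ still $\rho$-s-strongly stationary there). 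Deferring this to an unspecified ``normal form'' for $U$ leaves the crux of the backward direction unproven. Note also that the same issue infects your ``routine interval-absorption'' step in the non-isolated case, which you do not flag: you need $d_\zeta(A)\subseteq(a,x]$, and fat $w\subseteq x$ again violate this. For what it is worth, the paper's own proof passes over exactly this point by asserting $d_\zeta(A)=\{x\}$ and $d_\zeta(U)\subseteq U$ outright, so you have correctly located where the real content lies; but as a standalone argument your proposal is incomplete precisely there, and completing it requires an actual construction (a set of ``$\strsub$-generators of $x$'' determined, say, by their intersections with $\kappa$ via a fixed enumeration of $x$) rather than the reflection bookkeeping that occupies the rest of your write-up.
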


\begin{proof}
    For the forward direction, suppose that (2) fails, and let 
    $\zeta$, $x$, and $A$ form a counterexample, with $\zeta$ minimal 
    among all such counterexamples. Note that we must have 
    $\zeta > 0$.

    \begin{claim}
        $P_{\kappa_x} x$ is not $\zeta$-s-strongly stationary.
    \end{claim}

    \begin{proof}
        Suppose otherwise. We will show that $A$ is in fact 
        $\zeta$-s-strongly stationary, contradicting our choice 
        of $A$. By Corollary \ref{corollary_filter_base}, it 
        suffices to show that, for all $\eta < \zeta$ and every 
        $\eta$-s-weak club $C$ in $P_{\kappa_x} x$, we have 
        $A \cap C \neq \emptyset$. Fix such $\eta$ and $C$.
        Then $C$ is $\eta$-s-strongly stationary in $P_{\kappa_x} x$ 
        and hence, by the minimality of $\zeta$, $\eta$-strongly 
        stationary in $P_{\kappa_x} x$. Thus, since $A$ is $\zeta$-strongly stationary, there is $y \in A$ such that $A$ is 
        $\eta$-strongly stationary in $P_{\kappa_y} y$ and hence, 
        again by the minimality of $\zeta$, $\eta$-s-strongly stationary 
        in $P_{\kappa_y} y$. But then, since $C$ is an $\eta$-s-weak club 
        in $P_{\kappa_x} x$, we have $y \in C \cap A$, as desired.
    \end{proof}

    We can therefore fix an $\eta < \zeta$ and sets $S,T \subseteq 
    P_{\kappa_x} x$ such that $S$ and $T$ are both $\eta$-s-strongly 
    stationary in $P_{\kappa_x} x$ but there is no $y \in P_{\kappa_x} x$ 
    such that $S$ and $T$ are both $\eta$-s-strongly stationary in 
    $P_{\kappa_x} x$. Then we have $d_\eta(S) \cap d_\eta(T) = \{x\}$, 
    and hence $\{x\} \in \tau_\xi$. To show that (1) fails, it thus 
    suffices to show that $\{x\} \notin \B_\xi$.

    Since $P_{\kappa_x} x$ is $1$-strongly stationary, it follows that 
    $\kappa_x$ is a limit cardinal, and hence $\{x\} \notin \B_0$. 
    Now suppose that $B \subseteq P_{\kappa_x} x$, $\xi_0 < \xi$, and 
    $x \in d_{\xi_0}(B)$. Since $P_{\kappa_x} x$ is not $\zeta$-s-strongly 
    stationary, it follows that $\xi_0 < \zeta$ and $B$ is $\xi_0$-s-stationary 
    in $P_{\kappa_x} x$. By minimality of $\zeta$, $B$ is $\xi_0$-stationary 
    in $P_{\kappa_x} x$, so, since $P_{\kappa_x} x$ is $\zeta$-strongly 
    stationary, there is $y \in P_{\kappa_x} x$ such that $B$ is 
    $\xi_0$-strongly stationary in $P_{\kappa_y} y$. Again by minimality of 
    $\zeta$, $B$ is $\xi_0$-s-strongly stationary in $P_{\kappa_y} y$, 
    so $y \in d_{\xi_0}(B)$. It follows that $\{x\} \notin \B_\xi$.

    For the backward direction, suppose that (2) holds, and fix 
    $x \in P_\kappa X$, $I \in \B_0$, $0 < n < \omega$, ordinals 
    $\xi_0, \ldots, \xi_{n-1} < \xi$, and sets $A_0, \ldots, A_{n-1} 
    \subseteq P_{\kappa_x} x$ such that
    \[
      x \in I \cap d_{\xi_0}(A_0) \cap \ldots \cap d_{\xi_{n-1}}(A_{n-1}).
    \]
    Let $\zeta := \max\{\xi_0, \ldots, \xi_{n-1}\} < \xi$. It follows 
    that $P_{\kappa_x}$ is $\zeta$-s-strongly stationary. If 
    $P_{\kappa_x} x$ is not $(\zeta + 1)$-strongly stationary, then 
    there is $A \subseteq P_{\kappa_x} x$ such that $d_\zeta(A) = \{x\}$. 
    We can therefore assume that $P_{\kappa_x} x$ is $(\zeta + 1)$-strongly 
    stationary and hence, by (2), $(\zeta+1)$-s-strongly stationary. 
    But then it follows that $I \cap d_{\xi_0}(A_0) \cap \ldots \cap 
    d_{\xi_{n-1}}(A_{n-1})$ is $\zeta$-s-weak club in $P_{\kappa_x} x$. 
    In particular, it is $\zeta$-s-strongly stationary in $P_{\kappa_x} x$, 
    so 
    \[
      x \in d_\zeta(I \cap d_{\xi_0}(A_0) \cap \ldots \cap d_{\xi_{n-1}}(A_{n-1})) 
      \subseteq I \cap d_{\xi_0}(A_0) \cap \ldots \cap d_{\xi_{n-1}}(A_{n-1}),
    \]
    and $d_\zeta(I \cap d_{\xi_0}(A_0) \cap \ldots \cap d_{\xi_{n-1}}(A_{n-1})) 
    \in \B_\xi$. Therefore, $\B_\xi$ is a base for $\tau_\xi$.
\end{proof}

\subsection{Consequences of $\Pi^1_\xi$-indescribability}\label{section_indescribability}

In this section we establish the consistency of the $\xi$-s-strong stationarity of $P_{\kappa_x}x$, for $\xi \leq \kappa_x$, using a two-cardinal version of transfinite indescribability. 

The classical notion of $\Pi^m_n$-indescribability studied by Levy \cite{MR0281606} was generalized to the two-cardinal setting in a set of handwritten notes by Baumgartner (see \cite[Section 4]{MR0808767}). More recently, various transfinite generalizations of classical $\Pi^1_n$-indescribability, involving certain infinitary formulas have been studied in the cardinal context \cite{MR3894041, MR4094556, MR3416912, CodyHigherIndescribability, MR4206111, MR4594301} and in the two-cardinal context \cite{MR4082998}.

Let us review the definition of $\Pi^1_\xi$-indescribability in the two-cardinal context used in \cite{MR4082998}. For the reader's convenience, we review the notion of $\Pi^1_\xi$ formula introduced in \cite{MR3894041}. Recall that a formula of second-order logic is $\Pi^1_0$, or equivalently $\Sigma^1_0$, if it does not have any second-order quantifiers, but it may have finitely-many first-order quantifiers and finitely-many first and second-order free variables. Bagaria inductively defined the notion of $\Pi^1_\xi$ formula for any ordinal $\xi$ as follows. A formula is $\Sigma^1_{\xi+1}$ if it is of the form
\[\exists X_0\cdots\exists X_k\varphi(X_0,\ldots,X_k)\]
where $\varphi$ is $\Pi^1_\xi$, and a formula is $\Pi^1_{\xi+1}$ if it is of the form 
\[\forall X_0\cdots\forall X_k\varphi(X_0,\ldots, X_k)\]
where $\varphi$ is $\Sigma^1_\xi$. If $\xi$ is a limit ordinal, we say that a formula is $\Pi^1_\xi$ if it is of the form 
\[\bigwedge_{\zeta<\xi}\varphi_\zeta\]
where $\varphi_\zeta$ is $\Pi^1_\zeta$ for all $\zeta<\xi$ and the infinite conjunction has only finitely-many free second-order variables. We say that a formula is $\Sigma^1_\xi$ if it is of the form
\[\bigvee_{\zeta<\xi}\varphi_\zeta\]
where $\varphi_\zeta$ is $\Sigma^1_\zeta$ for all $\zeta<\xi$ and the infinite disjunction has only finitely-many free second-order variables.

The two-cardinal definition of $\Pi^1_\xi$-indescribability below uses the following two-cardinal version of the usual $V_\alpha$-hierarchy below a fixed cardinal $\kappa$. Suppose $\kappa$ is an uncountable regular cardinal and $X$ is a set of ordinals with $|X|\geq\kappa$. For $\alpha\leq\kappa$ we define
\begin{align*}
V_0(\kappa,X)&=X,\\
V_{\alpha+1}(\kappa,X)&=P_\kappa(V_\alpha(\kappa,X))\cup V_\alpha(\kappa,X)\text{ and}\\
V_\alpha(\kappa,X)&=\bigcup_{\eta<\alpha} V_\alpha(\kappa,X)\text{ if $\alpha$ is a limit.}
\end{align*}
Clearly $V_\kappa\subseteq V_\kappa(\kappa,X)$ and if $X$ is transitive then so is $V_\alpha(\kappa,X)$ for $\alpha\leq\kappa$. Furthermore, both $P_\kappa X$ and $P_\kappa X\times P_\kappa X$ are subsets of $V_\kappa(\kappa,X)$. For more regarding the expressive power of $\Pi^1_\xi$ formulas over structures of the form $(V_\kappa(\kappa,X),\in,R_0,\ldots,R_{n-1})$, where $R_0,\ldots,R_{n-1}\subseteq V_\kappa(\kappa,X)$, one may consult \cite[Section 3]{MR1635559} or \cite{MR4082998}.

\begin{definition}[{\cite{MR4082998}}]\label{definition_indescribability}
For $\xi<\kappa$ we say that $S\subseteq P_\kappa X$ is \emph{$\Pi^1_\xi$-indescribable in $P_\kappa X$} if for any $R_0,\ldots,R_{n-1}\subseteq V_\kappa(\kappa,X)$ and any $\Pi^1_\xi$ sentence $\varphi$ such that 
\[(V_\kappa(\kappa,X),\in,R_0,\ldots,R_{n-1})\models\varphi,\] there is an $x\in S$ such that $x\cap\kappa=\kappa_x$ and
\[(V_{\kappa_x}(\kappa_x,x),\in,R_0\cap V_{\kappa_x}(\kappa_x,x),\ldots,R_{n-1}\cap V_{\kappa_x}(\kappa_x,x))\models\varphi.\]
The collection
\[\Pi^1_\xi(\kappa,X)=\{A\subseteq P_\kappa X\st \textrm{$A$ is not $\Pi^1_\xi$-indescribable in $P_\kappa X$}\}\]
is called the \emph{$\Pi^1_\xi$-indescribability ideal on $P_\kappa X$}.
\end{definition}

Standard arguments, which we omit, establish the consistency of two-cardinal indescribability from supercompactness.

\begin{proposition} \label{proposition_supercompact}
Suppose $\kappa$ is $\lambda$-supercompact where $\kappa\leq\lambda$ and $\lambda^{<\kappa}=\lambda$. Then $P_\kappa\lambda$ is $\Pi^1_\xi$-indescribable for all $\xi<\kappa$. Furthermore, the set 
\[\{x\in P_\kappa\lambda\st \textrm{$\kappa_x=x\cap\kappa$ and $P_{\kappa_x}x$ is $\Pi^1_\xi$-indescribable for all $\xi<\kappa_x$}\}\]
is in any normal measure $U$ on $P_\kappa\lambda$.
\end{proposition}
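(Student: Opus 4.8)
The plan is to run the standard reflection argument via a normal fine ultrafilter. Fix a normal fine ultrafilter $U$ on $P_\kappa\lambda$ witnessing $\lambda$-supercompactness and let $j:V\to M\cong\Ult(V,U)$ be the associated embedding, so that $\crit(j)=\kappa$, ${}^\lambda M\subseteq M$, and the seed $s:=[\id]_U=j[\lambda]$ lies in $M$, has order type $\lambda$, and — since $j(\alpha)=\alpha$ for $\alpha<\kappa$ — satisfies $s\cap j(\kappa)=\kappa$; in particular $\kappa_s=s\cap j(\kappa)$ (as computed in $M$) and $s\in P_{j(\kappa)}j(\lambda)$. A short induction on $\alpha\leq\kappa$ shows that $j\restriction V_\alpha(\kappa,\lambda)$ is a bijection onto $V_\alpha(\kappa,s)^M$: every $x$ added at a successor step has $|x|<\kappa$, so $j(x)=j[x]$, and conversely every subset of $V_\alpha(\kappa,s)^M$ of size $<\kappa$ equals $j[w]$ for a unique $w\in P_\kappa(V_\alpha(\kappa,\lambda))$. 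Hence $\theta:=j\restriction V_\kappa(\kappa,\lambda)$ is an $\in$-isomorphism from $V_\kappa(\kappa,\lambda)$ onto $V_{\kappa_s}(\kappa_s,s)^M$, it lies in $M$ (being, after an enumeration, a $\lambda$-sequence of elements of $M$), it carries each $R_i\subseteq V_\kappa(\kappa,\lambda)$ to $j(R_i)\cap V_{\kappa_s}(\kappa_s,s)=j[R_i]$, and it respects the auxiliary notions used in Definition \ref{definition_indescribability} — the operation $P_\kappa$, the map $y\mapsto\kappa_y$, and the hierarchy $V_\bullet(\kappa_y,y)$ for $y\in P_\kappa\lambda$ (noting that $V_{\kappa_y}(\kappa_y,y)\subseteq V_\kappa(\kappa,\lambda)$).

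The second ingredient is an absoluteness observation. Because $\lambda^{<\kappa}=\lambda$, one checks by induction that $|V_\kappa(\kappa,\lambda)|\leq\lambda$, so by ${}^\lambda M\subseteq M$ the models $V$ and $M$ have exactly the same subsets of $V_\kappa(\kappa,\lambda)$, and therefore the same subsets of any of its subsets of size $\leq\lambda$. Since, in addition, every $\Pi^1_\xi$ formula with $\xi<\kappa$ is an element of $V_\kappa=H_\kappa$ and the syntactic notion of ``$\Pi^1_\xi$ formula'' is absolute, it follows that $j(\varphi)=\varphi$ for every such $\varphi$, and that for any $x\in P_\kappa\lambda$, any $\vec R\subseteq V_\kappa(\kappa,\lambda)$, and any $\Pi^1_\xi$ sentence $\varphi$ with $\xi<\kappa$, the second-order satisfaction relation $(V_{\kappa_x}(\kappa_x,x),\in,\vec R\cap V_{\kappa_x}(\kappa_x,x))\models\varphi$ is absolute between $V$ and $M$. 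Consequently the statement ``$P_\kappa\lambda$ is $\Pi^1_\xi$-indescribable'' is absolute between $V$ and $M$ for each fixed $\xi<\kappa$.

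With these in hand the first assertion goes as follows. Fix $\xi<\kappa$, predicates $R_0,\ldots,R_{n-1}\subseteq V_\kappa(\kappa,\lambda)$, and a $\Pi^1_\xi$ sentence $\varphi$ with $(V_\kappa(\kappa,\lambda),\in,\vec R)\models\varphi$. By the absoluteness observation this also holds in $M$, and applying $j$ (with $j(\varphi)=\varphi$) gives $(V_{j(\kappa)}(j(\kappa),j(\lambda)),\in,j(\vec R))\models\varphi$ in $M$. Transporting through $\theta^{-1}\in M$ yields $(V_{\kappa_s}(\kappa_s,s),\in,j(\vec R)\cap V_{\kappa_s}(\kappa_s,s))\models\varphi$ in $M$; thus $s$ witnesses in $M$ that there is $x\in P_{j(\kappa)}j(\lambda)$ with $\kappa_x=x\cap j(\kappa)$ reflecting $\varphi$ with respect to $j(\vec R)$. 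Pulling this back via the elementarity of $j$ produces $x\in P_\kappa\lambda$ with $\kappa_x=x\cap\kappa$ and $(V_{\kappa_x}(\kappa_x,x),\in,\vec R\cap V_{\kappa_x}(\kappa_x,x))\models\varphi$, so $P_\kappa\lambda$ is $\Pi^1_\xi$-indescribable. For the ``furthermore'' clause, let $T$ be the set displayed in the statement. By \L o\'s's theorem, $T\in U$ iff $s\in j(T)$, and by elementarity $j(T)$ consists of those $x\in P_{j(\kappa)}j(\lambda)$ with $\kappa_x=x\cap j(\kappa)$ for which $M$ thinks $P_{\kappa_x}x$ is $\Pi^1_\xi$-indescribable for all $\xi<\kappa_x$. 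Since $\kappa_s=\kappa$, and since $\theta\in M$ transports ``$P_\kappa s$ is $\Pi^1_\xi$-indescribable'' to ``$P_\kappa\lambda$ is $\Pi^1_\xi$-indescribable'' inside $M$, and the latter holds in $M$ by the absoluteness observation together with the first assertion (proved in $V$), we conclude $s\in j(T)$, i.e.\ $T\in U$.

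The genuinely load-bearing step is the absoluteness observation: one must be careful that ${}^\lambda M\subseteq M$ together with $\lambda^{<\kappa}=\lambda$ really pins down the second-order part of satisfaction for \emph{transfinite} $\Pi^1_\xi$ formulas over all the auxiliary structures $(V_{\kappa_x}(\kappa_x,x),\in,\ldots)$ that occur — which works precisely because their domains are contained in $V_\kappa(\kappa,\lambda)$ and hence have size $\leq\lambda$ — and that the relevant formulas lie in $M$ and are recognized there as $\Pi^1_\xi$ formulas. The remaining bookkeeping, verifying that $\theta=j\restriction V_\kappa(\kappa,\lambda)$ is an isomorphism in $M$ respecting $P_\kappa$, the map $y\mapsto\kappa_y$, and the $V_\bullet(\kappa_x,x)$-hierarchy, is routine but should be carried out with some care, as it is exactly what allows one to pass between statements about $P_\kappa\lambda$ and statements about $P_{\kappa_s}s$ inside $M$.
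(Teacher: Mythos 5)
Your proof is correct, and it is precisely the ``standard argument'' that the paper explicitly omits: reflect via the ultrapower embedding $j$ of a normal fine measure, using that $|V_\kappa(\kappa,\lambda)|=\lambda$ and ${}^\lambda M\subseteq M$ make second-order satisfaction absolute and that $j\restriction V_\kappa(\kappa,\lambda)\in M$ is an isomorphism onto $V_{\kappa_s}(\kappa_s,s)^M$ for the seed $s=j[\lambda]$. The only cosmetic quibble is that the intermediate step passing through $(V_{j(\kappa)}(j(\kappa),j(\lambda)),\in,j(\vec R))$ is unnecessary --- one should transport the $M$-truth of $(V_\kappa(\kappa,\lambda),\in,\vec R)\models\varphi$ directly through $\theta$ --- but all the needed ingredients are present and the conclusion is right.
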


Abe \cite[Lemma 4.1]{MR1635559} showed that if $P_\kappa X$ is $\Pi^1_n$-indescribable then $\Pi^1_n(\kappa,X)$ is a strongly normal ideal on $P_\kappa X$. As pointed out in \cite{CodyWhite}, a straightforward application of the arguments for \cite[Lemma 4.1]{MR1635559} and \cite[Proposition 4.4]{MR3894041}, which is left to the reader, establishes the following.

\begin{proposition} \label{proposition_strongly_normal_ind}
For $\xi<\kappa$, if $P_\kappa X$ is $\Pi^1_\xi$-indescribable then $\Pi^1_\xi(\kappa,X)$ is a strongly normal ideal on $P_\kappa X$.
\end{proposition}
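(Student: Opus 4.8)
The plan is to follow the template of the proofs that the $\Pi^1_n$-indescribability ideal is normal, in the forms given by Abe \cite[Lemma 4.1]{MR1635559} and Bagaria \cite[Proposition 4.4]{MR3894041}, adapting them to the transfinite two-cardinal setting. First I would record that $\Pi^1_\xi(\kappa,X)$ is a nontrivial ideal: nontriviality is precisely the hypothesis that $P_\kappa X$ is $\Pi^1_\xi$-indescribable, closure under subsets is immediate from Definition \ref{definition_indescribability}, and closure under finite unions is the usual amalgamation---if $A_0,A_1$ are not $\Pi^1_\xi$-indescribable, as witnessed by pairs $(\bar R^0,\varphi_0)$ and $(\bar R^1,\varphi_1)$ of tuples of predicates on $V_\kappa(\kappa,X)$ and $\Pi^1_\xi$ sentences, then $(\bar R^0\concat\bar R^1,\ \varphi_0\wedge\varphi_1)$ witnesses that $A_0\cup A_1$ is not $\Pi^1_\xi$-indescribable, using that a conjunction of two $\Pi^1_{\le\xi}$ sentences is equivalent to a $\Pi^1_\xi$ sentence and that restriction to $V_{\kappa_x}(\kappa_x,x)$ commutes with concatenation and conjunction.

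For strong normality it suffices, directly from the definition, to show: whenever $S\in\Pi^1_\xi(\kappa,X)^+$ and $f\colon S\to P_\kappa X$ satisfies $f(x)\prec x$ for all $x\in S$, some fiber $S_y:=\{x\in S:f(x)=y\}$ is again $\Pi^1_\xi$-indescribable (then $f\restriction S_y$ is constant). Suppose not. For each $y\in P_\kappa X$ fix predicates $\bar R^y$ on $V_\kappa(\kappa,X)$ and a $\Pi^1_\xi$ sentence $\varphi_y$ witnessing that $S_y$ is not $\Pi^1_\xi$-indescribable. Since $\xi<\kappa$, there is---as in \cite[Proposition 4.4]{MR3894041}---a single $\Pi^1_\xi$ formula $\varphi^*$ that is universal for $\Pi^1_\xi$ sentences over structures of the form $(V_\kappa(\kappa,X),\in,\cdot)$; absorbing the description of $(\bar R^y,\varphi_y)$ into one predicate, I may assume that $\varphi_y=\varphi^*$ and that the witness for $S_y$ is a single relation $R_y\subseteq V_\kappa(\kappa,X)$.

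Now amalgamate. Using that $P_\kappa X\subseteq V_\kappa(\kappa,X)$ and that $V_\kappa(\kappa,X)$ is closed under pairing, regard $R:=\{(y,a):y\in P_\kappa X\text{ and }a\in R_y\}$ and $D:=P_\kappa X$ as predicates on $V_\kappa(\kappa,X)$, and let $\psi$ be the sentence asserting that for every $y\in D$ the section $(R)_y=\{a:(y,a)\in R\}$ satisfies $\varphi^*$; this is obtained by relativizing $\varphi^*$ to $(R)_y$ and prefixing the first-order quantifier $\forall y\in D$, which---after distributing this quantifier through the infinitary conjunction in the limit case---remains $\Pi^1_\xi$. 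By choice of the $R_y$ we have $(V_\kappa(\kappa,X),\in,R,D)\models\psi$. By $\Pi^1_\xi$-indescribability of $S$, fix $x\in S$ with $x\cap\kappa=\kappa_x$ such that the corresponding restricted structure satisfies $\psi$; as in \cite[Lemma 4.1]{MR1635559}, the restriction of $D$ is $P_{\kappa_x}x$ and the $y$-section of $R\restriction V_{\kappa_x}(\kappa_x,x)$ is $R_y\cap V_{\kappa_x}(\kappa_x,x)$ for $y\in P_{\kappa_x}x$. Put $y_0:=f(x)$; since $f(x)\prec x$ we have $y_0\in P_{\kappa_x}x$, so instantiating the reflected copy of $\psi$ at $y=y_0$ gives $(V_{\kappa_x}(\kappa_x,x),\in,R_{y_0}\cap V_{\kappa_x}(\kappa_x,x))\models\varphi^*$, i.e., by universality of $\varphi^*$, $x$ reflects $\varphi_{y_0}$ with respect to $\bar R^{y_0}$. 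But $x\in S$ and $f(x)=y_0$ give $x\in S_{y_0}$, contradicting the choice of $(\bar R^{y_0},\varphi_{y_0})$. Hence some fiber $S_y$ is $\Pi^1_\xi$-indescribable, and $\Pi^1_\xi(\kappa,X)$ is strongly normal.

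I expect the only real work to be the bookkeeping imported from the two cited lemmas, not the structure of the argument above. The main obstacle is twofold: (i) producing a universal $\Pi^1_\xi$ formula $\varphi^*$ over the two-cardinal structures $(V_\kappa(\kappa,X),\in,\cdot)$ and, after prefixing $\forall y\in D$, verifying that the amalgamated sentence $\psi$ genuinely lies in $\Pi^1_\xi$---this is where the limit case forces one to push the first-order quantifier inside each conjunct $\varphi^*_\zeta$; and (ii) checking the two-cardinal coding and restriction facts---that $D$ and the sections of $R$ restrict correctly to $V_{\kappa_x}(\kappa_x,x)$ so that the reflected structure is exactly the one in Definition \ref{definition_indescribability}, and that $f(x)\prec x$ places $f(x)$ in $P_{\kappa_x}x$, the range of the reflected quantifier. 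Both points are exactly the content of \cite[Lemma 4.1]{MR1635559} and \cite[Proposition 4.4]{MR3894041}, so in the write-up I would cite them rather than reproduce their proofs.
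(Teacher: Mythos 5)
Your argument is correct and is exactly the route the paper intends: the paper in fact omits the proof entirely, asserting only that it is ``a straightforward application of the arguments for \cite[Lemma 4.1]{MR1635559} and \cite[Proposition 4.4]{MR3894041}, which is left to the reader,'' and those are precisely the two ingredients (the two-cardinal restriction bookkeeping and the universal $\Pi^1_\xi$ formula together with closure of $\Pi^1_\xi$ under first-order quantification) that you import and combine via the standard fiber/amalgamation reduction of strong normality. The one small imprecision --- that $D=P_\kappa X$ restricts to exactly $P_{\kappa_x}x$ in $V_{\kappa_x}(\kappa_x,x)$ --- is harmless, since all you need is the containment $P_{\kappa_x}x\subseteq D\cap V_{\kappa_x}(\kappa_x,x)$ in order to instantiate the reflected sentence at $f(x)\strsub x$.
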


Next we show that the $\xi$-s-strong stationarity of a set $S$ in $P_{\kappa_x}x$ can be expressed by a $\Pi^1_\xi$ formula.

\begin{lemma}\label{lemma_express}
For all $\xi<\kappa$ there is a $\Pi^1_\xi$ formula $\Phi_\xi(R,S,T)$ with three free second-order variables such that for $x\in P_\kappa X$, a set $A\subseteq P_{\kappa_x}x$ is $\xi$-s-strongly stationary in $P_{\kappa_x}x$ if and only if 
\[(V_{\kappa_x}(\kappa_x,x),\in,A,P_{\kappa_x}x,\strsub_x)\models\Phi_\xi[A,P_{\kappa_x}x,\strsub_x],\]
where $\strsub_x$ denotes the usual strong subset ordering $\strsub$ restricted to $P_{\kappa_x}x$.
\end{lemma}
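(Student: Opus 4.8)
The proof will go by induction on $\xi<\kappa$, paralleling the recursive definition of $\xi$-s-strong stationarity and the syntactic hierarchy of $\Pi^1_\xi$ formulas. The key point is that each clause in the definition of ``$A$ is $\xi$-s-strongly stationary in $P_{\kappa_x}x$'' translates into a bounded-complexity second-order assertion over the structure $\mathcal{M}_x=(V_{\kappa_x}(\kappa_x,x),\in,A,P_{\kappa_x}x,\strsub_x)$, so that quantifying over ``$\zeta$-s-strongly stationary subsets $S,T$ of $P_{\kappa_x}x$'' becomes a second-order universal quantifier, and asserting the existence of a reflecting $y$ becomes a first-order quantifier over $P_{\kappa_x}x$ together with a relativization of $\Phi_\zeta$ to $y$.

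First I would set up the base case. For $\xi=0$, being $0$-s-strongly stationary means being $\strsub$-cofinal in $P_{\kappa_x}x$, and ``$\forall a\in P_{\kappa_x}x\ \exists b\in A\ (a\strsub b)$'' is first-order over $\mathcal{M}_x$ using the predicates $A$, $P_{\kappa_x}x$ and $\strsub_x$; hence it is $\Pi^1_0$. So take $\Phi_0(R,S,T)$ to be this first-order sentence (ignoring $S,T$, or using them trivially so that all three free variables appear — one can always pad with a trivial occurrence).

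For the successor step, suppose $\Phi_\zeta(R,S,T)$ has been constructed for all $\zeta\le\xi$, each $\Phi_\zeta$ being $\Pi^1_\zeta$. By Definition \ref{definition_xi_s_stationary}, $A$ is $(\xi+1)$-s-strongly stationary in $P_{\kappa_x}x$ iff $\kappa_x$ is a limit cardinal and for every $\zeta\le\xi$ and every pair $S,T\subseteq P_{\kappa_x}x$ that are $\zeta$-s-strongly stationary in $P_{\kappa_x}x$ there is $y\in A\cap P_{\kappa_x}x$ with $S,T$ both $\zeta$-s-strongly stationary in $P_{\kappa_y}y$. The statement ``$\kappa_x$ is a limit cardinal'' is first-order over $\mathcal{M}_x$ (one can detect successorhood of $\kappa_x$ via the existence of a $\strsub$-maximal element below $x$, using Corollary \ref{corollary_successors_are_isolated}; this is $\Pi^1_0$). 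For a fixed $\zeta\le\xi$, the assertion is of the form
\[
\forall S\,\forall T\,\bigl[(S,T\text{ are }\zeta\text{-s-str.\ stat.\ in }P_{\kappa_x}x)\to \exists y\in A\cap P_{\kappa_x}x\ (S\restrict y,T\restrict y\text{ are }\zeta\text{-s-str.\ stat.\ in }P_{\kappa_y}y)\bigr].
\]
By the induction hypothesis, ``$S$ is $\zeta$-s-strongly stationary in $P_{\kappa_x}x$'' is expressed by $\Phi_\zeta[S,\cdot,\cdot]$ over $\mathcal{M}_x$, which is $\Pi^1_\zeta$ (hence its negation, appearing in the antecedent, is $\Sigma^1_\zeta$). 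The consequent involves an existential first-order quantifier over $y\in P_{\kappa_x}x$ followed by a relativization of $\Phi_\zeta$ to the substructure coded by $y$: crucially, $V_{\kappa_y}(\kappa_y,y)$ is uniformly definable inside $V_{\kappa_x}(\kappa_x,x)$ from the parameter $y$ (this is the two-cardinal analogue of the fact used throughout \cite{MR3894041, MR4082998} — see \cite[Section 3]{MR1635559} — that the relativization of a $\Pi^1_\zeta$ formula to a definable substructure is again $\Pi^1_\zeta$). So the whole conjunct for a fixed $\zeta$ is $\Pi^1_{\zeta+1}$, hence $\Pi^1_{\xi+1}$. Since $\zeta$ ranges over the finitely-or-infinitely-many ordinals $\le\xi$, one conjoins these; if $\xi+1$ is a successor this is a finite conjunction of $\Pi^1_{\xi+1}$ formulas (still $\Pi^1_{\xi+1}$), and if $\xi+1$ is a limit the conjunction $\bigwedge_{\zeta\le\xi}(\cdots)_\zeta$ is $\Pi^1_{\xi+1}$ by the limit clause of Bagaria's definition — one must check that only finitely many free second-order variables appear, which holds because each conjunct uses only $R,S,T$. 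Conjoining with the $\Pi^1_0$ ``$\kappa_x$ is a limit cardinal'' clause gives the desired $\Pi^1_{\xi+1}$ formula $\Phi_{\xi+1}$.

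For limit $\xi$, $A$ is $\xi$-s-strongly stationary iff $\kappa_x$ is a limit cardinal and for all $\zeta<\xi$, $A$ is $\zeta+1$-s-strongly stationary in $P_{\kappa_x}x$ (unwinding the definition — being $\zeta$-s-strongly stationary for each $\zeta<\xi$ is equivalent, since the condition is monotone decreasing in the exponent in the relevant sense; more directly, $\xi$-s-strong stationarity asks the reflection condition for all $\zeta<\xi$, which is exactly $\bigwedge_{\zeta<\xi}(\zeta{+}1)$-s-strong stationarity). So set $\Phi_\xi:=\bigwedge_{\zeta<\xi}\Phi_{\zeta+1}$, conjoined with the limit-cardinal clause; this is a $\Pi^1_\xi$ formula by the limit clause of the syntactic definition, again using that only $R,S,T$ occur freely throughout.

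**Main obstacle.** The genuinely delicate point is the relativization/absoluteness step in the successor case: one must verify that when $y\in P_{\kappa_x}x$ with $\kappa_y=y\cap\kappa$, the structure $(V_{\kappa_y}(\kappa_y,y),\in,S\cap V_{\kappa_y}(\kappa_y,y),P_{\kappa_y}y,\strsub_y)$ is uniformly interpretable inside $\mathcal{M}_x$ with $y$ as the only new parameter, and that under this interpretation the relativization of a $\Pi^1_\zeta$ formula remains $\Pi^1_\zeta$ (in particular that the second-order quantifiers of $\Phi_\zeta$, now ranging over subsets of $P_{\kappa_y}y$, can be simulated by second-order quantifiers over $\mathcal{M}_x$ restricted by a first-order side condition). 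This is the two-cardinal counterpart of the bookkeeping in \cite[Proposition 2.10 and Section 4]{MR3894041} and is treated for the two-cardinal $V_\kappa(\kappa,X)$-hierarchy in \cite[Section 3]{MR1635559} and \cite{MR4082998}; I would cite those and carry out the (routine but careful) verification that the hierarchy $\alpha\mapsto V_\alpha(\kappa_x,x)$ is $\Delta_1$-definable over $V_{\kappa_x}(\kappa_x,x)$ so the relativizations go through without raising complexity. A secondary bookkeeping issue is ensuring the free-second-order-variable count stays finite at limit stages, which is immediate here since we only ever use $R,S,T$.
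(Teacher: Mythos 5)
Your proposal follows essentially the same route as the paper's proof: induction on $\xi$, with $\strsub$-cofinality expressed by a first-order ($\Pi^1_0$) formula at the base, an infinite conjunction $\bigwedge_{\zeta<\xi}\Phi_\zeta$ at limit stages, and at successor stages a universal second-order quantification over pairs $S,T$ satisfying $\Phi_\zeta$ followed by a first-order existential over $y\in A$ and the relativization of $\Phi_\zeta$ to $P_{\kappa_y}y$; the paper likewise leaves the relativization and complexity bookkeeping to the reader, with the same pointers to \cite{MR1635559} and \cite{MR4082998}. The one slip is in your successor step: you conjoin the reflection clause over all $\zeta\leq\xi$ and claim this is a finite conjunction ``if $\xi+1$ is a successor,'' but $\xi+1$ is always a successor and the conjunction is infinite whenever $\xi\geq\omega$, so it is not literally of the successor syntactic form $\forall\vec{X}\,\varphi$; the paper sidesteps this by taking $\Phi_{\zeta+1}$ to be the \emph{finite} conjunction of $\Phi_\zeta[A,\cdot,\cdot]$ (which by the induction hypothesis already encodes reflection at all levels below $\zeta$) with the single reflection clause at level $\zeta$, and your version is repaired the same way (or by regrouping the infinite conjunction through the largest limit ordinal below $\xi+1$).
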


\begin{proof}
We proceed by induction on $\xi$. We let $\Phi_0(R,S,T)$ be the $\Pi^1_0$ formula
\[(\forall y\in S) (\exists x\in R)\ (y,x)\in T\]
so that $\Phi_0[A,P_{\kappa_x}x,\strsub]$ expresses that $A$ is $0$-s-strongly stationary (i.e.\ $\strsub$-cofinal) in $P_{\kappa_x}x$ over the structure $(V_{\kappa_x}(\kappa_x,x),\in,A,P_{\kappa_x}x,\strsub)$. 

Suppose $\xi$ is a limit ordinal. It is easy to see that $\Phi_\xi = \bigwedge_{\zeta<\xi}\Phi_\zeta$ is as desired.

Suppose $\xi=\zeta+1$. Let $\Phi_\zeta$ be the $\Pi^1_\zeta$-formula obtained from the induction hypothesis. Then for all $x\in P_\kappa X$ a set $A\subseteq P_{\kappa_x}x$ is $\zeta$-s-strongly stationary in $P_{\kappa_x}x$ if and only if 
\[(V_{\kappa_x}(\kappa_x,x),\in,A,P_{\kappa_x}x,\strsub_x)\models\Phi_\zeta[A,P_{\kappa_x}x,\strsub_x].\]
For $x\in P_\kappa X$ we see that $A\subseteq P_{\kappa_x}x$ is $\xi$-s-strongly stationary in $P_{\kappa_x}x$ if and only if
\[\Phi_\zeta[A,P_{\kappa_x}x,\strsub_x] \land (\forall S\subseteq P_{\kappa_x}x)(\forall T\subseteq P_{\kappa_x}x)[\Phi_\zeta[S,P_{\kappa_x}x,\strsub_x]\land\Phi_\zeta[T,P_{\kappa_x}x,\strsub_x]\longrightarrow\]
\[(\exists y\in A) \Phi_\zeta[S\cap P_{\kappa_y}y,P_{\kappa_y}y,\strsub_y]\land \Phi_\zeta[T\cap P_{\kappa_y}y,P_{\kappa_y}y,\strsub_y]]\]
holds in $(V_{\kappa_x}(\kappa_x,x),\in,A,P_{\kappa_x}x,\strsub_x)$.
It is easy to check that the previous formula is equivalent to a $\Pi^1_\xi$ formula, hence the desired formula $\Phi_\xi(R,S,T)$ exists.
\end{proof}

\begin{corollary}\label{corollary_stationarity_from_indescribability}
For $x\in P_\kappa X$ with $\kappa_x=x\cap\kappa$, if $A\subseteq P_{\kappa_x}x$ is $\Pi^1_\xi$-indescribable in $P_{\kappa_x}x$ then $A$ is $\xi+1$-s-strongly stationary in $P_{\kappa_x}x$.
\end{corollary}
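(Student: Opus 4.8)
The plan is to verify directly that $A$ meets the definition of $(\xi+1)$-s-strong stationarity in $P_{\kappa_x}x$ (Definition \ref{definition_xi_s_stationary}(3)), so two things must be checked: that $\kappa_x$ is a limit cardinal, and that for every $\zeta \le \xi$ and every pair $S,T \subseteq P_{\kappa_x}x$ of $\zeta$-s-strongly stationary sets there is $y \in A \cap P_{\kappa_x}x$ in which $S$ and $T$ are both $\zeta$-s-strongly stationary. For the first point I would appeal to the standard fact that $\Pi^1_0$-indescribability of a subset of $P_{\kappa_x}x$ forces $\kappa_x$ to be inaccessible, hence a limit cardinal --- the two-cardinal analogue of the fact that $\Pi^1_0$-indescribable cardinals are inaccessible, which can be extracted from the treatments of two-cardinal indescribability in \cite{MR1635559, MR4082998} --- noting that $A$, being $\Pi^1_\xi$-indescribable, is in particular $\Pi^1_0$-indescribable.

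The substance is the reflection step. Fix $\zeta \le \xi$ and $\zeta$-s-strongly stationary $S,T \subseteq P_{\kappa_x}x$, let $\Phi_\zeta(R,S,T)$ be the $\Pi^1_\zeta$ formula supplied by Lemma \ref{lemma_express}, and consider the structure
\[
  \mathfrak{M} = (V_{\kappa_x}(\kappa_x, x), \in, S, T, P_{\kappa_x}x, \strsub_x).
\]
Let $\psi$ be the conjunction of the two sentences obtained from $\Phi_\zeta$ by substituting, respectively, the predicate for $S$ and the predicate for $T$ into the first argument slot and the predicates for $P_{\kappa_x}x$ and $\strsub_x$ into the remaining two slots. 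Because $S$ and $T$ are $\zeta$-s-strongly stationary in $P_{\kappa_x}x$, Lemma \ref{lemma_express} (applied to each, and using that the extra predicate in $\mathfrak{M}$ does not affect satisfaction of a formula that does not mention it) gives $\mathfrak{M} \models \psi$. Moreover $\psi$ is a Boolean combination of $\Pi^1_\zeta$ formulas, hence (over the relevant structures) equivalent to a $\Pi^1_\zeta$ and a fortiori a $\Pi^1_\xi$ sentence. Applying the $\Pi^1_\xi$-indescribability of $A$ in $P_{\kappa_x}x$ (Definition \ref{definition_indescribability}, with $\kappa_x$ and $x$ in the roles of $\kappa$ and $X$) to $\psi$ produces $y \in A$ with $y \cap \kappa_x = \kappa_y$ such that the restriction of $\mathfrak{M}$ to $V_{\kappa_y}(\kappa_y, y)$ satisfies $\psi$.

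To finish, the plan is to check the restriction identities $P_{\kappa_x}x \cap V_{\kappa_y}(\kappa_y, y) = P_{\kappa_y}y$ and $\strsub_x \cap V_{\kappa_y}(\kappa_y, y) = \strsub_y$ --- whence also $S \cap V_{\kappa_y}(\kappa_y, y) = S \cap P_{\kappa_y}y$ and likewise for $T$ --- so that the restricted structure is exactly $(V_{\kappa_y}(\kappa_y, y), \in, S \cap P_{\kappa_y}y, T \cap P_{\kappa_y}y, P_{\kappa_y}y, \strsub_y)$. By Lemma \ref{lemma_express} applied with $y$ in place of $x$, the assertion that this structure satisfies $\psi$ is exactly the assertion that $S \cap P_{\kappa_y}y$ and $T \cap P_{\kappa_y}y$ are both $\zeta$-s-strongly stationary in $P_{\kappa_y}y$; and since $\zeta$-s-strong stationarity in $P_{\kappa_y}y$ depends only on intersection with $P_{\kappa_y}y$ (an immediate induction on $\zeta$ from the definition), $S$ and $T$ are both $\zeta$-s-strongly stationary in $P_{\kappa_y}y$. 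As $y \in A \cap P_{\kappa_x}x$, this $y$ is the required witness, completing the verification. I expect the only delicate point to be the restriction identities: since $y$ need not be transitive, one must argue by a short rank analysis of the two-cardinal $V_\alpha$-hierarchy --- using crucially that every $a \in P_{\kappa_x}x$ has $|a| < \kappa_x$ and that $y \cap \kappa_x$ is literally the ordinal $\kappa_y$ --- that the restricted predicates collapse to precisely $P_{\kappa_y}y$ and $\strsub_y$, with nothing extraneous slipping in and nothing lost. Everything else follows routinely from Lemma \ref{lemma_express} and the definitions.
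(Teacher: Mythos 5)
Your proposal is correct and follows essentially the same route as the paper's proof: apply Lemma \ref{lemma_express} to express the $\zeta$-s-strong stationarity of $S$ and $T$ by a $\Pi^1_\zeta$ sentence over $(V_{\kappa_x}(\kappa_x,x),\in,S,T,P_{\kappa_x}x,\strsub_x)$, reflect it via the indescribability of $A$ to some $y\in A$ with $\kappa_y=y\cap\kappa_x$, and read off the conclusion from Lemma \ref{lemma_express} applied at $y$. The extra points you flag (that $\kappa_x$ must be a limit cardinal, and that the restricted predicates collapse to $P_{\kappa_y}y$ and $\strsub_y$) are glossed over in the paper's own argument, so your attention to them is a refinement rather than a deviation.
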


\begin{proof}
To show that $A$ is $\xi+1$-s-strongly stationary in $P_{\kappa_x}x$, fix sets $S,T\subseteq P_{\kappa_x}x$ that are $\zeta$-s-strongly stationary where $\zeta\leq \xi$ and let $\Phi_\zeta$ be the $\Pi^1_\zeta$-formula obtained from Lemma \ref{lemma_express}. Since $A$ is $\Pi^1_\xi$-indescribable, it is $\Pi^1_\zeta$-indescribable and the fact that 
\[(V_{\kappa_x}(\kappa_x,x),\in,S,T,P_{\kappa_x}x,\strsub_x)\models\Phi_\zeta[S,P_{\kappa_x}x,\strsub_x]\land\Phi_\zeta[T,P_{\kappa_x}x,\strsub_x]\]
implies that there is some $y\in A\cap P_{\kappa_x}x$ with $\kappa_y=y\cap\kappa$ such that the structure 
\[(V_{\kappa_y}(\kappa_y,y),\in,S\cap P_{\kappa_y}y,T\cap P_{\kappa_y}y,P_{\kappa_y}y,\strsub_y)\] 
satisfies
\[\Phi_\zeta[S\cap P_{\kappa_y}y,P_{\kappa_y}y,\strsub_y]\land\Phi_\zeta[T\cap P_{\kappa_y}y,P_{\kappa_y}y,\strsub_y],\]
and hence $S$ and $T$ are $\zeta$-s-strongly stationary in $y$. Therefore $A$ is $\xi+1$-s-strongly stationary in $P_{\kappa_x}x$.
\end{proof}

\begin{corollary}
For $\xi<\kappa$, if there is an $x\in P_\kappa X$ such that $P_{\kappa_x}x$ is $\Pi^1_\xi$-indescribable then the $\tau_{\xi+1}$-topology on $P_\kappa X$ is not discrete.
\end{corollary}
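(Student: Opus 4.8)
The plan is to obtain the result as an immediate consequence of Corollary~\ref{corollary_stationarity_from_indescribability} together with the characterization, established above, of the non-isolated points of $(P_\kappa X,\tau_\eta)$ as exactly those $x$ for which $P_{\kappa_x}x$ is $\eta$-s-strongly stationary (valid for $\eta\leq\kappa_x$). Fix $x\in P_\kappa X$ witnessing the hypothesis. Since $\Pi^1_\xi$-indescribability is defined in Definition~\ref{definition_indescribability} only for subscripts strictly below the ambient cardinal, and — as in the hypothesis of Corollary~\ref{corollary_stationarity_from_indescribability} — only when $\kappa_x=x\cap\kappa$, asserting that $P_{\kappa_x}x$ is $\Pi^1_\xi$-indescribable in $P_{\kappa_x}x$ already carries with it that $\kappa_x=x\cap\kappa$ and $\xi<\kappa_x$; in particular $\xi+1\leq\kappa_x$, and also $\xi+1<\kappa$ (as $\kappa$ is a limit cardinal), so $\tau_{\xi+1}$ is one of the topologies under consideration.

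First I would apply Corollary~\ref{corollary_stationarity_from_indescribability} to the set $A:=P_{\kappa_x}x$ itself: since $P_{\kappa_x}x$ is $\Pi^1_\xi$-indescribable in $P_{\kappa_x}x$, it follows that $P_{\kappa_x}x$ is $\xi+1$-s-strongly stationary in $P_{\kappa_x}x$. Then, since $\xi+1\leq\kappa_x$, the characterization of non-isolated points applies with $\eta=\xi+1$ and shows that $x$ is not isolated in $(P_\kappa X,\tau_{\xi+1})$. Hence $(P_\kappa X,\tau_{\xi+1})$ has a non-isolated point and is therefore not discrete, which is what we wanted.

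Since this is a two-step deduction from results already in hand, I do not expect any substantial obstacle; the only point demanding care is the level bookkeeping — ensuring $\kappa_x=x\cap\kappa$ and $\xi+1\leq\kappa_x$ so that both invoked results genuinely apply — which, as noted, is built into the meaning of the hypothesis. If one preferred to avoid citing the non-isolated-point characterization, one could instead argue directly, mirroring the proof of that corollary: by Lemma~\ref{lemma_base} an arbitrary basic $\tau_{\xi+1}$-neighborhood of $x$ has the form $U=I\cap d_{\zeta_0}(A_0)\cap\cdots\cap d_{\zeta_{n-1}}(A_{n-1})$ with $I\in\B_0$ and each $\zeta_i\leq\xi$, whence Theorem~\ref{theorem_induction}(1) makes each $A_i$ into a $\zeta_i$-s-strongly stationary subset of $P_{\kappa_x}x$, and then Corollaries~\ref{corollary_weak_club} and~\ref{corollary_xi_plus_1_s_filter_base}, applied using the $\xi+1$-s-strong stationarity of $P_{\kappa_x}x$, yield that $U$ is $\zeta$-s-weak club in $P_{\kappa_x}x$ for $\zeta:=\max\{\zeta_i\st i<n\}$, so in particular $U\neq\{x\}$. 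Either route completes the argument.
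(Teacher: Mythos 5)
Your proposal is correct and is precisely the intended (unwritten) argument: apply Corollary \ref{corollary_stationarity_from_indescribability} with $A=P_{\kappa_x}x$ to get $\xi+1$-s-strong stationarity, then invoke the characterization of non-isolated points of $(P_\kappa X,\tau_{\xi+1})$. Your care with the bookkeeping ($\kappa_x=x\cap\kappa$, $\xi<\kappa_x$, hence $\xi+1\leq\kappa_x$ and $\xi+1<\kappa$) is exactly what is needed and matches the paper's standing conventions.
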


\begin{proposition}\label{proposition_indescribable_reflection}
Suppose $P_\kappa X$ is $\Pi^1_1$-indescribable. Then a set $A\subseteq P_\kappa X$ is $2$-s-strongly stationary in $P_\kappa X$ if and only if for every pair $S,T$ of strongly stationary subsets of $P_\kappa X$ there is an $x\in A$ such that $x\cap\kappa=\kappa_x$ is a Mahlo cardinal and the sets $S$ and $T$ are both strongly stationary in $P_{\kappa_x}x$.
\end{proposition}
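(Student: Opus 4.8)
The plan is to reduce both implications, via Proposition~\ref{proposition_strong_vs_1_s_stationarity}, to the $\xi=2$ instance of the criterion behind Theorem~\ref{theorem_induction}, after first extracting three facts from the hypothesis. First, $\kappa$ is Mahlo, hence strongly inaccessible: by Proposition~\ref{proposition_strongly_normal_ind} the ideal $\Pi^1_1(\kappa,X)$ is strongly normal, so by the Carr--Levinski--Pelletier theorem $\kappa$ is Mahlo or a successor $\mu^+$ with $\mu^{<\mu}=\mu$, the latter being incompatible with $\Pi^1_1$-indescribability of $P_\kappa X$; consequently $\kappa$ is a limit cardinal, and reading the proof of Corollary~\ref{corollary_stationarity_from_indescribability} with the whole space $P_\kappa X$ in place of $P_{\kappa_x}x$ shows that $P_\kappa X$ is $2$-s-strongly stationary. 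Second, ``$\kappa$ is Mahlo'' is captured by a $\Pi^1_1$ sentence over $(V_\kappa(\kappa,X),\in)$ (e.g.\ ``there is no club $C\subseteq\kappa$ carrying a sequence witnessing that every member of $C$ is singular'') that is satisfied by $(V_{\kappa_x}(\kappa_x,x),\in)$, for $x$ with $x\cap\kappa=\kappa_x$, exactly when $\kappa_x$ is genuinely Mahlo; since $\kappa$ is Mahlo, reflection yields $W:=\{x\in P_\kappa X\st x\cap\kappa=\kappa_x\text{ is Mahlo}\}\in\Pi^1_1(\kappa,X)^*$, and since $\Pi^1_1(\kappa,X)$ is a strongly normal ideal containing the minimal one $\NSS_{\kappa,X}$, we get $W\in\NSS_{\kappa,X}^*$, so $W$ contains a weak club $C$ (the Fact of Section~\ref{section_strong_stationarity}). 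Third, by (the proof of) Proposition~\ref{proposition_strong_vs_1_s_stationarity}, applied to $P_\kappa X$ and to each $P_{\kappa_z}z$ with $\kappa_z$ strongly inaccessible, ``$A$ is $1$-s-strongly stationary there'' coincides with ``the relevant base cardinal is Mahlo and $A$ is strongly stationary there''; moreover, a routine interleaving argument using Proposition~\ref{proposition_trace_of_a_cofinal_set} shows weak clubs, in particular $C$, are $1$-s-weak clubs, and $1$-s-weak clubs are closed under finite intersections by Corollary~\ref{corollary_xi_plus_1_s_filter_base} (using that $\NS^2_{\kappa,X}$ is an ideal).

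For the direction $(\Leftarrow)$, assume the stated reflection property. Since $\kappa$ is a limit cardinal, by Theorem~\ref{theorem_induction}(2) it suffices to show $A\cap d_\zeta(S)\cap d_\zeta(T)\neq\emptyset$ whenever $\zeta\leq 1$ and $S,T$ are $\zeta$-s-strongly stationary in $P_\kappa X$. If $\zeta=1$, then $S$ and $T$ are already strongly stationary in $P_\kappa X$ by fact three; if $\zeta=0$, replace them by $d_0(S)$ and $d_0(T)$, which are weak clubs by Proposition~\ref{proposition_trace_of_a_cofinal_set} and hence strongly stationary. Applying the hypothesis to this pair of strongly stationary sets produces $x\in A$ with $\kappa_x=x\cap\kappa$ Mahlo and both members of the pair strongly stationary in $P_{\kappa_x}x$; since $\kappa_x$ is then strongly inaccessible, fact three makes them $1$-s-strongly stationary, hence $\zeta$-s-strongly stationary, in $P_{\kappa_x}x$, so by Theorem~\ref{theorem_induction}(1) we get $x\in d_\zeta(S)\cap d_\zeta(T)$ (using $d_\zeta(d_\zeta(Y))\subseteq d_\zeta(Y)$ when $\zeta=0$). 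Thus $x\in A\cap d_\zeta(S)\cap d_\zeta(T)$, and $A$ is $2$-s-strongly stationary.

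For the direction $(\Rightarrow)$, assume $A$ is $2$-s-strongly stationary and fix strongly stationary $S,T\subseteq P_\kappa X$. By fact three, $S$ and $T$ are $1$-s-strongly stationary in $P_\kappa X$, so $d_1(S)$ and $d_1(T)$ are $1$-s-weak clubs by Corollary~\ref{corollary_weak_club}; intersecting with the weak club $C\subseteq W$ and using fact three, $d_1(S)\cap C$ and $d_1(T)\cap C$ are $1$-s-weak clubs, in particular $1$-s-strongly stationary in $P_\kappa X$. Now apply the $\zeta=1$ clause of the definition of $2$-s-strong stationarity of $A$ to the pair $(d_1(S)\cap C,\ d_1(T)\cap C)$: there is $y\in A$ such that both sets are $1$-s-strongly stationary in $P_{\kappa_y}y$. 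As a $1$-s-weak club is $1$-s-closed, this forces $y\in(d_1(S)\cap C)\cap(d_1(T)\cap C)$; hence $y\in C\subseteq W$, so $\kappa_y=y\cap\kappa$ is Mahlo, and $y\in d_1(S)\cap d_1(T)$, so by Theorem~\ref{theorem_induction}(1) the sets $S$ and $T$ are $1$-s-strongly stationary in $P_{\kappa_y}y$. Since $\kappa_y$ is strongly inaccessible, fact three gives that $S$ and $T$ are strongly stationary in $P_{\kappa_y}y$, and $y$ witnesses the required reflection.

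The main obstacle is fact two: one must verify that Mahloness is captured by a $\Pi^1_1$ sentence that reflects \emph{correctly} through the two-cardinal $V_{\kappa_x}(\kappa_x,x)$-hierarchy, so that $\{x\st\kappa_x\text{ is Mahlo}\}$ lands in the $\Pi^1_1$-indescribability filter and hence contains a weak club. That weak club is precisely what allows the forward direction to locate a reflection point $y$ at which $\kappa_y$ is strongly inaccessible, which is in turn what is needed to convert $1$-s-strong stationarity back into strong stationarity via Proposition~\ref{proposition_strong_vs_1_s_stationarity}. Everything else — that $P_\kappa X$ and the relevant $P_{\kappa_z}z$ are $2$-s-strongly stationary, that weak clubs are $1$-s-weak clubs, and that $1$-s-weak clubs form a filter base — is bookkeeping with the machinery already developed in Section~\ref{section_derived_topologies}.
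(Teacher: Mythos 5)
Your backward direction is correct and essentially the paper's argument, but the forward direction rests on a false step. In your ``fact two'' you infer from $W=\{x\in P_\kappa X\st x\cap\kappa=\kappa_x\text{ is Mahlo}\}\in\Pi^1_1(\kappa,X)^*$ and $\NSS_{\kappa,X}\subseteq\Pi^1_1(\kappa,X)$ that $W\in\NSS_{\kappa,X}^*$. The containment of ideals goes the wrong way for that: $\NSS_{\kappa,X}\subseteq\Pi^1_1(\kappa,X)$ yields $\NSS_{\kappa,X}^*\subseteq\Pi^1_1(\kappa,X)^*$ and $\Pi^1_1(\kappa,X)^+\subseteq\NSS_{\kappa,X}^+$, so membership in the (larger) $\Pi^1_1$ filter only gives that $W$ is $\NSS_{\kappa,X}$-\emph{positive}, i.e.\ strongly stationary --- not that it is in the dual filter. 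And the conclusion is not merely unjustified but false: running the chain construction from the Fact in Section \ref{section_strong_stationarity} against any $f$ produces $x_\eta\in C_f\subseteq B_f$ with $\kappa_{x_\eta}=\eta$ for every regular $\eta$ in a club of $\kappa$, and since the inaccessible non-Mahlo cardinals are stationary in any Mahlo $\kappa$, every $B_f$ meets the complement of $W$. So $W$ contains no weak club, and the set $C\subseteq W$ on which your entire forward argument hinges (to force the reflection point $y$ into $W$ via $1$-s-closedness of $d_1(S)\cap C$ and $d_1(T)\cap C$) does not exist. This is the analogue of the fact that the Mahlo cardinals below a Mahlo cardinal need not contain a club; Mahloness is a genuinely $\Pi^1_1$ property and cannot be captured by a closure condition.

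The repair is not cosmetic: you cannot pre-restrict to a closed set of points with $\kappa_x$ Mahlo. The paper instead folds everything into a single $\Pi^1_1$ sentence $\varphi$ asserting ``$\kappa$ is Mahlo and $S$ and $T$ are strongly stationary,'' lets $C$ be the set of $x$ (with $x\cap\kappa=\kappa_x$) at which $\varphi$ reflects, notes $C\in\Pi^1_1(\kappa,X)^*$ and hence (by the correct direction of the containment) that $C$ is strongly stationary and so, by Proposition \ref{proposition_strong_vs_1_s_stationarity}, $1$-s-strongly stationary, and then applies the $2$-s-strong stationarity of $A$ to the pair $(C,C)$ to land a point of $A$ in $C$; every point of $C$ already satisfies the full conclusion, so no separate ``Mahloness club'' is needed. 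Your backward direction, and your reductions via Proposition \ref{proposition_strong_vs_1_s_stationarity} and Theorem \ref{theorem_induction}, are fine as written.
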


\begin{proof}
Suppose $A$ is $2$-s-strongly stationary in $P_\kappa X$. Fix sets $S$ and $T$ that are strongly stationary in $P_\kappa X$. The fact that $\kappa$ is Mahlo and the sets $S$ and $T$ are strongly stationary in $P_\kappa X$ can be expressed by a $\Pi^1_1$ sentence:
\[(V_\kappa(\kappa,X),\in,P_\kappa X,S,T)\models\varphi.\]
The set
\[C=\{x\in P_\kappa X\st (V_{\kappa_x}(\kappa_x,x),\in,P_{\kappa_x}x,S\cap V_{\kappa_x}(\kappa_x,x),T\cap V_{\kappa_x}(\kappa_x,x))\models\varphi\}\]
is in the filter $\Pi^1_1(\kappa,X)^*$. Thus $C$ is, in particular, strongly stationary in $P_\kappa X$ and so by Lemma \ref{proposition_strong_vs_1_s_stationarity} we see that $C$ is $1$-s-strongly stationary in $P_\kappa X$. Since $A$ is $2$-s-strongly stationary in $P_\kappa X$, there is an $x\in A\cap C$ and it follows that $\kappa_x$ is Mahlo and the sets $S$ and $T$ are strongly stationary in $P_{\kappa_x}x$.

Conversely, to show that $A$ is $2$-s-strongly stationary in $P_\kappa X$, fix sets $Q$ and $R$ that are $1$-s-strongly stationary in $P_\kappa X$. By Lemma \ref{proposition_strong_vs_1_s_stationarity}, $Q$ and $R$ are strongly stationary in $P_\kappa X$. Thus, by assumption, there is an $x\in A$ such that $x\cap\kappa=\kappa_x$ is Mahlo and the sets $Q$ and $R$ are both strongly stationary in $P_{\kappa_x}x$. By Lemma \ref{proposition_strong_vs_1_s_stationarity}, $Q$ and $R$ are both $1$-s-strongly stationary in $P_{\kappa_x}x$. Hence $A$ is $2$-s-strongly stationary in $P_\kappa X$.
\end{proof}

\begin{proposition}\label{proposition_strongly_normal}
For $x\in P_\kappa X$ with $x\cap\kappa=\kappa_x$,
if $P_{\kappa_x}x$ is $\Pi^1_\xi$-indescribable where $\xi<\kappa_x$, then the ideal $\NS_{\kappa_x,x}^{\xi+1}$ (see Definition \ref{definition_xi_s_ideal}) is strongly normal.
\end{proposition}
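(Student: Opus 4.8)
\emph{Proof sketch.} The plan is to verify strong normality directly from the definition. Fix a set $S$ that is $\xi+1$-s-strongly stationary in $P_{\kappa_x}x$ (which, since this only depends on $S\cap P_{\kappa_x}x$, we may assume is a subset of $P_{\kappa_x}x$) and a function $f\colon S\to P_\kappa X$ with $f(y)\strsub y$ for every $y\in S$; note that then $f(y)\in P_{\kappa_x}x$ for all $y$. We must produce a $\xi+1$-s-strongly stationary $T\subseteq S$ on which $f$ is constant. Suppose toward a contradiction that no fiber $S_a:=\{y\in S\st f(y)=a\}$, for $a\in P_{\kappa_x}x$, is $\xi+1$-s-strongly stationary in $P_{\kappa_x}x$. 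By Corollary \ref{corollary_stationarity_from_indescribability}, $P_{\kappa_x}x$ itself is $\xi+1$-s-strongly stationary, so Corollary \ref{corollary_xi_plus_1_s_filter_base} applies and, for each $a\in P_{\kappa_x}x$, furnishes a $\xi$-s-weak club $E_a\subseteq P_{\kappa_x}x$ with $S_a\cap E_a=\emptyset$.

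The key point is that every $\xi$-s-weak club in $P_{\kappa_x}x$ belongs to $\Pi^1_\xi(\kappa_x,x)^*$: by Corollary \ref{corollary_xi_plus_1_s_filter_base} such a set lies in $(\NS^{\xi+1}_{\kappa_x,x})^*$, and by Corollary \ref{corollary_stationarity_from_indescribability} we have $\NS^{\xi+1}_{\kappa_x,x}\subseteq\Pi^1_\xi(\kappa_x,x)$ (working with subsets of $P_{\kappa_x}x$), whence $(\NS^{\xi+1}_{\kappa_x,x})^*\subseteq\Pi^1_\xi(\kappa_x,x)^*$. Thus $E_a\in\Pi^1_\xi(\kappa_x,x)^*$ for every $a$. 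Since $P_{\kappa_x}x$ is $\Pi^1_\xi$-indescribable, the ideal $\Pi^1_\xi(\kappa_x,x)$ is proper and strongly normal by Proposition \ref{proposition_strongly_normal_ind}, so its dual filter is closed under $\strsub$-diagonal intersections. Hence
\[P:=\bigtriangleup_\strsub\{E_a\st a\in P_{\kappa_x}x\}=\Bigl\{y\in P_{\kappa_x}x\st y\in\bigcap_{a\strsub y}E_a\Bigr\}\]
lies in $\Pi^1_\xi(\kappa_x,x)^*$; in particular $P$ is $\Pi^1_\xi$-indescribable in $P_{\kappa_x}x$, hence $\xi+1$-s-strongly stationary in $P_{\kappa_x}x$ by Corollary \ref{corollary_stationarity_from_indescribability}, and a fortiori $\xi$-s-strongly stationary in $P_{\kappa_x}x$.

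Now I claim $S\cap d_\xi(P)=\emptyset$, which by Theorem \ref{theorem_induction}(2) (applied with $\zeta=\xi$ and both reflecting sets equal to $P$) contradicts the $\xi+1$-s-strong stationarity of $S$ and completes the proof. So suppose $y\in S\cap d_\xi(P)$ and set $a_0:=f(y)\strsub y$. By Theorem \ref{theorem_induction}(1), $P$ is $\xi$-s-strongly stationary in $P_{\kappa_y}y$. The heart of the matter is to check that $P\cap\{z\st a_0\strsub z\}$ is again $\xi$-s-strongly stationary in $P_{\kappa_y}y$: unwinding Definition \ref{definition_xi_s_stationary}, given $\eta<\xi$ and $\eta$-s-strongly stationary sets $Q_0,Q_1\subseteq P_{\kappa_y}y$, one first replaces each $Q_i$ by $Q_i\cap\{z\st a_0\strsub z\}$, which is still $\eta$-s-strongly stationary in $P_{\kappa_y}y$ because $\{z\st a_0\strsub z\}=d_0(\{z\st a_0\strsub z\})$ is $\strsub$-cofinal in $P_{\kappa_y}y$ (here $a_0\strsub y$) and one may invoke Theorem \ref{theorem_induction}(3); then any witness $z\in P$ delivered by the $\xi$-s-strong stationarity of $P$ automatically satisfies $a_0\strsub z$, since $z$ lies above an element of $Q_0\cap\{z\st a_0\strsub z\}$. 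As $P\cap\{z\st a_0\strsub z\}\subseteq E_{a_0}$ by the definition of the $\strsub$-diagonal intersection, it follows that $E_{a_0}$ is $\xi$-s-strongly stationary in $P_{\kappa_y}y$; since $E_{a_0}$ is a $\xi$-s-weak club in $P_{\kappa_x}x$, its $\xi$-s-closedness yields $y\in E_{a_0}$. But $y\in S_{a_0}$ and $S_{a_0}\cap E_{a_0}=\emptyset$, a contradiction. Hence some fiber $S_a$ is $\xi+1$-s-strongly stationary, and $T:=S_a$ is as required.

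The step I expect to require the most care is the verification that $P\cap\{z\st a_0\strsub z\}$ remains $\xi$-s-strongly stationary in $P_{\kappa_y}y$, i.e.\ that the extra constraint ``$a_0\strsub z$'' can be absorbed by intersecting the reflecting sets with the $\strsub$-tail above $a_0$. Everything else is routine bookkeeping: the only places the $\Pi^1_\xi$-indescribability hypothesis enters are Proposition \ref{proposition_strongly_normal_ind} (strong normality of $\Pi^1_\xi(\kappa_x,x)$) and Corollary \ref{corollary_stationarity_from_indescribability}, while the manipulation of $\xi$-s-weak clubs and of $\xi$-s-strong stationarity rests only on Theorem \ref{theorem_induction} and the corollaries following it.
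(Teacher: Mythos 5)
Your proof is correct and is essentially the paper's argument in dual form: where the paper shows the filter $(\NS^{\xi+1}_{\kappa_x,x})^*$ is closed under $\strsub$-diagonal intersections of $\xi$-s-weak clubs, you verify the regressive-function formulation directly, but both rest on exactly the same ingredients — strong normality of $\Pi^1_\xi(\kappa_x,x)$ to form the diagonal intersection, Corollary \ref{corollary_stationarity_from_indescribability} to see it is $\xi+1$-s-strongly stationary, and the ``tail absorption'' step showing that a point of $d_\xi$ of the diagonal intersection lies in (the derived set of) each individual weak club indexed below it. Your spelled-out verification of that last step is in fact slightly more detailed than the paper's, which simply asserts that $(z,y)\cap\bigtriangleup_\strsub\{C_z\st z\in P_{\kappa_x}x\}$ is $\xi$-s-strongly stationary in $P_{\kappa_y}y$.
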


\begin{proof}
Suppose $C_z\in (\NS^{\xi+1}_{\kappa_x,x})^*$ for $z\in P_{\kappa_x}X$. Without loss of generality, by Corollary \ref{corollary_xi_plus_1_s_filter_base}, we may assume that each $C_z$ is $\xi$-s-weak club in $P_{\kappa_x}x$.

Since each $C_z$ is in the filter $\Pi^1_\xi(\kappa_x,x)^*$ and $\Pi^1_\xi(\kappa_x,x)$ is strongly normal, it follows that the set $C=\bigtriangleup_\strsub\{C_z\st z\in P_{\kappa_x}x\}$ is in the filter $\Pi^1_\xi(\kappa_x,x)^*$ and thus $C$ is $\xi+1$-s-strongly stationary in $P_{\kappa_x}x$ by Corollary \ref{corollary_stationarity_from_indescribability}. By Theorem \ref{theorem_induction}(2), it follows that $d_\xi(C)$ is $\xi$-s-strongly stationary in $P_{\kappa_x}x$, and since $d_\xi$ is the Cantor derivative of the space $(P_\kappa X,\tau_\xi)$, it follows that $d_\xi(d_\xi(C))\subseteq d_\xi(C)$ and hence $d_\xi(C)$ is $\xi$-s-weak club in $P_\kappa X$. Thus it will suffice to show that $d_\xi(C)\subseteq C$.

Let us verify that $d_\xi(C)\subseteq \bigtriangleup_\strsub\{d_\xi(C_z)\st z\in P_{\kappa_x}x\}$. Suppose $y\in d_\xi(C)$, then the set $\bigtriangleup_\strsub\{C_z\st z\in P_{\kappa_x}x\}$ is $\xi$-s-strongly stationary in $P_{\kappa_y}y$. To show that $y\in \bigtriangleup_\strsub\{d_\xi(C_z)\st z\in P_{\kappa_x}x\}$ we must verify that $y\in\bigcap_{z\strsub y}d_\xi(C_z)$. Fix $z\strsub y$, then $(z,y)\cap \bigtriangleup_\strsub\{C_z\st z\in P_{\kappa_x}x\}\subseteq C_z$ and since $(z,y)\cap \bigtriangleup_\strsub\{C_z\st z\in P_{\kappa_x}x\}$ is $\xi$-s-strongly stationary in $P_{\kappa_y}y$ we see that $y\in d_\xi(C_z)$. Thus $d_\xi(C)\subseteq \bigtriangleup_\strsub\{d_\xi(C_z)\st z\in P_{\kappa_x}x\}$.

Since each $C_z$ is $\xi$-s-weak club in $P_{\kappa_x}x$, it follows that $d_\xi(C_z)\subseteq C_z$ and thus
\[d_\xi(C)\subseteq\bigtriangleup_\strsub\{d_\xi(C_z)\st z\in P_{\kappa_x}x\}\subseteq \bigtriangleup_\strsub\{C_z\st z\in P_{\kappa_x}x\}=C.\]
\end{proof}

\subsection{Variations}\label{section_variation}\label{section_variations}

In this subsection, we investigate a couple of variations on the 
sequence of derived topologies considered above. First, we show that 
by restricting our attention to a certain natural club subset of 
$P_\kappa X$, certain questions about the resulting spaces become 
more tractable.

Let $P'_\kappa X$ be the set of $x \in P_\kappa X$ for which 
$\kappa_x = x \cap \kappa$. Similarly, if $x \in P'_\kappa X$, 
then $P'_{\kappa_x} x = P'_\kappa X \cap P_{\kappa_x} x$. 
If $\kappa$ is weakly inaccessible, then 
$P'_\kappa X$ is evidently a club, and hence a weak club, in $P_\kappa X$.
It follows that, if $\xi < \kappa$, $x \in P_\kappa X$, and $\kappa_x$ is weakly 
inaccessible, then 
\begin{align} \label{variation_eq}
  (P_{\kappa_x} x \text{ is $\xi$-s-stationary}) \Longleftrightarrow 
  (P'_{\kappa_x} x \text{ is $\xi$-s-stationary in } P_{\kappa_x} x).
\end{align}

For each $\xi < \kappa$, let $\tau'_\xi$ be the subspace topology 
on $P'_\kappa X$ induced by $\tau_\xi$, and let $\B'_\xi = 
\{U \cap P'_\kappa X \mid U \in \B_\xi\}$; it follows that $\tau'_\xi$ 
is the topology on $P'_\kappa X$ generated by $\B'_\xi$. 

\begin{proposition} \label{isolated_prop}
    Suppose that $x \in P'_\kappa X$. Then the following are equivalent:
    \begin{enumerate}
        \item $\kappa_x$ is weakly inaccessible;
        \item $x$ is not isolated in $(P'_\kappa X, \tau'_0)$.
    \end{enumerate}
\end{proposition}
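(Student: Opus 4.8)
The plan is to analyze what it means for $x \in P'_\kappa X$ to be isolated in the topology $\tau'_0$, which is the subspace topology on $P'_\kappa X$ induced by the order topology $\tau_0$ on $P_\kappa X$. Since $\B_0 = \{0\} \cup \{(a,b] \mid a \strsub b\}$ is a base for $\tau_0$, the collection of sets of the form $(a,b] \cap P'_\kappa X$ with $a \strsub b$ (together with $\{0\}$) forms a base for $\tau'_0$. Thus $x$ is isolated in $(P'_\kappa X, \tau'_0)$ if and only if there exist $a \strsub x$ (we may take $b = x$ in the basic neighborhood $(a,x]$) such that $(a,x] \cap P'_\kappa X = \{x\}$, i.e., there is no $z \in P'_\kappa X$ with $a \strsub z \strsub x$. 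Equivalently, $x$ is \emph{not} isolated iff for every $a \strsub x$ there is some $z \in P'_\kappa X$ with $a \strsub z \strsub x$; that is, $P'_\kappa X$ (equivalently, by \eqref{variation_eq} and the fact that $P'_\kappa X$ restricted to $P_{\kappa_x} x$ is $P'_{\kappa_x} x$) is $\strsub$-cofinal in $P_{\kappa_x} x$. This mirrors Corollary \ref{corollary_successors_are_isolated} but with the added subtlety that the witnesses $z$ must themselves lie in $P'_\kappa X$.

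For the direction (1) $\Rightarrow$ (2): assume $\kappa_x$ is weakly inaccessible. Since $x \in P'_\kappa X$ we have $x \cap \kappa = \kappa_x$, a limit cardinal, so $x$ is not isolated in $\tau_0$ by Corollary \ref{corollary_successors_are_isolated}; but I need a witness lying in $P'_\kappa X$. Given $a \strsub x$, I would build an increasing chain $\langle z_\eta \mid \eta < \kappa_x \rangle$ below $x$, exactly as in the proofs of Fact and Proposition \ref{proposition_trace_of_a_cofinal_set}: start with $z_0 = a$, at successors enlarge so that $z_{\eta+1} \cap \kappa$ is an ordinal and $z_\eta \strsub z_{\eta+1} \strsub x$ (possible since $x \cap \kappa = \kappa_x$ is a limit cardinal, so we can keep grabbing more ordinals of $x$), and take unions at limits. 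The set of $\eta < \kappa_x$ with $\kappa_{z_\eta} = \eta$ is club in $\kappa_x$; since $\kappa_x$ is weakly inaccessible (hence weakly Mahlo is \emph{not} needed here — only regularity of $\kappa_x$ plus the club being nonempty, but in fact any closure point works), pick any $\eta$ in this club that is additionally a cardinal (limit cardinals below $\kappa_x$ are cofinal since $\kappa_x$ is a limit cardinal). Then $z_\eta \in P'_\kappa X$ with $a \strsub z_\eta \strsub x$, so $x$ is not isolated.

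For the contrapositive of (2) $\Rightarrow$ (1): suppose $\kappa_x$ is not weakly inaccessible. Since $x \in P'_\kappa X$, $\kappa_x = x \cap \kappa$ is an ordinal, and it is either a successor cardinal or a singular (limit) cardinal. If $\kappa_x$ is a successor cardinal, then $x$ is already isolated in $(P_\kappa X, \tau_0)$ by Corollary \ref{corollary_successors_are_isolated}, hence isolated in the subspace. If $\kappa_x$ is singular, fix $a \subseteq \kappa_x$ cofinal in $\kappa_x$ with $|a| = \cf(\kappa_x) < \kappa_x$; since $x \cap \kappa = \kappa_x$ we have $a \in P_{\kappa_x} x$, so $a \strsub x$. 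Now if $z \in P'_\kappa X$ satisfies $a \strsub z$, then $a \subseteq z$ and $z \cap \kappa = \kappa_z$ is an ordinal, so $\kappa_z \supseteq a$ is an ordinal containing a set cofinal in $\kappa_x$, forcing $\kappa_z \geq \kappa_x$ and hence $z \not\strsub x$. Therefore $(a,x] \cap P'_\kappa X = \{x\}$ and $x$ is isolated in $(P'_\kappa X, \tau'_0)$. This argument is essentially the one already used in the last paragraph of the proof of Proposition \ref{proposition_reflecting_cofinality}.

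The main obstacle is the (1) $\Rightarrow$ (2) direction: one must produce a closure point of the chain that simultaneously lies in $P'_\kappa X$ (so $\kappa_{z_\eta} = z_\eta \cap \kappa$) and is a cardinal, but this only requires $\kappa_x$ to be a weakly inaccessible — so that both "$\kappa_{z_\eta}=\eta$" points and limit-cardinal points are cofinal in $\kappa_x$ — rather than weakly Mahlo; a remark to this effect may be worth including, and care is needed to ensure that the chain construction stays inside $P_{\kappa_x} x$, which uses that $x \cap \kappa = \kappa_x$ is a \emph{limit} cardinal so there is always "more room" in $x$ above any $z_\eta$.
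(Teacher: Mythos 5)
Your proof is correct. The direction (2)$\Rightarrow$(1) is essentially identical to the paper's: the successor case via Corollary \ref{corollary_successors_are_isolated}, and the singular case via a cofinal $a \subseteq \kappa_x$ of size $\cf(\kappa_x)$, observing that any $z \in P'_\kappa X$ with $a \strsub z$ has $z \cap \kappa$ an ordinal containing $a$, hence $|z| \geq \kappa_x$. Where you diverge is in (1)$\Rightarrow$(2): you run a transfinite chain construction of length $\kappa_x$ and extract a closure point that is additionally a cardinal, whereas the paper gets the witness in one step --- given $y \prec x$, take a cardinal $\lambda < \kappa_x$ above $|y|$ (and above $\sup(y \cap \kappa)$) and observe that $y \cup \lambda \in (y,x] \cap P'_\kappa X$, using only that $\kappa_x = x \cap \kappa$ is a limit cardinal. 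Both arguments hinge on the same idea (arrange that the witness's intersection with $\kappa$ is a cardinal), but the direct construction avoids the recursion, the club of closure points, and the appeal to regularity entirely; your version is correct but carries unnecessary machinery. One small inaccuracy in a parenthetical: you write that ``in fact any closure point works,'' but a closure point $\eta$ only yields $z_\eta \cap \kappa = \eta$, and membership in $P'_\kappa X$ additionally requires $\eta$ to be a cardinal --- which is exactly the condition you then correctly impose, so the proof itself is unaffected.
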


\begin{proof}
    If $\kappa_x$ is weakly inaccessible and $y \prec x$, with 
    $y \in P_\kappa X$, then, letting $\lambda$ be the least cardinal 
    with $|y| < \lambda$, we have $y \cup \lambda \in (y,x] \cap P'_\kappa 
    X$. The implication (1)$\implies$(2) follows immediately.

    For the converse, suppose first that $\kappa_x = \lambda^+$ is a 
    successor cardinal, and let $y \prec x$ be such that $|y| = \lambda$. 
    Then $(y,x] = \{x\}$, so $x$ is isolated in $\tau_0$, and hence also 
    in $\tau'_0$. Suppose next that $\kappa_x$ is singular, and let 
    $y \subseteq \kappa_x$ be a cofinal subset such that $|y| = 
    \cf(\kappa_x)$. Then $(y,x] \cap P'_\kappa X = \{x\}$, so $x$ 
    is isolated in $\tau'_0$.
\end{proof}

Using this proposition, we can establish the following characterization of 
when $\B'_\xi$ forms a base for $\tau'_\xi$. Since the proof is essentially 
the same as that of Theorem \ref{theorem_base_characterization}, we leave 
it to the reader.

\begin{theorem} \label{theorem_base_characterization_ii}
    Suppose that $0 < \xi < \kappa$. Then the following are equivalent:
    \begin{enumerate}
        \item $\B'_\xi$ is a base for $\tau'_\xi$;
        \item for every $\zeta \leq \xi$, every $x \in P'_\kappa X$ for which 
        $\kappa_x$ is weakly inaccessible, and every $A \subseteq P_\kappa X$, 
        if $A$ is $\zeta$-strongly stationary in $P_{\kappa_x} x$, then $A$ 
        is $\zeta$-s-strongly stationary in $P_{\kappa_x} x$. \qed
    \end{enumerate}
\end{theorem}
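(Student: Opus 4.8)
The plan is to run the proof of Theorem~\ref{theorem_base_characterization} essentially verbatim, relativized to the subspace $(P'_\kappa X, \tau'_\xi)$; three preliminary remarks organize the relativization. First, since the trace of a base is a base for the subspace topology, the sets of the form $\big(I \cap d_\zeta(A_0) \cap \cdots \cap d_\zeta(A_{n-1})\big) \cap P'_\kappa X$, with $I \in \B_0$, $n < \omega$, $\zeta < \xi$, and $A_i \subseteq P_\kappa X$ (the traces of the basic $\tau_\xi$-open sets of Lemma~\ref{lemma_base}), form a base for $\tau'_\xi$, and these are what one must approximate by members of $\B'_\xi$. Second, by Proposition~\ref{isolated_prop} any $x \in P'_\kappa X$ with $\kappa_x$ not weakly inaccessible is already isolated in $(P'_\kappa X, \tau'_0)$, and the proof of that proposition in fact exhibits a member of $\B'_0$ equal to $\{x\}$; hence isolated points never obstruct ``$\B'_\xi$ is a base'', and condition~(2) is vacuous at them, so throughout I would restrict attention to points $x \in P'_\kappa X$ with $\kappa_x$ weakly inaccessible. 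Third, for such $x$ and any $\zeta < \kappa$, the equivalence~\eqref{variation_eq} together with the fact that $P'_\kappa X$ is a weak club lets one move freely between the $\zeta$-s-strong stationarity of $P_{\kappa_y} y$ and that of its trace $P'_{\kappa_y} y$ whenever $\kappa_y$ is weakly inaccessible.

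For the forward direction I would prove the contrapositive. Assuming (2) fails, fix a counterexample $(\zeta, x, A)$ with $x \in P'_\kappa X$, $\kappa_x$ weakly inaccessible, $A$ being $\zeta$-strongly stationary but not $\zeta$-s-strongly stationary in $P_{\kappa_x} x$, and $\zeta$ minimal; necessarily $\zeta > 0$. Just as in Theorem~\ref{theorem_base_characterization}, one first shows that $P_{\kappa_x} x$ itself is not $\zeta$-s-strongly stationary --- otherwise Corollary~\ref{corollary_filter_base} together with the minimality of $\zeta$ would force $A$ to be $\zeta$-s-strongly stationary --- and then picks $\eta < \zeta$ and $\eta$-s-strongly stationary $S, T \subseteq P_{\kappa_x} x$ admitting no common reflection point, so that $d_\eta(S) \cap d_\eta(T) = \{x\}$ by Theorem~\ref{theorem_induction}(1) and hence $\{x\} = \big(d_\eta(S) \cap d_\eta(T)\big) \cap P'_\kappa X \in \tau'_\xi$. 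It then remains to check that $\{x\} \notin \B'_\xi$. Since $\kappa_x$ is a limit cardinal, no member of $\B'_0$ equals $\{x\}$ (by Proposition~\ref{isolated_prop}, applied to the interval pieces); and if $\zeta_0 < \xi$ and $B \subseteq P_\kappa X$ satisfy $x \in d_{\zeta_0}(B)$, then, since $P_{\kappa_x} x$ is not $\zeta$-s-strongly stationary, one gets $\zeta_0 < \zeta$ and $B$ is $\zeta_0$-s-strongly stationary in $P_{\kappa_x} x$, whereupon the minimality of $\zeta$, the $\zeta$-strong stationarity of $P_{\kappa_x} x$ (which holds because $P_{\kappa_x} x \supseteq A$), and~\eqref{variation_eq} jointly yield some $y \in P'_\kappa X$ with $y \neq x$, $\kappa_y$ weakly inaccessible, and $B$ $\zeta_0$-s-strongly stationary in $P_{\kappa_y} y$; thus $d_{\zeta_0}(B) \cap P'_\kappa X \neq \{x\}$. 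Hence $\{x\}$ is $\tau'_\xi$-open but is not a member of $\B'_\xi$, so $\B'_\xi$ is not a base.

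For the backward direction I would assume (2), fix $x \in P'_\kappa X$ with $\kappa_x$ weakly inaccessible and a basic $\tau'_\xi$-neighbourhood $U \cap P'_\kappa X$ of $x$ with $U = I \cap d_\zeta(A_0) \cap \cdots \cap d_\zeta(A_{n-1})$, $I \in \B_0$, $\zeta < \xi$. Then $P_{\kappa_x} x$ is $\zeta$-s-strongly stationary; if it is not $(\zeta+1)$-strongly stationary, there is $B \subseteq P_{\kappa_x} x$ with $d_\zeta(B) = \{x\}$, and $\{x\} \in \B'_\xi$ already does the job, so I may assume $P_{\kappa_x} x$ is $(\zeta+1)$-strongly stationary, hence --- by (2), which applies because $\kappa_x$ is weakly inaccessible --- $(\zeta+1)$-s-strongly stationary. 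Then, exactly as in Theorem~\ref{theorem_base_characterization}, Corollaries~\ref{corollary_weak_club} and~\ref{corollary_xi_plus_1_s_filter_base} show that $U$ is $\zeta$-s-weak club in $P_{\kappa_x} x$; in particular $x \in d_\zeta(U) \subseteq U$, and $d_\zeta(U) \cap P'_\kappa X \in \B'_\xi$ lies between $x$ and $U \cap P'_\kappa X$. Therefore $\B'_\xi$ is a base for $\tau'_\xi$.

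The step I expect to be the main obstacle is the one flagged in the forward direction: the reflection witness $y$ is delivered by the $\zeta$-strong stationarity of $P_{\kappa_x} x$ only as some element of $P_{\kappa_x} x$, whereas the present hypothesis (2) --- hence the minimality of $\zeta$, which amounts to the equivalence of $\zeta_0$-strong and $\zeta_0$-s-strong stationarity for $\zeta_0 < \zeta$ --- is available only at points of $P'_\kappa X$ with weakly inaccessible index. Upgrading an arbitrary witness to one of this restricted form is where~\eqref{variation_eq} and the weak-club-ness of $P'_\kappa X$ are essential: one passes to the traces on $P'_{\kappa_x} x$ of the sets in play, carries out the reflection inside $P'_{\kappa_x} x$, and translates back. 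Modulo this extra bookkeeping, the argument is identical to that of Theorem~\ref{theorem_base_characterization}.
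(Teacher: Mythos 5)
Your overall strategy is exactly the paper's: the paper explicitly declares the proof to be ``essentially the same as that of Theorem~\ref{theorem_base_characterization}'' and leaves it to the reader, and your relativization via Proposition~\ref{isolated_prop}, the trace-of-a-base observation, and \eqref{variation_eq} is the intended one. The backward direction as you describe it goes through without incident, since there condition~(2) is applied only at the point $x$ itself, which you have already arranged to lie in $P'_\kappa X$ with $\kappa_x$ weakly inaccessible.

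The soft spot is precisely the one you flag in the forward direction, and your one-sentence resolution does not yet close it. To show $d_{\zeta_0}(B)\cap P'_\kappa X\neq\{x\}$ you need a point $y\neq x$ of $P'_\kappa X$ at which $B$ is $\zeta_0$-s-strongly stationary. The natural way to force the witness into $P'_\kappa X$ is to reflect the \emph{pair} $(B,\,P'_{\kappa_x}x)$ — both $\zeta_0$-s-strongly stationary in $P_{\kappa_x}x$ by \eqref{variation_eq} — and then use the $\prec$-closedness of $P'_\kappa X$ to conclude $y\in P'_\kappa X$. But reflecting a pair of $\zeta_0$-s-strongly stationary sets requires $P_{\kappa_x}x$ to be $(\zeta_0+1)$-s-strongly stationary, and in the sub-case $\zeta_0+1=\zeta$ this is exactly what your Claim has just shown to fail. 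The alternative route through the minimality of $\zeta$ (reflect $B$ as a $\zeta_0$-strongly stationary set using the $\zeta$-strong stationarity of $P_{\kappa_x}x$, then upgrade to $\zeta_0$-s-strong stationarity at the witness) founders for the reason you yourself identify: the witness $y$ is an arbitrary element of $P_{\kappa_x}x$, and the relativized hypothesis~(2), hence the minimality of $\zeta$, is only available at points of $P'_\kappa X$ with weakly inaccessible index; nothing in the definitions forces such a $y$ to have $y\cap\kappa=\kappa_y$, let alone $\kappa_y$ weakly inaccessible. (When $\zeta_0+1<\zeta$, or $\zeta$ is a limit, the pair-reflection route does work, since minimality applied to $P_{\kappa_x}x$ at level $\zeta_0+1<\zeta$ yields its $(\zeta_0+1)$-s-strong stationarity.) So the case $\zeta=\zeta_0+1$ needs a genuinely new idea — for instance, an argument that the set of $z\in P'_{\kappa_x}x$ with $\kappa_z$ weakly inaccessible is itself $\zeta$-strongly stationary in $P_{\kappa_x}x$, so that $B$ can be reflected \emph{into} that set — and ``passing to traces and reflecting inside $P'_{\kappa_x}x$'' as stated does not supply it.
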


\begin{corollary}
    $\B'_1$ is a base for $\tau'_1$.
\end{corollary}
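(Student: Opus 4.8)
The plan is to invoke Theorem~\ref{theorem_base_characterization_ii} with $\xi = 1$; it then suffices to verify condition (2) of that theorem, namely that for every $\zeta \leq 1$, every $x \in P'_\kappa X$ with $\kappa_x$ weakly inaccessible, and every $A \subseteq P_\kappa X$, the $\zeta$-strong stationarity of $A$ in $P_{\kappa_x} x$ implies its $\zeta$-s-strong stationarity in $P_{\kappa_x} x$.

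For $\zeta = 0$ there is nothing to prove: by Definition~\ref{definition_xi_s_stationary}, both $0$-strong stationarity and $0$-s-strong stationarity in $P_{\kappa_x} x$ are by definition equivalent to $\strsub$-cofinality in $P_{\kappa_x} x$.

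For $\zeta = 1$, I would simply appeal to Proposition~\ref{proposition_strong_vs_1_s_stationarity}. Since $\kappa_x$ is weakly inaccessible it is in particular regular, so that proposition applies and tells us that $1$-strong stationarity and $1$-s-strong stationarity of $A$ in $P_{\kappa_x} x$ are equivalent; in particular, $1$-strong stationarity implies $1$-s-strong stationarity, which is what is needed.

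I do not anticipate any genuine obstacle: all the substantive content is already contained in Theorem~\ref{theorem_base_characterization_ii} and Proposition~\ref{proposition_strong_vs_1_s_stationarity}. The only point worth flagging is that the hypothesis that $\kappa_x$ is weakly inaccessible, built into condition (2) of Theorem~\ref{theorem_base_characterization_ii}, is exactly what supplies the regularity of $\kappa_x$ required by Proposition~\ref{proposition_strong_vs_1_s_stationarity}; without it the case $\zeta = 1$ would not follow so directly.
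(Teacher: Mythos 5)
Your proposal is correct and is exactly the paper's argument: the paper also cites Theorem~\ref{theorem_base_characterization_ii} together with Proposition~\ref{proposition_strong_vs_1_s_stationarity}, and your explicit case split ($\zeta=0$ trivial by definition, $\zeta=1$ via the regularity of the weakly inaccessible $\kappa_x$) just fills in the details the paper leaves implicit.
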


\begin{proof}
    This is immediate from Proposition \ref{proposition_strong_vs_1_s_stationarity}
    and Theorem \ref{theorem_base_characterization_ii}.
\end{proof}

We saw above that the topology $(P_\kappa X, \tau_1)$ can be 
characterized by specifying that, if $x \in P_\kappa X$ and 
$A \subseteq P_\kappa X$, then $x$ is a limit point of $A$ if 
and only if $A$ is strongly $1$-s-stationary in $P_{\kappa_x} x$. 
By Proposition \ref{proposition_strong_vs_1_s_stationarity},
if $\kappa_x$ is regular, then this is equivalent to $A$ being 
$1$-strongly stationary in $P_{\kappa_x} x$, and if $\kappa_x$ is 
Mahlo, it is in turn equivalent to $A$ being strongly stationary 
in $P_{\kappa_x} x$. One can ask if there is a variant on this 
topology in which limit points are characterized by 
\emph{stationarity} in the sense of \cite{MR0325397} (recall 
the discussion at the end of Section \ref{section_strong_stationarity}). 
We now show that the answer is positive as long as $\kappa$ is 
weakly inaccessible and one only 
requires this of $x \in P_\kappa X$ for which $\kappa_x$ is weakly 
inaccessible. We first establish the following proposition.

\begin{proposition} \label{proposition_stat}
    Suppose that $\kappa$ is weakly inaccessible, $A \subseteq P_\kappa X$ and the set
    \[
      \{x \in P_\kappa X \mid \kappa_x \text{ is regular and }
      A \cap P_{\kappa_x} x \text{ is stationary in }
      P_{\kappa_x}x\}
    \]
    is stationary in $P_\kappa X$. Then $A$ is stationary.
\end{proposition}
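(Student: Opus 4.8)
The plan is to prove the contrapositive in spirit: given an arbitrary club $C \subseteq P_\kappa X$, we must find a point of $A$ inside $C$. The key idea is to intersect $C$ with an auxiliary club $C'$ chosen so that membership in $C' \cap P_{\kappa_x} x$ reflects down to the structure $P_{\kappa_x} x$ in a useful way, and then apply the hypothesis at a point $x$ where $\kappa_x$ is regular and $A \cap P_{\kappa_x} x$ is stationary in $P_{\kappa_x} x$.

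Concretely, first I would fix a function $F \colon P_\kappa X \to P_\kappa X$ witnessing that $C$ contains the club $C_F$ of its closure points, i.e. take $F$ so that $C_F = \{x \in P_\kappa X \mid F[P_{\kappa_x} x] \subseteq P_{\kappa_x} x\} \subseteq C$ (this uses the standard fact that $\subseteq$-clubs in $P_\kappa X$ are generated by such $C_F$ when $\kappa$ is weakly inaccessible, so that the closure operation makes sense). Then I would consider the set $S$ of all $x \in P_\kappa X$ with $\kappa_x$ regular and $A \cap P_{\kappa_x} x$ stationary in $P_{\kappa_x} x$; by hypothesis $S$ is stationary, so $S \cap C_F \neq \emptyset$. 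Fix $x \in S \cap C_F$. Since $x \in C_F$, we have $F[P_{\kappa_x} x] \subseteq P_{\kappa_x} x$, which means that $C_F \cap P_{\kappa_x} x$ is a club in $P_{\kappa_x} x$ (the closure-point set of the restricted function $F \restriction P_{\kappa_x} x$, and $\kappa_x$ regular makes this a genuine club in the sense of \cite{MR0325397}).

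Now the two pieces fit together: $A \cap P_{\kappa_x} x$ is stationary in $P_{\kappa_x} x$ and $C_F \cap P_{\kappa_x} x$ is club in $P_{\kappa_x} x$, so they meet; pick $y \in A \cap C_F \cap P_{\kappa_x} x$. Then $y \in A$, and since $C_F \subseteq C$ we get $y \in C$ as well, so $A \cap C \neq \emptyset$. As $C$ was an arbitrary club in $P_\kappa X$, this shows $A$ is stationary.

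The main obstacle I anticipate is the bookkeeping around the closure function $F$ and making precise the claim that its restriction $F \restriction P_{\kappa_x} x$ generates a genuine Jech-style club in $P_{\kappa_x} x$ — in particular verifying $\subseteq$-cofinality of $C_F \cap P_{\kappa_x} x$ in $P_{\kappa_x} x$ and closure under unions of $\subseteq$-directed subfamilies of size $< \kappa_x$. Cofinality needs the regularity of $\kappa_x$ (to iterate $F$ fewer than $\kappa_x$ times and take a union), and closure is essentially automatic from the definition of $C_F$. One should also double-check the harmless edge case where $P_{\kappa_x} x$ might fail to have enough structure, but since $x \in C_F$ forces $\kappa_x$ to be a limit cardinal and we are additionally assuming $\kappa_x$ regular, hence weakly inaccessible, this is fine. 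Everything else is routine.
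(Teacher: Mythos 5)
Your overall strategy is the right one and matches the paper's: shrink the given club $C$ to a set of ``closure points'' of a function, reflect down to some $P_{\kappa_x}x$ with $\kappa_x$ regular where $A$ is stationary, and intersect there. But there is a genuine gap in the choice of closure notion. The sets $C_F=\{x \mid F[P_{\kappa_x}x]\subseteq P_{\kappa_x}x\}$ for $F:P_\kappa X\to P_\kappa X$ are \emph{weak clubs}, not clubs, and they are not generated by nor do they generate the club filter. The direction you actually need --- that $C_F$ is in the club filter, so that the merely stationary set $S$ meets it --- fails in general: $C_F$ is not closed under unions of $\subseteq$-increasing chains (a set $y\prec\bigcup_{i}z_i$ of infinite size need not be contained in, let alone a strong subset of, any single $z_i$, so you lose control of $F(y)$), and the paper's own Fact in Section 2 needs $\kappa$ \emph{Mahlo} just to show such sets are $\prec$-cofinal; by Carr--Levinski--Pelletier, when $\kappa$ is weakly inaccessible but not Mahlo the $C_F$'s do not even generate a proper filter. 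Since strong stationarity is strictly stronger than stationarity, a stationary $S$ can be disjoint from a set of the form $C_F$. The same defect reappears one level down: your claim that $C_F\cap P_{\kappa_x}x$ is a Jech club in $P_{\kappa_x}x$ requires exactly the kind of regular fixed points that only Mahloness of $\kappa_x$ (not mere regularity) provides, and closure under unions of chains is \emph{not} automatic, for the same ``diagonal subset'' reason.

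The repair is to use a closure function of \emph{finite arity}, which is what the paper does: by Menas's theorem there is $f:[X]^2\to P_\kappa X$ with $B_f=\{x \mid f``[x]^2\subseteq P(x)\}\subseteq C$, and the refined set $C_f=\{x\mid f``[x]^2\subseteq P_{\kappa_x}x\}$ \emph{is} a genuine club in $P_\kappa X$ (and its restriction to $P_{\kappa_x}x$ is a club there when $\kappa_x$ is regular), precisely because a pair $\{a,b\}$ from an increasing union is already present at some earlier stage. With that substitution your argument becomes the paper's proof. As written, however, the step ``$S$ is stationary, so $S\cap C_F\neq\emptyset$'' is unjustified and cannot be justified for arbitrary $F:P_\kappa X\to P_\kappa X$.
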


\begin{proof}
    Fix a club $C$ in $P_\kappa X$. Since $\kappa$ is regular and 
    uncountable, by \cite[Theorem 1.5]{MR0357121}, we can find a function $f : [X]^2 \rightarrow P_\kappa X$ 
    such that $B_f \subseteq C$, where 
    \[
      B_f := \{x \in P_\kappa X \mid f``[x]^2 \subseteq P(x)\}.
    \]
    We actually get slightly more. Namely, let $C_f$ be the 
    set of $x \in P_\kappa X$ for which $f``[x]^2 \subseteq 
    P_{\kappa_x}x$. Then clearly $C_f \subseteq B_f \subseteq C$, and, 
    moreover, $C_f$ is club in $P_\kappa X$. To see this, simply note 
    that $C_f$ is clearly closed and, if $\langle y_n \mid n < \omega 
    \rangle$ is a $\prec$-increasing sequence of elements of 
    $B_f$, then $\bigcup \{y_n \mid n < \omega\} \in C_f$, so 
    $C_f$ is cofinal in $P_\kappa X$. (This is where we use the 
    fact that $\kappa$ is inaccessible, and hence a limit cardinal).

    By assumption, we can find $x \in P_\kappa X$ such that
    \begin{enumerate}
        \item $\kappa_x$ is regular;
        \item $A \cap P_{\kappa_x} x$ is stationary in $P_{\kappa_x} x$;
        \item $x \in C_f$.
    \end{enumerate}
    Since $x \in C_f$, we know that $g := f \restriction [x]^2$ satisfies 
    $g:[x]^2 \rightarrow P_{\kappa_x}x$. Since $\kappa_x$ is regular, 
    it follows that $B_g$ is a club in $P_{\kappa_x}x$. Then item (2) 
    above implies that $B_g \cap A \cap P_{\kappa_x} x \neq 0$. 
    Since $B_g \subseteq B_f \subseteq C$, it follows that $C \cap A \neq 0$. 
    The choice of $C$ was arbitrary, and hence $A$ is stationary in 
    $P_\kappa X$.
\end{proof}

Note that Proposition \ref{proposition_stat} fails if $\kappa > \aleph_1$ is a
successor cardinal. Indeed, if $\kappa = \nu^+ > \aleph_1$, then 
$A = P_\nu X$ satisfies the hypothesis of 
Proposition \ref{proposition_stat} but is not stationary in 
$P_\kappa X$.

Now, if $\kappa$ is weakly inaccessible, define a function 
$c:P(P_\kappa X) \rightarrow P(P_\kappa X)$ by letting 
\[
  c(A) = A \cup \{x \in P_\kappa X \mid \kappa_x \text{ is weakly 
  inaccessible and } A \cap P_{\kappa_x} x \text{ is stationary in }
  P_{\kappa_x} x\}.
\]
Proposition \ref{proposition_stat} implies that $c$ is a closure 
operator. If $\tau$ is the topology 
\[\{U\subseteq P_\kappa X\st c(P_\kappa X\setminus U)=P_\kappa X\setminus U\}\]
on $P_\kappa X$ generated by $c$, then, clearly $\tau$ is a witness to the following.

\begin{corollary}\label{corollary_stationary}
If $\kappa$ is weakly inaccessible and $X$ is a set of ordinals with $\kappa\subseteq X$, then there is a topology $\tau$ on $P_\kappa X$ such that for $A\subseteq P_\kappa X$, $x$ is a limit point of $A$ if and only if $\kappa_x$ is weakly inaccessible and $A\cap P_{\kappa_x}x$ is stationary in $P_{\kappa_x}x$. In particular, $x$ is a nonisolated point of the space $(P_\kappa X,\tau)$ if and only if $\kappa_x$ is weakly inaccessible.
\end{corollary}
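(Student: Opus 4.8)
The plan is to verify that the operator $c$ defined above is a Kuratowski closure operator on $P_\kappa X$; granting this, the family $\tau=\{U\subseteq P_\kappa X\st c(P_\kappa X\setminus U)=P_\kappa X\setminus U\}$ is a topology on $P_\kappa X$ whose associated closure operator is exactly $c$, and then the claimed description of limit points is obtained by computing the derived set directly from $c$.

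Three of the four Kuratowski axioms are routine. We have $c(\emptyset)=\emptyset$ since $\emptyset\cap P_{\kappa_y}y=\emptyset$ is never stationary, while $A\subseteq c(A)$ and monotonicity of $c$ are immediate from the definition. For finite additivity, $c(A)\cup c(B)\subseteq c(A\cup B)$ follows from monotonicity; for the reverse inclusion, suppose $x\in c(A\cup B)\setminus(A\cup B)$, so $\kappa_x$ is weakly inaccessible and $(A\cap P_{\kappa_x}x)\cup(B\cap P_{\kappa_x}x)$ is stationary in $P_{\kappa_x}x$. Since $\kappa_x$ is regular, a union of two nonstationary subsets of $P_{\kappa_x}x$ is nonstationary, so one of $A\cap P_{\kappa_x}x$, $B\cap P_{\kappa_x}x$ is stationary in $P_{\kappa_x}x$ and hence $x\in c(A)\cup c(B)$. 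Here, and throughout, we use the elementary fact that $y\in P_{\kappa_x}x$ implies $P_{\kappa_y}y\subseteq P_{\kappa_x}x$, so that $A\cap P_{\kappa_y}y=(A\cap P_{\kappa_x}x)\cap P_{\kappa_y}y$.

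The substantive axiom is idempotence, $c(c(A))\subseteq c(A)$, and this is where Proposition \ref{proposition_stat} enters. Suppose toward a contradiction that $x\in c(c(A))\setminus c(A)$; then $x\notin A$, the cardinal $\kappa_x$ is weakly inaccessible, and $c(A)\cap P_{\kappa_x}x$ is stationary in $P_{\kappa_x}x$. Writing $A'=A\cap P_{\kappa_x}x$ and using the fact above, one has
\[
c(A)\cap P_{\kappa_x}x=A'\cup\{y\in P_{\kappa_x}x\st \kappa_y\text{ weakly inaccessible and }A'\cap P_{\kappa_y}y\text{ stationary in }P_{\kappa_y}y\}.
\]
If $A'$ were nonstationary in $P_{\kappa_x}x$, then, since $\kappa_x$ is regular and $c(A)\cap P_{\kappa_x}x$ is stationary, the second set in this union would be stationary in $P_{\kappa_x}x$; as every weakly inaccessible cardinal is regular, that set is contained in $\{y\in P_{\kappa_x}x\st \kappa_y\text{ regular and }A'\cap P_{\kappa_y}y\text{ stationary in }P_{\kappa_y}y\}$, which is therefore stationary in $P_{\kappa_x}x$, and Proposition \ref{proposition_stat} (applied with $\kappa_x$ in place of $\kappa$ and $x$ in place of $X$, legitimately since $\kappa_x$ is weakly inaccessible) yields that $A'$ is stationary in $P_{\kappa_x}x$ --- a contradiction. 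Hence $A'=A\cap P_{\kappa_x}x$ is stationary in $P_{\kappa_x}x$; since $\kappa_x$ is weakly inaccessible and $x\notin A$, this gives $x\in c(A)$, contradicting the choice of $x$. The hard part here is precisely this bookkeeping: splitting $c(A)\cap P_{\kappa_x}x$ into the ``trivial'' part $A'$ and the reflecting part, and threading the latter through the regularity of $\kappa_x$ and through Proposition \ref{proposition_stat} at the level of $\kappa_x$.

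With $c$ shown to be a closure operator, $\tau$ as defined above is a topology on $P_\kappa X$ whose closure operator is $c$, so the set of limit points of a given $A\subseteq P_\kappa X$ is $\{x\in P_\kappa X\st x\in c(A\setminus\{x\})\}$. For every $x\in P_\kappa X$ we have $x\notin P_{\kappa_x}x$ (since $\kappa_x=|x\cap\kappa|\leq|x|$), hence $(A\setminus\{x\})\cap P_{\kappa_x}x=A\cap P_{\kappa_x}x$; therefore $x$ is a limit point of $A$ if and only if $\kappa_x$ is weakly inaccessible and $A\cap P_{\kappa_x}x$ is stationary in $P_{\kappa_x}x$, which is the asserted characterization. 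For the ``in particular'' clause, take $A=P_\kappa X$: if $\kappa_x$ is weakly inaccessible then $P_\kappa X\cap P_{\kappa_x}x=P_{\kappa_x}x$ is stationary in itself (as $\kappa_x$ is regular), so $x$ is not isolated, and conversely if $\kappa_x$ is not weakly inaccessible then $x$ is isolated.
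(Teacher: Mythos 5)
Your proof is correct and takes essentially the same route as the paper: the paper defines the same closure operator $c$, asserts that Proposition \ref{proposition_stat} makes it idempotent, and takes $\tau$ to be the induced topology, while you have simply written out the Kuratowski-axiom verification (including the localization of Proposition \ref{proposition_stat} to $P_{\kappa_x}x$) and the limit-point computation that the paper leaves implicit.
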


\section{On Ramseyness and indescribability}\label{section_ramsey}

In this section we answer questions concerning the relationship between Ramseyness and indescribability, which were raised by the first author and Peter Holy \cite{MR4594301} in the context of cardinals, and by the first author and Philip White \cite{CodyWhite} in the two-cardinal context. We provide detailed arguments in the cardinal context and simply state definitions and results in the two-cardinal context since the proofs are similar.

Let us review the definition and some basic properties of canonical functions. We follow the definitions and notation given in \cite{MR2768692}. The sequence of canonical functions $\<f_\alpha\st\alpha<\lambda^+\>$ is a sequence of canonical representatives of the ordinals less than $\lambda^+$ in the generic ultrapower obtained by forcing with any normal ideal $I$ on $Z\subseteq P(\lambda)$. We recursively define $\<f_\alpha\st\alpha<\lambda^+\>$ as follows. For $\alpha<\lambda$ we let 
\[f_\alpha(z)=\ot(z\cap\alpha)\]
for all $z\in Z$. For $\lambda<\alpha<\lambda^+$ define
\[f_\alpha(z)=\sup\{f_{b_{\lambda,\alpha}(\eta)}(z)+1\st \eta\in z\}\]
where $b_{\lambda,\alpha}:\lambda\to\alpha$ is a bijection. Let us note that if we take $Z=\lambda$, then each $f_\alpha$ represents the ordinal $\alpha$ in any generic ultrapower obtained by forcing with a normal ideal on $\lambda$. Whereas, in the two-cardinal setting, if we take $Z=P_\kappa\lambda$, the function $f_\alpha$ represents $\alpha$ in any generic ultrapower obtained by forcing with a normal ideal on $P_\kappa\lambda$.



Let us review some basic definitions concerning ineffable and Ramsey operators on cardinals. For $S\subseteq\kappa$, we say that $\vec{S}=\<S_\alpha\st\alpha\in S\>$ is an \emph{$S$-list} if $S_\alpha\subseteq\alpha$ for all $\alpha\in S$. Given an $S$-list $\vec{S}$, a set $H\subseteq S$ is said to be \emph{homogeneous for $\vec{S}$} if whenever $\alpha,\beta\in H$ with $\alpha<\beta$ we have $S_\alpha=S_\beta\cap\alpha$. If $I$ is an ideal on $\kappa$, we define another ideal $\I(I)$ such that for $S\subseteq\kappa$ we have $S\in\I(I)^+$ if and only if for every $S$-list $\vec{S}=\<S_\alpha\st\alpha\in S\>$ there is a set $H\in P(S)\cap I^+$ which is homogeneous for $\vec{S}$. We say that $\kappa$ is \emph{almost ineffable} if $\kappa\in\I([\kappa]^{<\kappa})^+$ and $\kappa$ is \emph{ineffable} if $\kappa\in\I(\NS_\kappa)^+$. The function $\mathcal{I}$ is referred to as the \emph{ineffable operator on $\kappa$}.

Recall that for a cardinal $\kappa$ and a set $S\subseteq\kappa$, a function $f:[\kappa]^{<\omega}\to\kappa$ is called \emph{regressive on $S$} if $f(x)<\min(x)$ for all $x\in [S]^{<\omega}$. Given a function $f:[\kappa]^{<\omega}\to \kappa$, a set $H\subseteq\kappa$ is said to be \emph{homogeneous for $f$} if $f\restrict [H]^n$ is constant for every $n<\omega$. If $I$ is an ideal on a cardinal $\kappa$, we define another ideal $\R(I)$ such that for $S\subseteq\kappa$ we have $S\in\R(I)^+$ if and only if for every function $f:[\kappa]^{<\omega}\to\kappa$ that is regressive on $S$, there is a set $H\in P(S)\cap I^+$ which is homogeneous for $f$. We say that a set $S\subseteq\kappa$ is \emph{Ramsey in $\kappa$} if $S\in\R([\kappa]^{<\kappa})^+$. Let us note that the definition of Ramsey set and, more generally, the definition of $\R(I)$ given above are standard and have many equivalent formulations (see \cite[Proposition 2.8 and Theorem 2.10]{MR4206111} for details). The function $\mathcal{R}$ is called the \emph{Ramsey operator on $\kappa$}.

For a given ideal $I$ and ideal operator $\O$, such as $\O\in\{\I,\R\}$, we inductively define new ideals by letting
\begin{align*}
\O^0(I)&=I,\\
\O^{\alpha+1}(I)&=\O(\O^\alpha(I)) \textrm{ and}\\
\O^{\alpha}(I)&=\bigcup_{\beta<\alpha}\O^\beta(I).
\end{align*}
We say that a set $S\subseteq\kappa$ is \emph{$\gamma$-almost ineffable} if $S\in\I^\gamma([\kappa]^{<\kappa})$ and we say that $S\subseteq\kappa$ is \emph{$\gamma$-Ramsey in $\kappa$} if $S\in\R^\gamma([\kappa]^{<\kappa})^+$. So, for example, a set $S\subseteq\kappa$ is $1$-Ramsey in $\kappa$ if and only if it is Ramsey in $\kappa$, and $S$ is $2$-Ramsey in $\kappa$ if and only if for every function $f:[\kappa]^{<\omega}\to\kappa$ that is regressive on $S$ there is a set $H$ that is Ramsey in $\kappa$ and homogeneous for $f$. 

Recall that a set $S\subseteq\kappa$ is \emph{$\Pi^1_n$-indescribable in $\kappa$} if $(V_\kappa,\in,R)\models\varphi$ implies there is an $\alpha\in S$ with $(V_\alpha,\in,R\cap V_\alpha)\models\varphi$ whenever $S\subseteq V_\kappa$ and $\varphi$ is a $\Pi^1_n$ sentence. Recall that $\varphi$ is $\Pi^1_0$ if it is first order with one second-order free-variable. When $\kappa$ is $\Pi^1_n$-indescribable, the collection $\Pi^1_n(\kappa)$ of all subseteq of $\kappa$ which are not $\Pi^1_n$-indescribable in $\kappa$ forms a normal ideal on $\kappa$ \cite{MR0281606}. Baumgartner  studied ideals on $\kappa$ of the form $\I^\gamma(\Pi^1_n(\kappa))$ for $\gamma<\kappa^+$ and $n\in\omega\cup\{-1\}$ where for notational convenience we take $\Pi^1_{-1}(\kappa)=[\kappa]^{<\kappa}$ (see \cite[Section 7]{MR0384553} and \cite{MR0540770}). Ideals of the form $\R^\gamma([\kappa]^{<\kappa})$ and $\R^\gamma(\NS_\kappa)$ were introduced by Feng \cite{MR1077260}; note that if $\kappa$ is inaccessible then $\Pi^1_0(\kappa)=\NS_\kappa$.

Bagaria \cite{MR3894041} introduced a notion of the $\Pi^1_\xi$-indescribability of a cardinal $\kappa$ for $\xi<\kappa$. The first author \cite{CodyHigherIndescribability} extended Bagaria's definition and introduced a notion of $\Pi^1_\xi$-indescribability of $\kappa$ for $\xi<\kappa^+$.\footnote{Let us note that previously, Sharpe and Welch \cite{MR2817562} had used games to define a notion of $\Pi^1_\xi$-indescribability of a cardinal $\kappa$ for all $\kappa<\xi^+$, but the relationship between their notion and that of \cite{CodyHigherIndescribability} is still not known.} Instead of reviewing the rather lengthy definition, we refer the reader to \cite{CodyHigherIndescribability} for the definition of the $\Pi^1_\xi$-indescribability of a subset $S$ of $\kappa$ for $\xi<\kappa^+$. The $\Pi^1_\xi$-indescribability ideal on $\kappa$ is then 
\[\Pi^1_\xi(\kappa)=\{S\subseteq\kappa\st\text{$S$ is not $\Pi^1_\xi$-indescribable in $\kappa$}\}.\]
Let us note that, in some sense, the definition of $\Pi^1_\xi$-indescribability does not play a large role in what follows because it is being ``black boxed'' by Lemma \ref{lemma_ideal_containment} and Theorem \ref{theorem_almost_ineffable} (see the proof of Corollary \ref{corollary_indescribable_initial_segment}).

Ideals of the form $\R^\gamma(\Pi^1_\xi(\kappa))$ were studied by the first author \cite{MR4206111}, and more generally, ideals of the form $\I^\gamma(\Pi^1_\xi(\kappa))$ and $\R^\gamma(\Pi^1_\xi(\kappa))$ for $\gamma<\kappa^+$ and $\xi\in\kappa\cup\{-1\}$ were studied by the first author and Peter Holy \cite{MR4594301} (in fact the framework presented in \cite{MR4594301} handles many ideal operators other than $\I$ and $\R$).

Notice that for a carinal $\kappa$, to each ideal of the form $\O^\gamma(\Pi^1_\xi(\kappa))$ where $\O\in\{\I,\R\}$, $\gamma<\kappa^+$ and $\xi\in\kappa^+\cup\{-1\}$, there is a corresponding large cardinal hypothesis, namely $\kappa\in\O^\gamma(\Pi^1_\xi(\kappa))^+$.

\begin{definition}
Suppose $\kappa$ is a cardinal, $\gamma<\kappa^+$ and $\xi\in\kappa^+\cup\{-1\}$. Let $\Pi^1_{-1}(\kappa)=[\kappa]^{<\kappa}$. We say that $\kappa$ is \emph{$\gamma$-$\Pi^1_\xi$-ineffable} if $\kappa\in\I^\gamma(\Pi^1_\xi(\kappa))^+$, and $\kappa$ is \emph{$\gamma$-$\Pi^1_\xi$-Ramsey} if $\kappa\in\R^\gamma(\Pi^1_\xi(\kappa))^+$.
\end{definition}
So for example, $\kappa$ is $\gamma$-$\Pi^1_{-1}$-ineffable if and only if it is $\gamma$-almost ineffable, and $\kappa$ is $\gamma$-$\Pi^1_{-1}$-Ramsey if and only if it is $\gamma$-Ramsey.

Recall that Baumgartner proved \cite[Theorem 4.1]{MR0384553} that when $\kappa$ is a subtle cardinal the set 
\[\{\alpha<\kappa\st \textrm{$\alpha$ is $\Pi^1_n$-indescribable for all $n<\omega$}\}\]
is in the subtle filter on $\kappa$. More generally, the first author and Peter Holy proved that when $\kappa$ is subtle the set
\[\{\alpha<\kappa\st\textrm{$\alpha$ is $\Pi^1_\xi$-indescribable for all $\xi<\alpha^+$}\}\]
is in the subtle filter on $\kappa$. Since whenever a cardinal $\kappa$ is Ramsey it must also be subtle, it follows that the existence of a Ramsey cardinal is strictly stronger in consistency strength than the existence of a cardinal $\kappa$ such that $\kappa$ is $\Pi^1_\xi$-indescribable for all $\xi<\kappa^+$. Furthermore, as shown in \cite{MR4594301}, this result can be pushed up the almost ineffability hierarchy, in the sense that the existence of a $2$-almost ineffable cardinal is strictly stronger than the existence of a cardinal $\kappa$ that is $1$-$\Pi^1_\xi$-ineffable for all $\xi<\kappa^+$ as follows.

\begin{theorem}[{\cite[Theorem 3.8]{MR4594301}}]\label{theorem_almost_ineffable} Suppose $\gamma<\kappa^+$, $S\in\I^{\gamma+1}([\kappa]^{<\kappa})^+$ and $\vec{S}=\<S_\alpha\st\alpha\in S\>$ is an $S$-list. Let $A$ be the set of all ordinals $\alpha$ such that
\[\exists X\subseteq S\cap\alpha\left[(\forall\xi<\alpha^+\ X\in\I^{f^\kappa_\gamma(\alpha)}(\Pi^1_\xi(\alpha))^+)\land(X\cup\{\alpha\}\textrm{ is hom. for $\vec{S}$})\right].\]
Then, $S\setminus A\in\I^{\gamma+1}([\kappa]^{<\kappa})$.
\end{theorem}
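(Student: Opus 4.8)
The plan is to prove this by induction on $\gamma < \kappa^+$, using the fact (Lemma~\ref{lemma_ideal_containment}, which we invoke as a black box) that the $\Pi^1_\xi$-indescribability ideals form an increasing chain that sits below (appropriate iterates of) the almost-ineffability ideal. The base case $\gamma = 0$ (or the relevant launching point) will be handled by the result of Baumgartner and its extension by the first author and Holy, stated in the excerpt: when $\kappa$ is $1$-almost ineffable — equivalently $\kappa \in \I([\kappa]^{<\kappa})^+$, and in particular for $S \in \I^1([\kappa]^{<\kappa})^+$ — the set of $\alpha < \kappa$ that are $\Pi^1_\xi$-indescribable for all $\xi < \alpha^+$ is in the almost-ineffable filter, so one can run the homogeneity argument for $\vec S$ inside that filter.

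The first step is to set up the induction hypothesis: assume the theorem holds for all $\gamma' < \gamma$, fix $S \in \I^{\gamma+1}([\kappa]^{<\kappa})^+$ and an $S$-list $\vec S$, and suppose toward a contradiction that $A$ as defined in the statement satisfies $S \setminus A \in \I^{\gamma+1}([\kappa]^{<\kappa})^+$; rename $S^* := S \setminus A$. By definition of $\I^{\gamma+1} = \I(\I^\gamma)$, applied to the $S^*$-list $\vec S \restriction S^*$, there is a set $H \in P(S^*) \cap \I^\gamma([\kappa]^{<\kappa})^+$ that is homogeneous for $\vec S \restriction S^*$. The idea is then to "thin" $H$ further: for each $\alpha \in H$ we want to find $X_\alpha \subseteq S \cap \alpha$ witnessing membership in $A$, i.e. $X_\alpha \in \I^{f^\kappa_\gamma(\alpha)}(\Pi^1_\xi(\alpha))^+$ for all $\xi < \alpha^+$ and $X_\alpha \cup \{\alpha\}$ homogeneous for $\vec S$. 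Because $H$ itself is homogeneous, the natural candidate for $X_\alpha$ is $H \cap \alpha$; the real work is showing that $H \cap \alpha$ (or a subset of it) is $\I^{f^\kappa_\gamma(\alpha)}(\Pi^1_\xi(\alpha))^+$ for a "large" set of $\alpha \in H$, which is exactly where the canonical function $f^\kappa_\gamma$ enters: since $H \in \I^\gamma([\kappa]^{<\kappa})^+$ and $f^\kappa_\gamma$ represents $\gamma$ in the generic ultrapower by $\I^\gamma([\kappa]^{<\kappa})$, one pulls back the $\gamma$-fold iterated ineffability of $H$ in the ultrapower to get, on an $\I^\gamma$-positive set of $\alpha$, that $H \cap \alpha \in \I^{f^\kappa_\gamma(\alpha)}([\kappa]^{<\kappa})^+$ — and then applies the Baumgartner/Cody--Holy result (or more precisely an iterated version of it, supplied by Lemma~\ref{lemma_ideal_containment} and the inductive hypothesis) to upgrade $[\kappa]^{<\kappa}$ to $\Pi^1_\xi(\alpha)$ uniformly in $\xi < \alpha^+$.

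The main obstacle, as I see it, is this last upgrade: going from $H \cap \alpha \in \I^{f^\kappa_\gamma(\alpha)}([\kappa]^{<\kappa})^+$ to $H \cap \alpha \in \I^{f^\kappa_\gamma(\alpha)}(\Pi^1_\xi(\alpha))^+$ for all $\xi < \alpha^+$ simultaneously, and doing so for $\I^\gamma$-almost-all $\alpha \in H$. This requires both a containment $\I^{\delta}(\Pi^1_\xi(\alpha)) \subseteq \I^{\delta'}([\kappa]^{<\kappa})$ for suitable $\delta, \delta'$ — which is precisely what Lemma~\ref{lemma_ideal_containment} abstracts — and a reflection argument showing the witnessing property is preserved under the canonical-function correspondence; the indescribability hypothesis on $\kappa$ is never used directly here, consistent with the remark in the excerpt that the definition of $\Pi^1_\xi$-indescribability is "black boxed." Once the thinned-out set $H' \subseteq H$ is obtained, each $\alpha \in H'$ lies in $A$ (witnessed by $X_\alpha = H' \cap \alpha$, which is homogeneous for $\vec S$ and of the right positivity degree), so $H' \subseteq A \cap S^* = \emptyset$, contradicting $H' \in \I^\gamma([\kappa]^{<\kappa})^+$. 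Finally, the limit case of the induction on $\gamma$ is routine from $\I^\gamma = \bigcup_{\beta<\gamma}\I^\beta$ together with the monotonicity of the canonical functions $f^\kappa_\beta$.
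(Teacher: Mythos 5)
A preliminary remark: this paper does not actually prove the statement; it imports it verbatim from \cite[Theorem 3.8]{MR4594301}. The closest internal point of comparison is the proof of the analogous Theorem \ref{theorem_ramsey} for the Ramsey operator, and your skeleton (induction on $\gamma$; assume $S\setminus A$ is positive; extract a homogeneous $H\in\I^\gamma([\kappa]^{<\kappa})^+$; argue that $H\cap\alpha$ witnesses $\alpha\in A$ for suitable $\alpha\in H$) does match that template. The problem is that the two steps carrying all of the content are not supplied, and the tools you name cannot supply them. The step you yourself flag as ``the main obstacle''---upgrading $H\cap\alpha\in\I^{f^\kappa_\gamma(\alpha)}([\alpha]^{<\alpha})^+$ to $H\cap\alpha\in\I^{f^\kappa_\gamma(\alpha)}(\Pi^1_\xi(\alpha))^+$ for all $\xi<\alpha^+$---cannot be achieved by any ideal containment. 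Since $[\alpha]^{<\alpha}\subseteq\Pi^1_\xi(\alpha)$, monotonicity of the operator $\I$ gives $\I^{\delta}(\Pi^1_\xi(\alpha))^+\subseteq\I^{\delta}([\alpha]^{<\alpha})^+$: positivity for the $\Pi^1_\xi$-based iterate is the \emph{stronger} statement, so the containment runs in exactly the wrong direction for an ``upgrade.'' Lemma \ref{lemma_ideal_containment} is in any case beside the point: it compares $\I$ with $\R$ over the same base ideal and says nothing about replacing $[\alpha]^{<\alpha}$ by $\Pi^1_\xi(\alpha)$. The actual mechanism (visible in the proof of Theorem \ref{theorem_ramsey}) is a proof by contradiction in which, for each $\alpha$ in a positive subset of $S\setminus A$, one fixes a witnessing $\xi_\alpha<\alpha^+$ together with a concrete witnessing object having no homogeneous set in the relevant positive class, amalgamates these witnesses with $\vec S$ into a single list, and plays the homogeneity of $H$ for the amalgamated list against the induction hypothesis applied to $H$ one stage down (normality of the iterated ideal being used to stabilize the relevant index at limit $\gamma$). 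Your generic-ultrapower ``pull-back'' is likewise only a heuristic; the canonical functions enter the real argument only through the clubs on which $f^\kappa_{\delta+1}(\alpha)=f^\kappa_\delta(\alpha)+1$, respectively on which $f^\kappa_\gamma(\alpha)$ is the appropriate limit.

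The base case has the same defect. The Baumgartner/Cody--Holy fact you cite says that the set of $\alpha<\kappa$ that are $\Pi^1_\xi$-indescribable for all $\xi<\alpha^+$ lies in the subtle (hence almost ineffable) filter. That is a statement about the indescribability of the ordinal $\alpha$; the theorem requires a set $X\subseteq S\cap\alpha$ that is a \emph{positive set} for every ideal $\Pi^1_\xi(\alpha)$ and is simultaneously homogeneous with $\alpha$ for $\vec S$, which is strictly more and is not implied by the indescribability of $\alpha$. Already at $\gamma=0$ one needs the Baumgartner-style coding: for each bad $\alpha$ choose a predicate and a $\Pi^1_{\xi_\alpha}$ sentence witnessing that the candidate trace fails to be indescribability-positive, fold these codes into an auxiliary list, and contradict homogeneity, with additional care because the $\Pi^1_\xi$ formulas for infinite $\xi$ are infinitary and there are $\alpha^+$ many classes to control. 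Relatedly, your closing remark that the definition of $\Pi^1_\xi$-indescribability ``is never used directly'' inverts the paper's comment: the definition is black-boxed in \emph{this} paper precisely because Theorem \ref{theorem_almost_ineffable} is imported; any proof of Theorem \ref{theorem_almost_ineffable} itself must engage with the $\Pi^1_\xi$ formulas explicitly.
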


\begin{corollary}[{\cite[Corollary 3.9]{MR4594301}}]\label{corollary_ineffable_hierarcy}
Suppose $\kappa\in\I^{\gamma+1}([\kappa]^{<\kappa})^+$ where $\gamma<\kappa^+$. Then the set
\[\{\alpha<\kappa\st(\forall\xi<\alpha^+)\ \alpha\in\I^{f^\kappa_\gamma(\alpha)}(\Pi^1_\xi(\alpha))^+\}\]
is in the filter $\I^{\gamma+1}([\kappa]^{<\kappa})^*$. In other words, if $\kappa$ is $\gamma+1$-almost ineffable then the set of $\alpha<\kappa$ which are $f^\kappa_\gamma(\alpha)$-$\Pi^1_\xi$-ineffable for all $\xi<\alpha$ is in the filter $\I^{\gamma+1}([\kappa]^{<\kappa})^*$.
\end{corollary}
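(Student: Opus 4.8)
The plan is to obtain this as an essentially immediate consequence of Theorem~\ref{theorem_almost_ineffable}, applied with $S=\kappa$ and the trivial $S$-list. Write $G$ for the set $\{\alpha<\kappa\st(\forall\xi<\alpha^+)\ \alpha\in\I^{f^\kappa_\gamma(\alpha)}(\Pi^1_\xi(\alpha))^+\}$; the goal is precisely to show that $\kappa\setminus G\in\I^{\gamma+1}([\kappa]^{<\kappa})$, which is the assertion that $G\in\I^{\gamma+1}([\kappa]^{<\kappa})^*$.

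First I would invoke Theorem~\ref{theorem_almost_ineffable} with the given $\gamma$ (noting $\gamma+1<\kappa^+$ since $\kappa^+$ is a limit ordinal), with $S=\kappa$ (which lies in $\I^{\gamma+1}([\kappa]^{<\kappa})^+$ by the hypothesis of the corollary), and with the $S$-list $\vec{S}=\langle\emptyset\st\alpha<\kappa\rangle$. For this particular list every subset $H\subseteq\kappa$ is vacuously homogeneous, so the homogeneity clause in the definition of the set $A$ from the theorem carries no content. Thus the theorem yields that the set
\[A=\{\alpha\st\exists X\subseteq\alpha\ (\forall\xi<\alpha^+)\ X\in\I^{f^\kappa_\gamma(\alpha)}(\Pi^1_\xi(\alpha))^+\}\]
satisfies $\kappa\setminus A\in\I^{\gamma+1}([\kappa]^{<\kappa})$.

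Next I would check that $A\cap\kappa\subseteq G$. Fix $\alpha\in A$ with $\alpha<\kappa$ and let $X\subseteq\alpha$ be a witness. For each $\xi<\alpha^+$ we have $X\in\I^{f^\kappa_\gamma(\alpha)}(\Pi^1_\xi(\alpha))^+$; since $\I^{f^\kappa_\gamma(\alpha)}(\Pi^1_\xi(\alpha))$ is an ideal on $\alpha$ and $X\subseteq\alpha$, the family of positive sets is closed under supersets, so $\alpha\in\I^{f^\kappa_\gamma(\alpha)}(\Pi^1_\xi(\alpha))^+$ as well. As $\xi<\alpha^+$ was arbitrary, $\alpha\in G$. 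Therefore $\kappa\setminus G\subseteq\kappa\setminus(A\cap\kappa)=\kappa\setminus A\in\I^{\gamma+1}([\kappa]^{<\kappa})$, and since ideals are closed under subsets, $\kappa\setminus G\in\I^{\gamma+1}([\kappa]^{<\kappa})$, as desired. The ``in other words'' reformulation is then just the unwinding of the definition of ``$f^\kappa_\gamma(\alpha)$-$\Pi^1_\xi$-ineffable'' as the assertion $\alpha\in\I^{f^\kappa_\gamma(\alpha)}(\Pi^1_\xi(\alpha))^+$.

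I do not anticipate any genuine obstacle: all of the substance is contained in Theorem~\ref{theorem_almost_ineffable}, and the only points requiring any care are (i) selecting the trivial $S$-list so that the homogeneity requirement becomes vacuous, and (ii) the elementary observation that positivity of a subset $X\subseteq\alpha$ with respect to an ideal on $\alpha$ transfers up to $\alpha$ itself. One could alternatively run the argument contrapositively, assuming $\kappa\setminus G\in\I^{\gamma+1}([\kappa]^{<\kappa})^+$ and extracting some $\alpha\in(\kappa\setminus G)\cap A$ to contradict $\alpha\notin G$, but the direct inclusion $A\cap\kappa\subseteq G$ is cleaner.
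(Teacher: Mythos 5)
Your proof is correct and is exactly the intended derivation: the paper states this as an immediate consequence of Theorem~\ref{theorem_almost_ineffable} (citing \cite[Corollary 3.9]{MR4594301}), obtained by taking $S=\kappa$ with a trivial $S$-list so that homogeneity is vacuous, and then noting that positivity of a witness $X\subseteq\alpha$ for $\I^{f^\kappa_\gamma(\alpha)}(\Pi^1_\xi(\alpha))$ passes up to $\alpha$ itself. No gaps.
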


\subsection{New results on Ramseyness and indescribability}

Now let us address the following question, and its generalizations, which were originally posed in \cite{MR4594301}.

\begin{question}\label{question_ramseyness}
Is the existence of a $2$-Ramsey cardinal strictly stronger than the existence of a cardinal $\kappa$ such that $\kappa\in \R(\Pi^1_\xi(\kappa))^+$ for all $\xi<\kappa^+$?
\end{question}

The following lemma is standard and is an easy consequence of Feng's characterization of Ramsey sets in terms of $(\omega,S)$-sequences \cite[Theorem 2.3]{MR1077260}.

\begin{lemma}\label{lemma_ideal_containment}
Suppose $\kappa$ is a Ramsey cardinal. Then
\[\I([\kappa]^{<\kappa})\subseteq\R([\kappa]^{<\kappa}).\]
\end{lemma}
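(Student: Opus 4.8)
The plan is to unwind the definitions of the two ideal operators $\I$ and $\R$ acting on $[\kappa]^{<\kappa}$ and show that every $\I([\kappa]^{<\kappa})$-small set is $\R([\kappa]^{<\kappa})$-small; equivalently, that every set which is Ramsey in $\kappa$ (i.e.\ in $\R([\kappa]^{<\kappa})^+$) is almost ineffable in $\kappa$ (i.e.\ in $\I([\kappa]^{<\kappa})^+$). So fix $S \in \R([\kappa]^{<\kappa})^+$ and an $S$-list $\vec S = \langle S_\alpha \mid \alpha \in S \rangle$; the goal is to produce $H \in P(S) \cap [\kappa]^{<\kappa,+}$ (that is, $H$ of size $\kappa$) which is homogeneous for $\vec S$, meaning $S_\alpha = S_\beta \cap \alpha$ whenever $\alpha < \beta$ are both in $H$.

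The key step is to translate the $S$-list $\vec S$ into a regressive function on $[\kappa]^{<\omega}$ whose homogeneous sets are (essentially) homogeneous for $\vec S$. The standard device: using a fixed pairing/coding, one defines $f : [\kappa]^{<\omega} \to \kappa$ so that for a pair $\{\alpha, \beta\}$ with $\alpha < \beta$, the value $f(\{\alpha,\beta\})$ records the least ordinal witnessing $S_\alpha \neq S_\beta \cap \alpha$ when such a witness exists (and is, say, $0$ otherwise), together with enough bookkeeping on longer tuples to force an infinite homogeneous set for $f$ to be coherent. One then invokes $S \in \R([\kappa]^{<\kappa})^+$ to get $H \in P(S) \cap [\kappa]^{<\kappa,+}$ homogeneous for $f$, and argues that $H$ (or a club-in-$H$-sized piece of it, if minor trimming is needed) is homogeneous for $\vec S$: if some pair in $H$ failed coherence, the constant value of $f$ on $[H]^2$ would be a fixed ordinal $\gamma < \min(H)$ that simultaneously witnesses $S_\alpha \neq S_\beta \cap \alpha$ for \emph{all} pairs $\alpha < \beta$ in $H$, and a short argument (comparing three elements of $H$, or using that $H$ has size $\kappa$ so one can diagonalize) shows this is impossible. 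As the excerpt indicates, the cleanest route is to quote Feng's characterization \cite[Theorem 2.3]{MR1077260} of Ramsey sets via $(\omega, S)$-sequences (equivalently, via regressive functions on $[\kappa]^{<\omega}$ and $I$-positive homogeneous sets), so that the combinatorial extraction of a $\vec S$-homogeneous set from an $f$-homogeneous set is exactly the content already packaged there.

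I expect the main obstacle to be purely expository rather than mathematical: pinning down the exact form of Feng's $(\omega, S)$-sequence characterization being invoked and checking that the coding of $\vec S$ into a regressive $f$ interacts correctly with it, so that ``homogeneous for $f$ and $I$-positive'' delivers ``homogeneous for $\vec S$ and $I$-positive'' with $I = [\kappa]^{<\kappa}$. Once that dictionary is fixed, the proof is a one-paragraph verification, which is why the paper calls it standard. No large-cardinal strength beyond Ramseyness of $\kappa$ is used; the hypothesis that $\kappa$ is Ramsey is needed only to ensure $\kappa \in \R([\kappa]^{<\kappa})^+$, hence that the ideals in question are proper and Feng's characterization applies.
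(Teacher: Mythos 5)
Your proposal is correct and matches the paper's treatment: the paper gives no argument beyond observing that the lemma is standard and follows from Feng's characterization of Ramsey sets via $(\omega,S)$-sequences \cite[Theorem 2.3]{MR1077260}, which is exactly the route you identify, and your direct coding of the $S$-list into a regressive function on pairs (with the three-element symmetric-difference argument ruling out a constant disagreement point) is the standard underlying proof. The only caveat is the minor coding issue you already flag -- distinguishing ``no disagreement'' from ``least disagreement at $0$'' -- which is easily repaired and does not affect correctness.
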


\begin{corollary}[{\cite[Theorem 10.3]{MR4594301}}]\label{corollary_indescribable_initial_segment}
Suppose $S\in\R([\kappa]^{<\kappa})^+$ and let
\[T=\{\alpha\in S\st (\forall\xi<\alpha^+)\ S\cap\alpha\in \Pi^1_\xi(\alpha)^+\}.\]
Then $S\setminus T\in\R([\kappa]^{<\kappa})$.
\end{corollary}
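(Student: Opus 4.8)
The plan is to reduce everything to Theorem~\ref{theorem_almost_ineffable} with the index $\gamma = 0$, using Lemma~\ref{lemma_ideal_containment} as the bridge between the Ramsey ideal and the almost ineffability ideal $\I([\kappa]^{<\kappa}) = \I^1([\kappa]^{<\kappa})$. First I would note that, since $S \in \R([\kappa]^{<\kappa})^+$, the cardinal $\kappa$ is Ramsey: given any regressive $f : [\kappa]^{<\omega} \to \kappa$, restricting it to $[S]^{<\omega}$ yields a regressive function on $S$, and a homogeneous set $H \subseteq S$ of size $\kappa$ is homogeneous for $f$. Hence Lemma~\ref{lemma_ideal_containment} applies, giving $\I([\kappa]^{<\kappa}) \subseteq \R([\kappa]^{<\kappa})$, equivalently $\R([\kappa]^{<\kappa})^+ \subseteq \I([\kappa]^{<\kappa})^+$; in particular $S \in \I^1([\kappa]^{<\kappa})^+$.

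Next I would apply Theorem~\ref{theorem_almost_ineffable} with $\gamma = 0$ to the $S$-list $\vec{S} = \langle S \cap \alpha \mid \alpha \in S\rangle$. Two simplifications make the conclusion transparent: (i) the canonical function $f^\kappa_0$ is identically $0$, so $\I^{f^\kappa_0(\alpha)}(\Pi^1_\xi(\alpha)) = \I^0(\Pi^1_\xi(\alpha)) = \Pi^1_\xi(\alpha)$ for every $\alpha$ and $\xi$; and (ii) since $\vec{S}$ consists of initial segments of $S$, the condition that $X \cup \{\alpha\}$ be homogeneous for $\vec{S}$ is automatic (for $\beta < \beta'$ in $S$ we have $(S\cap\beta') \cap \beta = S \cap \beta$). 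With these observations, the set $A$ delivered by the theorem is simply
\[
  A = \{\alpha \mid \exists X \subseteq S \cap \alpha \ \forall \xi < \alpha^+ \ X \in \Pi^1_\xi(\alpha)^+\},
\]
and the theorem yields $S \setminus A \in \I^1([\kappa]^{<\kappa}) = \I([\kappa]^{<\kappa})$.

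Finally I would check that $T = S \cap A$. For $T \subseteq S \cap A$: if $\alpha \in T$, then $S \cap \alpha \in \Pi^1_\xi(\alpha)^+$ for all $\xi < \alpha^+$ by definition of $T$, so $X = S \cap \alpha$ witnesses $\alpha \in A$. For the reverse inclusion: if $\alpha \in S$ and $\alpha \in A$ as witnessed by $X \subseteq S \cap \alpha$, then since each $\Pi^1_\xi(\alpha)$ is an ideal and $X \subseteq S \cap \alpha$, the superset $S \cap \alpha$ is $\Pi^1_\xi$-indescribable in $\alpha$ for all $\xi < \alpha^+$, i.e.\ $\alpha \in T$. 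Since $S \setminus T = S \setminus (S \cap A) = S \setminus A$, we conclude $S \setminus T \in \I([\kappa]^{<\kappa}) \subseteq \R([\kappa]^{<\kappa})$, as desired.

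I do not anticipate a genuine obstacle: all of the mathematical content sits inside Lemma~\ref{lemma_ideal_containment} and Theorem~\ref{theorem_almost_ineffable}, and the remaining work is the bookkeeping needed to instantiate the latter at $\gamma = 0$ with the initial-segment list. The only point demanding a moment of care is verifying the two simplifications above --- that $f^\kappa_0 \equiv 0$ and that homogeneity for an initial-segment list is vacuous --- since these are precisely what make the quantifier-heavy description of $A$ in Theorem~\ref{theorem_almost_ineffable} collapse onto the definition of $T$.
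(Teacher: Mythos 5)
Your proposal is correct and follows essentially the same route as the paper: invoke Lemma~\ref{lemma_ideal_containment} to get $S\in\I([\kappa]^{<\kappa})^+$, apply Theorem~\ref{theorem_almost_ineffable} with $\gamma=0$, and observe that $S\setminus T\subseteq S\setminus A\in\I([\kappa]^{<\kappa})\subseteq\R([\kappa]^{<\kappa})$. The paper leaves implicit the bookkeeping you spell out (the choice of the initial-segment list, $f^\kappa_0\equiv 0$, and the comparison of $A$ with $T$), and your verifications of those points are accurate.
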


\begin{proof}

Suppose $S\in\R([\kappa]^{<\kappa})^+$ and let $T$ be as in the statement of the corollary. By Lemma \ref{lemma_ideal_containment} we see that $S\in\I([\kappa]^{<\kappa})^+$ and by Theorem \ref{theorem_almost_ineffable} we have $S\setminus A\in\I([\kappa]^{<\kappa})\subseteq\R([\kappa]^{<\kappa})$. But $A\subseteq T$ so $S\setminus T\subseteq S\setminus A$ and hence $S\setminus T\in\R([\kappa]^{<\kappa})$.
\end{proof}

The next result shows that Corollary \ref{corollary_indescribable_initial_segment} can, in a sense, be pushed up the Ramsey hierarchy, and provides an affirmative answer to Question 10.4, Question 10.5, Question 10.6 and Question 10.9 in \cite{MR4594301}; it is at present the best known generalization of Theorem \ref{theorem_almost_ineffable} from the context of the ineffable operator to that of the Ramsey operator.

\begin{theorem}\label{theorem_ramsey}
Suppose $\gamma<\kappa^+$, $S\in\R^{\gamma+1}([\kappa]^{<\kappa})^+$ and let 
\[T=\{\alpha\in S\st (\forall \xi<\alpha^+)\ S\cap\alpha\in \R^{f^\kappa_\gamma(\alpha)}(\Pi^1_\xi(\alpha))^+\}.\] Then $S\setminus T\in\R^{\gamma+1}([\kappa]^{<\kappa})$.
\end{theorem}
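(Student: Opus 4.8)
## Proof Proposal

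The plan is to mimic the proof of Corollary \ref{corollary_indescribable_initial_segment}, but now carried out at level $\gamma+1$ of the Ramsey hierarchy, using Theorem \ref{theorem_almost_ineffable} as the black box for the indescribability content and a suitable generalization of Lemma \ref{lemma_ideal_containment} to higher levels as the bridge between the Ramsey and ineffable operators. The key structural fact to establish first is that the ideal inclusion $\I([\kappa]^{<\kappa}) \subseteq \R([\kappa]^{<\kappa})$ of Lemma \ref{lemma_ideal_containment} lifts to all levels, i.e.\ that for every $\gamma < \kappa^+$ we have $\I^{\gamma}([\kappa]^{<\kappa}) \subseteq \R^{\gamma}([\kappa]^{<\kappa})$ whenever $\kappa \in \R^{\gamma}([\kappa]^{<\kappa})^+$ (and more locally, that $\I^{f^\kappa_\gamma(\alpha)}(\Pi^1_\xi(\alpha)) \subseteq \R^{f^\kappa_\gamma(\alpha)}(\Pi^1_\xi(\alpha))$ for the relevant $\alpha$'s and $\xi$'s). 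I would prove this by induction on $\gamma$: the successor step follows from monotonicity of the operator $\O \mapsto \O(I)$ in $I$ together with the base inclusion, once one checks that $\R$ applied to a larger ideal still dominates $\I$ applied to the smaller one — here the standard fact that $\I(I) \subseteq \R(I)$ for any ideal $I$ on a Ramsey-type $\kappa$ (again via Feng's $(\omega,S)$-sequence characterization \cite[Theorem 2.3]{MR1077260}) does the work; the limit step is immediate from the definition of $\O^\gamma(I) = \bigcup_{\beta<\gamma}\O^\beta(I)$.

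With that inclusion in hand, the main argument proceeds as follows. Fix $\gamma < \kappa^+$ and $S \in \R^{\gamma+1}([\kappa]^{<\kappa})^+$. By the lifted inclusion, $S \in \I^{\gamma+1}([\kappa]^{<\kappa})^+$. Now I want to apply Theorem \ref{theorem_almost_ineffable}, but that theorem is stated for an $S$-list $\vec{S}$ and produces a set $A$ built from homogeneity for $\vec{S}$. To extract a statement purely about $S$ (as in Corollary \ref{corollary_indescribable_initial_segment}), I would take $\vec{S} = \langle S \cap \alpha \mid \alpha \in S \rangle$, the canonical $S$-list. A set $H \subseteq S$ is homogeneous for this $\vec{S}$ precisely when for $\alpha < \beta$ in $H$ we have $S \cap \alpha = (S \cap \beta) \cap \alpha$, which holds automatically; so for this particular list, any $X \subseteq S \cap \alpha$ automatically has $X \cup \{\alpha\}$ homogeneous for $\vec S$. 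Thus the set $A$ from Theorem \ref{theorem_almost_ineffable} becomes
\[
  A = \{\alpha \mid \exists X \subseteq S \cap \alpha \ \forall \xi < \alpha^+ \ X \in \I^{f^\kappa_\gamma(\alpha)}(\Pi^1_\xi(\alpha))^+ \},
\]
and Theorem \ref{theorem_almost_ineffable} gives $S \setminus A \in \I^{\gamma+1}([\kappa]^{<\kappa})$, hence $S \setminus A \in \R^{\gamma+1}([\kappa]^{<\kappa})$ by the lifted inclusion.

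It remains to relate $A$ to the target set $T = \{\alpha \in S \mid \forall \xi < \alpha^+ \ S \cap \alpha \in \R^{f^\kappa_\gamma(\alpha)}(\Pi^1_\xi(\alpha))^+\}$. The direction I need is $A \cap S \subseteq T$ — or at least $S \setminus T \subseteq S \setminus A$ modulo a set in the ideal. The issue is that $A$ furnishes \emph{some} witnessing subset $X \subseteq S \cap \alpha$ that is $\I$-positive at every $\Pi^1_\xi$ level, whereas $T$ demands that $S \cap \alpha$ \emph{itself} be $\R$-positive at every such level. To bridge this I would argue: if $\alpha \in A \cap S$, pick the witness $X \subseteq S \cap \alpha$; since $X \subseteq S \cap \alpha$ and $X \in \I^{f^\kappa_\gamma(\alpha)}(\Pi^1_\xi(\alpha))^+$, positivity is upward monotone, so $S \cap \alpha \in \I^{f^\kappa_\gamma(\alpha)}(\Pi^1_\xi(\alpha))^+$; then applying the \emph{local} version of the lifted inclusion at $\alpha$ (valid since $\alpha$, being $\I$-positive at these levels, is in particular Ramsey-like — this needs to be checked, e.g.\ that $\alpha \in \I^{f^\kappa_\gamma(\alpha)}(\Pi^1_{-1}(\alpha))^+$ implies $\alpha \in \R^{f^\kappa_\gamma(\alpha)}(\Pi^1_{-1}(\alpha))^+$, or directly that $\alpha$ is Ramsey from being, say, $1$-almost-ineffable-like) we conclude $S \cap \alpha \in \R^{f^\kappa_\gamma(\alpha)}(\Pi^1_\xi(\alpha))^+$, i.e.\ $\alpha \in T$. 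Hence $S \setminus T \subseteq S \setminus (A \cap S) = S \setminus A \in \R^{\gamma+1}([\kappa]^{<\kappa})$, giving $S \setminus T \in \R^{\gamma+1}([\kappa]^{<\kappa})$ as required.

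The main obstacle I anticipate is precisely the last bridging step: making sure the local ideal inclusion $\I^{f^\kappa_\gamma(\alpha)}(\Pi^1_\xi(\alpha)) \subseteq \R^{f^\kappa_\gamma(\alpha)}(\Pi^1_\xi(\alpha))$ holds under exactly the hypothesis that Theorem \ref{theorem_almost_ineffable} hands us, namely $\alpha$-by-$\alpha$ for those $\alpha$ where $S \cap \alpha$ (or its witness $X$) is $\I$-positive at level $f^\kappa_\gamma(\alpha)$ over $\Pi^1_\xi(\alpha)$ for all $\xi < \alpha^+$. One must verify that these hypotheses already imply $\alpha$ carries the Ramsey-type property needed to run Feng's characterization — e.g.\ that $\alpha$ is (weakly compact, hence) Ramsey, which should follow since $\I$-positivity at any $\Pi^1_\xi$ level entails $\Pi^1_\xi$-indescribability and in particular weak compactness, and the argument of Lemma \ref{lemma_ideal_containment} relativizes. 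A secondary bookkeeping point is the behaviour of the canonical functions $f^\kappa_\gamma$ and the interplay of $f^\kappa_\gamma(\alpha)$ with $\alpha^+$ in the definition of $T$ versus $\alpha$ in Corollary \ref{corollary_ineffable_hierarcy}; I would keep the quantifier as $\forall \xi < \alpha^+$ throughout to match the statement, noting that $f^\kappa_\gamma(\alpha) < \alpha^+$ always holds so all the higher-indescribability ideals invoked are well-defined.
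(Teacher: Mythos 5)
Your reduction to Theorem \ref{theorem_almost_ineffable} works only in the case $\gamma=0$ (where it is literally Corollary \ref{corollary_indescribable_initial_segment}); for $\gamma\geq 1$ the final bridging step uses the ideal inclusion in the wrong direction, and this is a fatal gap. The inclusion you propose to lift, $\I^{\gamma}([\kappa]^{<\kappa})\subseteq\R^{\gamma}([\kappa]^{<\kappa})$ (and its local analogue $\I^{f^\kappa_\gamma(\alpha)}(\Pi^1_\xi(\alpha))\subseteq\R^{f^\kappa_\gamma(\alpha)}(\Pi^1_\xi(\alpha))$), says that the Ramsey-type ideal is the \emph{larger} one, i.e.\ that $\R$-positivity implies $\I$-positivity --- this is the ``Ramsey is stronger'' direction, and it is what lets you pass from $S\in\R^{\gamma+1}([\kappa]^{<\kappa})^+$ to $S\in\I^{\gamma+1}([\kappa]^{<\kappa})^+$ at the start. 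But at the end you need the \emph{converse}: Theorem \ref{theorem_almost_ineffable} hands you a witness $X\subseteq S\cap\alpha$ with $X\in\I^{f^\kappa_\gamma(\alpha)}(\Pi^1_\xi(\alpha))^+$, and to land in $T$ you must upgrade $\I$-positivity of $S\cap\alpha$ to $\R$-positivity of $S\cap\alpha$ at the same level $f^\kappa_\gamma(\alpha)$. That implication is false in general: already at level $1$ it would say that every almost-ineffable-positive subset of $\alpha$ is Ramsey-positive, i.e.\ that almost ineffability implies Ramseyness, whereas Ramseyness is far stronger (this gap in strength is the whole point of Question \ref{question_ramseyness} and of the surrounding discussion in Section \ref{section_ramsey}). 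At $\gamma=0$ the problem disappears only because $\I^0=\R^0=\mathrm{id}$, so both sides collapse to $\Pi^1_\xi(\alpha)^+$. No amount of care with Feng's characterization repairs this, because the obstruction is not local regularity of $\alpha$ but the genuine strictness of the hierarchy.

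The paper avoids this entirely by proving the theorem directly by induction on $\gamma$, using the combinatorial definition of $\R$ rather than routing through $\I$: assuming $S\setminus T$ is $\R^{\gamma+1}([\kappa]^{<\kappa})$-positive, one chooses for each bad $\alpha$ a regressive $g_\alpha$ on $[S\cap\alpha]^{<\omega}$ with no homogeneous set in the relevant positive collection, amalgamates these into a single regressive $f$ on the bad set via $f(\alpha_0,\ldots,\alpha_n)=g_{\alpha_n}(\alpha_0,\ldots,\alpha_{n-1})$, extracts a homogeneous $H$ that is positive one level down, applies the induction hypothesis to $H$ itself to find $\alpha$ with $H\cap\alpha$ of the forbidden positivity, and notes that $H\cap\alpha$ is homogeneous for $g_\alpha$ --- a contradiction (with an extra normality and canonical-function argument at limit $\gamma$). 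If you want to salvage your outline, you would need to replace the appeal to Theorem \ref{theorem_almost_ineffable} with an argument of this shape; the first paragraph of your proposal (the general inclusion $\I(I)\subseteq\R(I)$ and its iteration) is fine but is only needed for the base case.
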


\begin{proof}
If $\gamma=0$ the result follows directly from Corollary \ref{corollary_indescribable_initial_segment}. 

Suppose $\gamma=\delta+1<\kappa^+$ is a successor ordinal, and suppose for a contradiction that $S\setminus T\in\R^{\gamma+1}([\kappa]^{<\kappa})^+$. Recall that the set $C=\{\alpha<\kappa\st f^\kappa_{\delta+1}(\alpha)=f^\kappa_\delta(\alpha)+1\}$ is club in $\kappa$ and thus the set
\[E=(S\setminus T)\cap C\]
is in $\R^{\gamma+1}([\kappa]^{<\kappa})^+$. For each $\alpha\in E$ fix $\xi_\alpha<\alpha^+$ such that \[S\cap\alpha\in\R^{f^\kappa_{\delta}(\alpha)+1}(\Pi^1_{\xi_\alpha}(\alpha))=\R(\R^{f^\kappa_\delta(\alpha)}(\Pi^1_{\xi_\alpha}(\alpha)),\]
and fix a regressive function $g_\alpha:[S\cap\alpha]^{<\omega}\to\alpha$ which has no homogeneous set in $\R^{f^\kappa_\delta(\alpha)}(\Pi^1_{\xi_\alpha}(\alpha))^+$. Let $f:[E]^{<\omega}\to\kappa$ be a regressive function such that
\[f(\alpha_0,\ldots,\alpha_n)=g_{\alpha_n}(\alpha_0,\ldots,\alpha_{n-1})\]
for $n<\omega$ and $(\alpha_0,\ldots,\alpha_n)\in [E]^{n+1}$. Since $E\in\R^{\gamma+1}([\kappa]^{<\kappa})^+$, there is a set $H\in P(E)\cap\R^{\delta+1}([\kappa]^{<\kappa})^+$ homogeneous for $f$. By the inductive hypothesis it follows that if we let
\[T_H=\{\alpha\in H\st(\forall\xi<\alpha^+)\ H\cap\alpha\in\R^{f^\kappa_\delta(\alpha)}(\Pi^1_\xi(\alpha))^+\}\]
then $H\setminus T_H\in \R^{\delta+1}([\kappa]^{<\kappa})$. Thus we can fix an $\alpha\in T_H$. It follows that $H\cap\alpha\in\R^{f^\kappa_\delta(\alpha)}(\Pi^1_{\xi_\alpha}(\alpha))^+$ and $H\cap\alpha\subseteq E\cap\alpha\subseteq S\cap\alpha$ is homogeneous for $g_\alpha$, a contradiction.

Now suppose $\gamma<\kappa^+$ is a limit ordinal, and suppose again for contradiction that $S\setminus T\in\R^{\gamma+1}([\kappa]^{<\kappa})^+$. Recall that $C=\{\alpha<\kappa\st f^\kappa_\gamma(\alpha)\text{ is a limit ordinal}\}$ is a club subset of $\kappa$ and thus
\[E=(S\setminus T)\cap C\]
is in $R^{\gamma+1}([\kappa]^{<\kappa})^+$. For each $\alpha\in E$, using the fact that $\alpha\notin T$, let $\xi_\alpha<\alpha^+$ be such that \[S\cap\alpha\in \R^{f^\kappa_\gamma(\alpha)}(\Pi^1_{\xi_\alpha}(\alpha)).\]
Since $f^\kappa_\gamma(\alpha)=\sup\{f^\kappa_{b_{\kappa,\gamma}(\eta)}(\alpha)+1\st \eta\in \alpha\}<\alpha^+$ is a limit ordinal, we can choose an ordinal $r(\alpha)<\alpha$ such that 
\[S\cap\alpha\in \R^{f^\kappa_{b_{\kappa,\gamma}(r(\alpha))}(\alpha)+1}(\Pi^1_{\xi_\alpha}(\alpha)).\]
This defines a regressive function $r:E\to\kappa$, and by normality of $\R^{\gamma+1}([\kappa]^{<\kappa})$, there is an $E^*\in P(E)\cap \R^{\gamma+1}([\kappa]^{<\kappa})^+$ and some $\beta_0<\kappa$ such that $g(\alpha)=\beta_0$ for all $\alpha\in E^*$. Let $\nu=b_{\kappa,\gamma}(\beta_0)$ and notice that for all $\alpha\in E^*$,
\[S\cap\alpha\in\R^{f^\kappa_\nu(\alpha)+1}(\Pi^1_{\xi_\alpha}(\alpha)).\]

For each $\alpha\in E^*$, we fix a regressive function $g_\alpha:[S\cap\alpha]^{<\omega}\to\kappa$ which has no homogeneous set in $\R^{f^\kappa_\nu(\alpha)}(\Pi^1_{\xi_\alpha}(\alpha))^+$. Let $f:[E^*]^{<\omega}\to\kappa$ be a regressive function such that
\[f(\alpha_0,\ldots,\alpha_n)=g_{\alpha_n}(\alpha_0,\ldots,\alpha_{n-1})\]
for $n<\omega$ and $(\alpha_0,\ldots,\alpha_n)\in[E]^{n+1}$. Since $E^*\in \R^{\gamma+1}([\kappa]^{<\kappa})^+$ there is a set $H\in P(E^*)\cap \R^\gamma([\kappa]^{<\kappa})^+$ homogeneous for $f$. Since $\nu<\gamma$ we have 
\[H\in\R^\gamma([\kappa]^{<\kappa})^+\subseteq\R^{\nu+1}([\kappa]^{<\kappa})^+,\]
and we may apply the inductive hypothesis to see that the set
\[T_H=\{\alpha\in H\st(\forall\xi<\alpha^+)\ H\cap\alpha\in \R^{f^\kappa_\nu(\alpha)}(\Pi^1_\xi(\alpha))^+\}\]
satisfies $H\setminus T_H\in\R^{\nu+1}([\kappa]^{<\kappa})$. Thus we can fix an $\alpha\in T_H$. But then the set \[H\cap\alpha\subseteq E^*\cap\alpha\subseteq E\cap\alpha\subseteq S\cap\alpha\] is in $\R^{f^\kappa_\nu(\alpha)}(\Pi^1_{\xi_\alpha}(\alpha))^+$ and is homogeneous for $g_\alpha$, which is a contradiction.
\end{proof}

\begin{corollary}
Suppose $\gamma<\kappa^+$. If $\kappa$ is $\gamma+1$-Ramsey then the set of $\alpha<\kappa$ which are $f^\kappa_\gamma(\alpha)$-$\Pi^1_\xi$-Ramsey for all $\xi<\alpha^+$ is in the filter $\R^{\gamma+1}([\kappa]^{<\kappa})^*$.
\end{corollary}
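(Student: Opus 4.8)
The plan is to derive this as an immediate specialization of Theorem \ref{theorem_ramsey}, applying it with $S = \kappa$. First I would recall that, by definition, $\kappa$ being $\gamma+1$-Ramsey means exactly $\kappa \in \R^{\gamma+1}([\kappa]^{<\kappa})^+$, so the hypothesis of Theorem \ref{theorem_ramsey} is satisfied by $S = \kappa$. Applying that theorem, the set
\[
  T = \{\alpha < \kappa \st (\forall \xi < \alpha^+)\ \kappa \cap \alpha \in \R^{f^\kappa_\gamma(\alpha)}(\Pi^1_\xi(\alpha))^+\}
\]
satisfies $\kappa \setminus T \in \R^{\gamma+1}([\kappa]^{<\kappa})$, which is to say $T \in \R^{\gamma+1}([\kappa]^{<\kappa})^*$.

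The only remaining point is the routine bookkeeping identification of $T$ with the set named in the statement. For every $\alpha < \kappa$ we have $\kappa \cap \alpha = \alpha$, so membership of $\alpha$ in $T$ amounts to the condition $(\forall \xi < \alpha^+)\ \alpha \in \R^{f^\kappa_\gamma(\alpha)}(\Pi^1_\xi(\alpha))^+$. Unwinding the definition of $\gamma$-$\Pi^1_\xi$-Ramsey (applied with the ordinal $f^\kappa_\gamma(\alpha)$ in the role of $\gamma$ and $\alpha$ in the role of $\kappa$), this is precisely the assertion that $\alpha$ is $f^\kappa_\gamma(\alpha)$-$\Pi^1_\xi$-Ramsey for every $\xi < \alpha^+$. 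Hence $T$ is exactly the set of such $\alpha$, and the conclusion follows.

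Since the corollary is nothing more than Theorem \ref{theorem_ramsey} with $S = \kappa$, there is no genuine obstacle here; the proof is a two-line deduction, and the only step demanding any attention at all is the trivial observation that $\kappa \cap \alpha = \alpha$ together with the unpacking of the $\gamma$-$\Pi^1_\xi$-Ramsey terminology.
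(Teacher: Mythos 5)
Your proposal is correct and is exactly the intended derivation: the paper states this corollary without proof as the immediate specialization of Theorem \ref{theorem_ramsey} to $S=\kappa$, and your two observations (that $\kappa$ being $\gamma+1$-Ramsey means $\kappa\in\R^{\gamma+1}([\kappa]^{<\kappa})^+$, and that $\kappa\cap\alpha=\alpha$ so $T$ is precisely the set of $\alpha$ that are $f^\kappa_\gamma(\alpha)$-$\Pi^1_\xi$-Ramsey for all $\xi<\alpha^+$) are all that is needed.
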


\subsection{New results on two-cardinal Ramseyness}

Let us now discuss two-cardinal versions of the ineffable and Ramsey operator, which are defined using the strong subset ordering $\strsub$. Suppose $\kappa$ is a cardinal and $X$ is a set of ordinals with $\kappa\subseteq X$. For $S\subseteq P_\kappa X$, we say that $\vec{S}=\<S_x\st x\in P_\kappa X\>$ is an $(S,\strsub)$-list if $S_x\subseteq P_{\kappa_x}x$ for all $x\in S$. Given an $(S,\strsub)$-list, a set $H\subseteq S$ is said to be \emph{homogeneous for $\vec{S}$} if whenever $x,y\in H$ with $x\strsub y$ we have $S_x=S_y\cap P_{\kappa_x}x$. If $I$ is an ideal on $P_\kappa X$, we define another ideal $\I_\strsub(I)$ such that for $S\subseteq P_\kappa X$ we have $S\in \I_\strsub(I)^+$ if and only if for every $(S,\strsub)$-list $\vec{S}$ there is a set $H\in P(S)\cap I^+$ which is homogeneous for $\vec{S}$. We say that $P_\kappa X$ is \emph{strongly ineffable} if $P_\kappa X\in\I_\strsub(\NSS_{\kappa,X})^+$ and \emph{almost strongly ineffable} if $P_\kappa X\in\I_\strsub(I_{\kappa,X})^+$. Here $I_{\kappa,X}$ is the ideal on $P_\kappa X$ consisting of all subsets of $P_\kappa X$ which are not $\strsub$-cofinal in $P_\kappa X$.

Let $[S]_\strsub^{<\omega}$ be the collection of all tuples $\vec{x}=(x_0,\ldots,x_{n-1})\in S^n$ such that $n<\omega$ and $x_0\strsub\cdots\strsub x_{n-1}$. A function $f:[P_\kappa X]_\strsub^{<\omega}\to P_\kappa X$ is called \emph{$\strsub$-regressive on $S$} if $f(x_0,\ldots,x_{n-1}) \strsub x_0$ for all $(x_0,\ldots,x_{n-1})\in [S]_\strsub^{<\omega}$. Given a function $f:[P_\kappa X]_\strsub^{<\omega}\to P_\kappa X$, a set $H\subseteq P_\kappa X$ is said to be \emph{homogeneous for $f$} if $f\restrict[H]^n$ is constant for all $n<\omega$. For $S\subseteq P_\kappa X$, let $S\in\R_\strsub(I)^+$ if and only if for every function $f:[P_\kappa X]^{<\omega}\to P_\kappa X$ that is $\strsub$-regressive on $S$, there is a set $H\in P(S)\cap I^+$ which is homogeneous for $f$. We say that $P_\kappa X$ is \emph{strongly Ramsey} if $P_\kappa X\in\R_\strsub(I_{\kappa,X})^+$.

    
The first author and Philip White \cite{CodyWhite} showed that many results from the literature \cite{MR0384553, MR0540770, MR4206111, MR4594301, MR1077260} on the ineffable operator $\I$ and the Ramsey operator $\R$, and their relationship with indescribability, can be extended to $\I_\strsub$ and $\R_\strsub$. For example, by iterating the ideal operators $\I_\strsub$ and $\R_\strsub$, one obtains hierarchies in the two-cardinal setting which are analogous to the classical ineffable and Ramsey hierarchies. One question left open by \cite{CodyWhite} is that which is analogous to Question \ref{question_ramseyness} for the two-cardinal context. For example, if $P_\kappa X\in\R_\strsub^2(I_{\kappa,X})^+$, does it follow that the set
\[\{x\in P_\kappa X\st(\forall \xi<\kappa_x)\ x\in\R_\strsub(\Pi^1_\xi(\kappa_x, x))^+\}\]
is in the filter $\R_\strsub(I_{\kappa,X})^*$?


The proof of Theorem \ref{theorem_ramsey} generalizes in a straight-forward way to establish the following. 

\begin{theorem}
Suppose $\gamma<|X|^+$, $S\in\R_{\strsub}^{\gamma+1}(I_{\kappa,X})^+$ and let 
\[T=\{x\in S\st(\forall \xi<\kappa_x)\ S\cap P_{\kappa_x}x\in \R_{\strsub}^{f_\gamma(x)}(\Pi^1_\xi(\kappa_x,x))^+\}.\]
Then $S\setminus T\in\R_{\strsub}(I_{\kappa,X})$.
\end{theorem}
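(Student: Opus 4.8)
The plan is to transcribe the proof of Theorem \ref{theorem_ramsey} into the two-cardinal setting and run the same induction on $\gamma<|X|^+$. Throughout, I replace the bounded ideal $[\kappa]^{<\kappa}$ by $I_{\kappa,X}$, ordinary regressive functions and homogeneous sets by $\strsub$-regressive functions on $[\,\cdot\,]_\strsub^{<\omega}$ and $\strsub$-homogeneous sets, the cardinal canonical functions $f^\kappa_\gamma$ by the two-cardinal canonical functions $f_\gamma$ obtained by taking $Z=P_\kappa X$ in the recursion reviewed above (so that $f_\gamma$ represents $\gamma$ in generic ultrapowers by normal ideals on $P_\kappa X$), and the indescribability ideals $\Pi^1_\xi(\alpha)$ by the two-cardinal ideals $\Pi^1_\xi(\kappa_x,x)$ of Definition \ref{definition_indescribability}. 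The goal at each stage is the stated conclusion $S\setminus T\in\R_\strsub(I_{\kappa,X})$.

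For the base case $\gamma=0$, note that $f_0(x)=\ot(x\cap 0)=0$ for every $x\in P_\kappa X$, so $\R_\strsub^{f_0(x)}(\Pi^1_\xi(\kappa_x,x))=\Pi^1_\xi(\kappa_x,x)$ and $T$ reduces to $\{x\in S\st (\forall\xi<\kappa_x)\ S\cap P_{\kappa_x}x\in\Pi^1_\xi(\kappa_x,x)^+\}$. The conclusion $S\setminus T\in\R_\strsub(I_{\kappa,X})$ is then precisely the two-cardinal analogue of Corollary \ref{corollary_indescribable_initial_segment}. This I would obtain exactly as in the cardinal case: from the two-cardinal form of Lemma \ref{lemma_ideal_containment}, namely $\I_\strsub(I_{\kappa,X})\subseteq\R_\strsub(I_{\kappa,X})$ (established in \cite{CodyWhite}), together with the two-cardinal analogue of Theorem \ref{theorem_almost_ineffable} at $\gamma=0$, which produces a set $A\subseteq S$ with $S\setminus A\in\I_\strsub(I_{\kappa,X})$ such that every $x\in A$ carries some $Y\subseteq S\cap P_{\kappa_x}x$ that is $\Pi^1_\xi(\kappa_x,x)$-positive for all $\xi<\kappa_x$ and with $Y\cup\{x\}$ $\strsub$-homogeneous for the given list. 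Since a positive subset of $S\cap P_{\kappa_x}x$ witnesses positivity of $S\cap P_{\kappa_x}x$ itself, one has $A\subseteq T$, whence $S\setminus T\subseteq S\setminus A\in\I_\strsub(I_{\kappa,X})\subseteq\R_\strsub(I_{\kappa,X})$.

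For the successor case $\gamma=\delta+1$ I would argue by contradiction in the manner of Theorem \ref{theorem_ramsey}: intersect $S\setminus T$ with the club $C=\{x\st f_{\delta+1}(x)=f_\delta(x)+1\}$ to get a positive set $E$; for each $x\in E$ choose $\xi_x<\kappa_x$ witnessing $\alpha\notin T$ and, using $\R_\strsub^{f_\delta(x)+1}(\Pi^1_{\xi_x})=\R_\strsub(\R_\strsub^{f_\delta(x)}(\Pi^1_{\xi_x}))$, fix a $\strsub$-regressive $g_x$ on $S\cap P_{\kappa_x}x$ with no $\strsub$-homogeneous set positive for $\R_\strsub^{f_\delta(x)}(\Pi^1_{\xi_x}(\kappa_x,x))$; assemble the step-up map $f(x_0,\ldots,x_n)=g_{x_n}(x_0,\ldots,x_{n-1})$ on $[E]_\strsub^{<\omega}$, extract a $\strsub$-homogeneous set $H$, apply the induction hypothesis to $H$ to locate $x\in H$ for which $H\cap P_{\kappa_x}x$ is $\R_\strsub^{f_\delta(x)}(\Pi^1_{\xi_x})$-positive, and observe that $\strsub$-homogeneity of $H$ for $f$ forces $H\cap P_{\kappa_x}x$ to be $\strsub$-homogeneous for $g_x$, contradicting the choice of $g_x$. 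The limit case proceeds identically, first expressing $f_\gamma(x)=\sup\{f_{b_{\lambda,\gamma}(\eta)}(x)+1\st\eta\in x\}$ and using normality of the relevant two-cardinal Ramsey ideal to stabilize the regressive reduction $x\mapsto r(x)$ to a fixed index $\nu<\gamma$, before running the same step-up construction.

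The main obstacle is that all of this rests on two-cardinal infrastructure that must be verified to behave as in the cardinal case, and in particular on the \emph{positivity level} of the extracted homogeneous set being high enough for the induction to close. Concretely, I must confirm that the two-cardinal canonical functions $f_\gamma$ genuinely represent $\gamma$ and that the sets $C$ above are club in $P_\kappa X$; that the iterated ideals $\R_\strsub^{\bullet}(\Pi^1_\xi(\kappa_x,x))$ are (strongly) normal, so that the regressive stabilization in the limit step and the extraction of $\strsub$-homogeneous sets of the correct positivity are legitimate (this is the content of the two-cardinal development in \cite{CodyWhite} and of Proposition \ref{proposition_strongly_normal_ind}); and that the two-cardinal \emph{end-homogeneity} phenomenon holds, i.e.\ that feeding $g_x$ the initial segments of $\strsub$-increasing tuples $x_0\strsub\cdots\strsub x_{n-1}\strsub x$ from $H$ makes $H\cap P_{\kappa_x}x$ $\strsub$-homogeneous for $g_x$. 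Once these transcriptions are checked, the contradiction closes at each $\gamma$ and yields $S\setminus T\in\R_\strsub(I_{\kappa,X})$.
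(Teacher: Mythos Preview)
Your proposal is correct and follows exactly the approach the paper indicates: the paper's entire proof of this theorem is the sentence ``The proof of Theorem~\ref{theorem_ramsey} generalizes in a straight-forward way to establish the following,'' and your transcription of that induction into the two-cardinal setting (with $\strsub$-regressive functions, the canonical functions $f_\gamma$ on $P_\kappa X$, strong normality from \cite{CodyWhite}, and the step-up/end-homogeneity trick) is precisely what is intended. Note only that the stated conclusion $S\setminus T\in\R_\strsub(I_{\kappa,X})$ appears to be a typo for $\R_\strsub^{\gamma+1}(I_{\kappa,X})$ (matching Theorem~\ref{theorem_ramsey}), and indeed your contradiction argument in the successor and limit cases implicitly assumes $S\setminus T\in\R_\strsub^{\gamma+1}(I_{\kappa,X})^+$, which is what the transcription actually yields.
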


\begin{corollary}
Suppose $\gamma<|X|^+$. If $P_\kappa X\in\R_{\strsub}(I_{\kappa,X})^+$, then the set
\[\{x\in P_\kappa X\st (\forall\xi<{\kappa_x})\ P_{\kappa_x}x\in\R_\strsub^{f_\gamma(x)}(\Pi^1_\xi(\kappa_x,x))^+\}\]
is in the filter $\R_{\strsub}^{\gamma+1}(I_{\kappa,X})^*$.
\end{corollary}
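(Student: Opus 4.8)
The plan is to deduce this corollary immediately from the two-cardinal theorem stated just above, in exactly the way that the corresponding cardinal corollary is deduced from Theorem~\ref{theorem_ramsey}: namely, by specializing the ``target set'' $S$ in that theorem to all of $P_\kappa X$.

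Concretely, I would apply the preceding theorem with $S = P_\kappa X$; this is legitimate by the hypothesis on $P_\kappa X$. The theorem then yields $P_\kappa X \setminus T \in \R_\strsub(I_{\kappa,X})$, where
\[
  T = \{x \in P_\kappa X \st (\forall \xi < \kappa_x)\ P_\kappa X \cap P_{\kappa_x} x \in \R_\strsub^{f_\gamma(x)}(\Pi^1_\xi(\kappa_x, x))^+\}.
\]
The next step is to observe that $T$ is literally the set named in the corollary. This reduces to the bookkeeping identity $P_\kappa X \cap P_{\kappa_x} x = P_{\kappa_x} x$, valid for every $x \in P_\kappa X$: since $|x| < \kappa$ we have $\kappa_x = |x \cap \kappa| < \kappa$, so every $y \in P_{\kappa_x} x$ satisfies $y \subseteq x \subseteq X$ and $|y| < \kappa_x < \kappa$, hence $y \in P_\kappa X$; thus $P_{\kappa_x} x \subseteq P_\kappa X$, and substituting this into the display rewrites $T$ as $\{x \in P_\kappa X \st (\forall \xi < \kappa_x)\ P_{\kappa_x} x \in \R_\strsub^{f_\gamma(x)}(\Pi^1_\xi(\kappa_x, x))^+\}$, which is exactly the set in the statement.

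It then remains only to promote $P_\kappa X \setminus T \in \R_\strsub(I_{\kappa,X})$ to $P_\kappa X \setminus T \in \R_\strsub^{\gamma+1}(I_{\kappa,X})$, which is precisely the assertion that $T$ belongs to the dual filter $\R_\strsub^{\gamma+1}(I_{\kappa,X})^*$. For this I would use the elementary monotonicity of the iterated operator: one has $J \subseteq \R_\strsub(J)$ for every ideal $J$, and $\R_\strsub$ is monotone, so the ideals $\R_\strsub^\beta(I_{\kappa,X})$ increase with $\beta$ and in particular $\R_\strsub(I_{\kappa,X}) = \R_\strsub^1(I_{\kappa,X}) \subseteq \R_\strsub^{\gamma+1}(I_{\kappa,X})$. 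I do not expect any genuine obstacle in this argument: essentially all of the content is carried by the preceding theorem (whose proof, as noted there, adapts that of Theorem~\ref{theorem_ramsey} in a routine fashion), and the only points in the deduction requiring even a moment's care are the set-identity $P_\kappa X \cap P_{\kappa_x} x = P_{\kappa_x} x$ and the monotonicity of the $\R_\strsub^\beta$ hierarchy just mentioned.
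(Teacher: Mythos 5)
Your proposal is correct and is essentially the paper's own (implicit) proof: the corollary is stated with no separate argument precisely because it is the preceding theorem specialized to $S = P_\kappa X$, exactly as you do, with the identification $P_\kappa X \cap P_{\kappa_x}x = P_{\kappa_x}x$ and the monotonicity of the $\R_\strsub^\beta$ hierarchy supplying the only bookkeeping. The one point worth flagging is that applying the theorem literally requires $P_\kappa X \in \R_\strsub^{\gamma+1}(I_{\kappa,X})^+$ rather than the stated $\R_\strsub(I_{\kappa,X})^+$ (almost certainly a typo, compare the cardinal analogue); in the degenerate case where $P_\kappa X$ fails to be $\R_\strsub^{\gamma+1}$-positive the dual filter is improper and the conclusion holds vacuously, so your argument survives either reading.
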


\section{Questions and ideas}

Let us formulate a few open questions relavant the topics of this article. For this section, let us assume $\kappa$ is some regular uncountable cardinal and $X\supseteq\kappa$ is a set of ordinals. First, we consider the following questions regarding the consistency strength of various principles considered above.

\begin{question}
What is the consistency strength of ``whenever $S\subseteq P_\kappa X$ is strongly stationary there is some $x\in P_\kappa X$ for which $S\cap P_{\kappa_x}x$ is strongly stationary in $P_{\kappa_x}x$''? Is this similar to the situation for cardinals? Is the strength of this kind of reflection of strong stationary sets strictly between the ``great Mahloness'' of $P_\kappa X$ and the $\Pi^1_1$-indescribability of $P_\kappa X$?
\end{question}

\begin{question}
What is the consistency strength of the $2$-s-strong stationarity of $P_\kappa X$? What is the consistency strength of the hypothesis that whenever $S$ and $T$ are strongly stationary in $P_\kappa X$ there is some $x\in P_\kappa X$ such that $S$ and $T$ are both strongly stationary in $P_{\kappa_x}x$?
\end{question}

The following questions regarding separation of various properties considered in this article remain open.

\begin{question}\label{question_separate1}
Can we separate reflection of strongly stationary sets from pairwise simultaneous reflection of strongly stationary sets? In other words, is it consistent that whenever $S$ is strongly stationary in $P_\kappa X$ there is some $x\in P_\kappa X$ such that $S$ is strongly stationary in $P_{\kappa_x} x$, but at the same time, pairwise reflection fails in the sense that there exists a pair $S,T$ of strongly stationary subsets of $P_\kappa X$ such that for every $x\in P_\kappa X$ both $S$ and $T$ are not strongly stationary in $P_{\kappa_x}x$?
\end{question}

It is conceivable that some two-cardinal $\Box(\kappa)$-like principle could be used to address Questions \ref{question_separate1}. For example, $\Box(\kappa)$ implies that every stationary subset of $\kappa$ can be partitioned into two disjoint stationary sets that do not simultaneously reflect (see \cite[Theorem 2.1]{MR3730566} as well as \cite[Theorem 7.1]{MR4230485} and \cite[Theorem 3.50]{MR4583072} for generalizations).

\begin{question}
Is some two-cardinal $\Box(\kappa)$-like principle formulated using weak clubs (defined in Section \ref{section_strong_stationarity}) consistent? Does it deny pairwise simultaneous reflection of strongly stationary subsets of $P_\kappa X$? 
\end{question}


It is also natural to ask whether the various reflection properties introduced here can be separated from the large cardinal notions that imply them.

\begin{question}
Can we separate $\xi+1$-strong stationarity or $\xi+1$-s-strong stationarity in $P_\kappa X$ from 
\begin{enumerate}
    \item $\Pi^1_\xi$-indescribability in $P_\kappa X$ similar to what was done in \cite{MR4094556}; or 
    \item $\Pi^1_1$-indescribability in $P_\kappa X$ similar to what was done in \cite{BZ}? 
\end{enumerate}
\end{question}

In \cite{MR4094556}, it was shown that consistently $\NS^{\xi+1}_\kappa$ can be non-trivial and $\kappa$ is not $\Pi^1_{\xi}$-indescribable. In \cite[Definition 0.7]{BZ}, a normal version of the ideal $\NS_\kappa^\xi$ was introduced, $\NS^{\xi, d}_\kappa$. It was shown that consistently, $\NS^{\xi, d}_\kappa$ can be non-trivial for all $\xi<\omega$ while $\kappa$ is not even $\Pi^1_1$-indescribable.

\begin{question}
Is it consistent that $\kappa\in \I(\Pi^1_{\xi}(\kappa))$ and $\kappa\not \in \I(\NS^{\xi+1}_\kappa)$.
Is it consistent that $\kappa\in \I(\Pi^1_1(\kappa))$ and $\kappa\notin\I(\NS_\kappa^{\xi,d})$ for all $\xi<\omega$?
\end{question}

Finally, let us consider some questions that arise by considering Proposition \ref{proposition_kappa_x_plus_1} and \cite{CodyHigherIndescribability}. Bagaria noticed that, using the definitions of \cite{MR3894041}, no ordinal $\alpha$ is $\alpha+1$-stationary (see the discussion after Definition 2.6 in \cite{MR3894041}) and no cardinal $\kappa$ is $\Pi^1_\kappa$-indescribable (see the discussion after Definition 4.2 in \cite{MR3894041}). The first author showed that Bagaria's definitions of $\xi$-s-stationarity and derived topologies $\<\tau_\xi\st\xi<\delta\>$ on an ordinal $\delta$, can be modified in a natural way so that a regular cardinal $\mu$ can cary a longer sequence of derived topologies $\<\tau_\xi\st \xi<\mu^+\>$, such that, for each $\xi<\mu$ there is a club $C_\xi$ in $\delta$ such that $\alpha\in C_\xi$ is not isolated in the $\tau_\xi$ topology if and only if $\alpha$ is $f^\mu_\xi(\alpha)$-s-stationary\footnote{Here $f^\mu_\xi:\mu\to\mu$ denotes the $\xi$-th canonical function on $\mu$.} (see \cite[Theorem 6.15]{CodyHigherIndescribability}). The first author also generlized Bagaria's notion of $\Pi^1_\xi$-indescribability so that a cardinal $\kappa$ can be $\Pi^1_\xi$-indescribable for all $\xi<\kappa^+$, and that the $\Pi^1_\xi$-indescribability of $\kappa$ implies the $\xi+1$-s-stationarity of $\kappa$ for all $\xi<\kappa^+$ (see \cite[Proposition 6.18]{CodyHigherIndescribability}). It is natural to ask whether similar techniques can be used to generalize the results in Section \ref{section_derived_topologies} of the present article. For example, can one modify the definition of $\xi$-strong stationarity so that Proposition \ref{proposition_kappa_x_plus_1} can fail for the modified notion?

\begin{question}\label{question_higher1}
Can one use canonical functions to modify the definition of $\xi$-s-strong stationarity so that it is possible for $x\in P_\kappa X$ to be $\xi$-strongly stationary or $\xi$-s-strongly stationary for some $\xi>\kappa_x$?
\end{question}

\begin{question}\label{question_higher2}
Can the definitions of two-cardinal $\Pi^1_\xi$-indescribability (Definition \ref{definition_indescribability}), $\xi+1$-s-strong stationarity (Definition \ref{definition_xi_s_stationary}), and the two-cardinal derived topologies (see Section \ref{section_derived_topologies}) be modified using canonical functions so that Corollary \ref{corollary_stationarity_from_indescribability} might generalize to values of $\xi$ for which $\kappa_x<\xi<|x|^+$ and Theorem \ref{theorem_induction} might generalize to values of $\xi$ for which $\kappa<\xi<|X|^+$?
\end{question}



\begin{thebibliography}{10}

\bibitem{MR1635559}
Yoshihiro Abe.
\newblock Combinatorial characterization of {$\Pi^1_1$}-indescribability in
  {$P_\kappa\lambda$}.
\newblock {\em Arch. Math. Logic}, 37(4):261--272, 1998.

\bibitem{MR3894041}
Joan Bagaria.
\newblock Derived topologies on ordinals and stationary reflection.
\newblock {\em Trans. Amer. Math. Soc.}, 371(3):1981--2002, 2019.

\bibitem{MR4094556}
Joan Bagaria, Menachem Magidor, and Salvador Mancilla.
\newblock The consistency strength of hyperstationarity.
\newblock {\em J. Math. Log.}, 20(1):2050004, 35, 2020.

\bibitem{MR3416912}
Joan Bagaria, Menachem Magidor, and Hiroshi Sakai.
\newblock Reflection and indescribability in the constructible universe.
\newblock {\em Israel J. Math.}, 208(1):1--11, 2015.

\bibitem{MR0384553}
James~E. Baumgartner.
\newblock Ineffability properties of cardinals. {I}.
\newblock pages 109--130. Colloq. Math. Soc. J\'anos Bolyai, Vol. 10, 1975.

\bibitem{MR0540770}
James~E. Baumgartner.
\newblock Ineffability properties of cardinals. {II}.
\newblock In {\em Logic, foundations of mathematics and computability theory
  ({P}roc. {F}ifth {I}nternat. {C}ongr. {L}ogic, {M}ethodology and {P}hilos. of
  {S}ci., {U}niv. {W}estern {O}ntario, {L}ondon, {O}nt., 1975), {P}art {I}},
  pages 87--106. Univ. Western Ontario Ser. Philos. Sci., Vol. 9. Reidel,
  Dordrecht, 1977.

\bibitem{BZ}
Tom Benhamou and Jing Zhang.
\newblock Transferring compactness.
\newblock preprint, \url{https://doi.org/10.48550/arXiv.2307.06910}.

\bibitem{MR4583072}
H.~Brickhill and P.~D. Welch.
\newblock Generalisations of stationarity, closed and unboundedness, and of
  {J}ensen's {$\square$}.
\newblock {\em Ann. Pure Appl. Logic}, 174(7):Paper No. 103272, 52, 2023.

\bibitem{MR0808767}
Donna~M. Carr.
\newblock {$P_\kappa\lambda$}-generalizations of weak compactness.
\newblock {\em Z. Math. Logik Grundlag. Math.}, 31(5):393--401, 1985.

\bibitem{MR1074449}
Donna~M. Carr, Jean-Pierre Levinski, and Donald~H. Pelletier.
\newblock On the existence of strongly normal ideals over {$P_\kappa\lambda$}.
\newblock {\em Arch. Math. Logic}, 30(1):59--72, 1990.

\bibitem{CodyHigherIndescribability}
Brent Cody.
\newblock Higher indescribability and derived topologies.
\newblock (\emph{Accepted at Journal of Mathematical Logic}, available at
  \url{https://arxiv.org/pdf/2105.01776.pdf}).

\bibitem{MR4082998}
Brent Cody.
\newblock Characterizations of the weakly compact ideal on {$P_\kappa\lambda$}.
\newblock {\em Ann. Pure Appl. Logic}, 171(6):102791, 23, 2020.

\bibitem{MR4206111}
Brent Cody.
\newblock A refinement of the {R}amsey hierarchy via indescribability.
\newblock {\em J. Symb. Log.}, 85(2):773--808, 2020.

\bibitem{MR4230485}
Brent Cody, Victoria Gitman, and Chris Lambie-Hanson.
\newblock Forcing a {$\square(\kappa)$}-like principle to hold at a weakly
  compact cardinal.
\newblock {\em Ann. Pure Appl. Logic}, 172(7):Paper No. 102960, 26, 2021.

\bibitem{MR4594301}
Brent Cody and Peter Holy.
\newblock Ideal operators and higher indescribability.
\newblock {\em J. Symb. Log.}, 88(2):835--873, 2023.

\bibitem{CodyWhite}
Brent Cody and Philip White.
\newblock Two-cardinal ideal operators and indescribability.
\newblock preprint.

\bibitem{MR1077260}
Qi~Feng.
\newblock A hierarchy of {R}amsey cardinals.
\newblock {\em Ann. Pure Appl. Logic}, 49(3):257--277, 1990.

\bibitem{MR2768692}
Matthew Foreman.
\newblock Ideals and generic elementary embeddings.
\newblock In {\em Handbook of set theory. {V}ols. 1, 2, 3}, pages 885--1147.
  Springer, Dordrecht, 2010.

\bibitem{MR2768695}
Moti Gitik.
\newblock Prikry-type forcings.
\newblock In {\em Handbook of set theory. {V}ols. 1, 2, 3}, pages 1351--1447.
  Springer, Dordrecht, 2010.

\bibitem{MR3730566}
Yair Hayut and Chris Lambie-Hanson.
\newblock Simultaneous stationary reflection and square sequences.
\newblock {\em J. Math. Log.}, 17(2):1750010, 27, 2017.

\bibitem{MR0325397}
Thomas~J. Jech.
\newblock Some combinatorial problems concerning uncountable cardinals.
\newblock {\em Ann. Math. Logic}, 5:165--198, 1972/73.

\bibitem{MR0281606}
Azriel L\'{e}vy.
\newblock The sizes of the indescribable cardinals.
\newblock In {\em Axiomatic {S}et {T}heory ({P}roc. {S}ympos. {P}ure {M}ath.,
  {V}ol. {XIII}, {P}art {I}, {U}niv. {C}alifornia, {L}os {A}ngeles, {C}alif.,
  1967)}, volume XIII, Part I of {\em Proc. Sympos. Pure Math.}, pages
  205--218. Amer. Math. Soc., Providence, RI, 1971.

\bibitem{MR0954259}
Pierre Matet.
\newblock Un principe combinatoire en relation avec l'ultranormalit\'{e} des
  id\'{e}aux.
\newblock {\em C. R. Acad. Sci. Paris S\'{e}r. I Math.}, 307(2):61--62, 1988.

\bibitem{MR2204569}
Pierre Matet.
\newblock Covering for category and combinatorics on {$P_k(\lambda)$}.
\newblock {\em J. Math. Soc. Japan}, 58(1):153--181, 2006.

\bibitem{MR0357121}
Telis~K. Menas.
\newblock On strong compactness and supercompactness.
\newblock {\em Ann. Math. Logic}, 7:327--359, 1974/75.

\bibitem{Sakai}
Hiroshi Sakai.
\newblock On generalized notion of higher stationarity.
\newblock \url{https://www.ub.edu/RSTR2018/slides/Sakai.pdf}.

\bibitem{MR2817562}
I.~Sharpe and P.~D. Welch.
\newblock Greatly {E}rd{\H{o}}s cardinals with some generalizations to the
  {C}hang and {R}amsey properties.
\newblock {\em Ann. Pure Appl. Logic}, 162(11):863--902, 2011.

\bibitem{torres_ms_thesis}
Catalina Torres.
\newblock {\em Stationary reflection on $P_\kappa\lambda$}.
\newblock Master's Thesis. Universitat de Barcelona, Barcelona, Spain
  (\url{http://hdl.handle.net/2445/189745}), 2022.

\bibitem{MR2440418}
Toshimichi Usuba.
\newblock Ineffability of {$P_\kappa\lambda$} for {$\lambda$} with small
  cofinality.
\newblock {\em J. Math. Soc. Japan}, 60(3):935--954, 2008.

\end{thebibliography}

\end{document}